\begin{document}

\newtheorem{theorem}{Theorem}[section]
\newtheorem{corollary}[theorem]{Corollary}
\newtheorem{definition}[theorem]{Definition}
\newtheorem{conjecture}[theorem]{Conjecture}
\newtheorem{question}[theorem]{Question}
\newtheorem{problem}[theorem]{Problem}
\newtheorem{lemma}[theorem]{Lemma}
\newtheorem{proposition}[theorem]{Proposition}
\newtheorem{example}[theorem]{Example}
\newenvironment{proof}{\noindent {\bf
Proof.}}{\rule{3mm}{3mm}\par\medskip}
\newcommand{\remark}{\medskip\par\noindent {\bf Remark.~~}}
\newcommand{\pp}{{\it p.}}
\newcommand{\de}{\em}

\newcommand{\JEC}{{\it Europ. J. Combinatorics},  }
\newcommand{\JCTB}{{\it J. Combin. Theory Ser. B.}, }
\newcommand{\JCT}{{\it J. Combin. Theory}, }
\newcommand{\JGT}{{\it J. Graph Theory}, }
\newcommand{\ComHung}{{\it Combinatorica}, }
\newcommand{\DM}{{\it Discrete Math.}, }
\newcommand{\ARS}{{\it Ars Combin.}, }
\newcommand{\SIAMDM}{{\it SIAM J. Discrete Math.}, }
\newcommand{\SIAMADM}{{\it SIAM J. Algebraic Discrete Methods}, }
\newcommand{\SIAMC}{{\it SIAM J. Comput.}, }
\newcommand{\ConAMS}{{\it Contemp. Math. AMS}, }
\newcommand{\TransAMS}{{\it Trans. Amer. Math. Soc.}, }
\newcommand{\AnDM}{{\it Ann. Discrete Math.}, }
\newcommand{\NBS}{{\it J. Res. Nat. Bur. Standards} {\rm B}, }
\newcommand{\ConNum}{{\it Congr. Numer.}, }
\newcommand{\CJM}{{\it Canad. J. Math.}, }
\newcommand{\JLMS}{{\it J. London Math. Soc.}, }
\newcommand{\PLMS}{{\it Proc. London Math. Soc.}, }
\newcommand{\PAMS}{{\it Proc. Amer. Math. Soc.}, }
\newcommand{\JCMCC}{{\it J. Combin. Math. Combin. Comput.}, }
\newcommand{\GC}{{\it Graphs Combin.}, }

\title{ Laplacian Coefficient, Matching Polynomial  and Incidence Energy of  of Trees with Described Maximum Degree
\thanks{
This work is supported by National Natural Science Foundation of
China (Nos. 11271256 and  11531001), The Joint Israel-China Program (No.11561141001), Innovation Program of Shanghai Municipal Education Commission (No.14ZZ016), The Ph.D. Programs Foundation of Ministry of Education of China (No.20130073110075).
}
}
\author{
Ya-Lei Jin$^{a}$, Yeong-Nan Yeh$^{b}$,  Xiao-Dong Zhang$^{a}$\thanks{Corresponding  author ({\it E-mail address:}
%mayeh@math.sinica.edu.tw
xiaodong@sjtu.edu.cn)
}
%Shanghai Key Lab in Mathematical Modeling, Analysis, and Computation for Multi-Physical Processes
%   ¼òд£ºSHL-MAC
\\
{\small $^a$Department of Mathematics, MOE-LSC, and SHL-MAC}\\
{\small Shanghai Jiao Tong University} \\
{\small  800 Dongchuan road, Shanghai, 200240,  China}\\
{\small $^b$Institute of Mathematics,}\\
{\small  Academia Sinica, Taibei 11529, Taiwan,}\\
 }
\date{}
\maketitle
 \begin{abstract}
 Let $\mathcal{L}(T,\lambda)=\sum_{k=0}^n
(-1)^{k}c_{k}(T)\lambda^{n-k}$ be the characteristic polynomial of its Laplacian
matrix of a tree $T$. This paper studied some properties  of the generating function of the coefficients sequence $(c_0, \cdots, c_n)$  which are related with  the matching polynomials of division tree of $T$. These results, in turn, are used to characterize all extremal trees having the minimum Laplacian coefficient
generation function and
the minimum incidence energy of trees with described maximum degree, respectively.

 \end{abstract}

{{\bf Key words:}  Laplacian coefficient;  matching polynomial; incidence energy; tree; subdivision tree.
 }

     {{\bf AMS Classifications:} 05C25, 05C50}
\vskip 0.5cm

\section{Introduction}
Let $G=(V(G), E(G))$ be a simple  graph with vertex set $V(G)=\{v_1, \cdots, v_n\}$ and edge set $E(G)$. Let $A(G)=(a_{ij})$  and $D(G)=(d(v_1), \cdots, d(v_n))$ be
its adjacency and degree diagonal matrices, respectively. Then the {\it  Laplacian matrix } of $G$ is defined to be $L(G)=D(G)-A(G)$. {\it The Laplacian polynomial} $\mathcal{L}(G,\lambda)$ of $G$ is the characteristic
polynomial of its Laplacian matrix $L(G)$, i.e.,
\begin{equation}\mathcal{L}(G,\lambda)=det(\lambda I_{n}-L(G))=\sum_{k=0}^n
(-1)^{k}c_{k}(G)\lambda^{n-k}.\end{equation}
It is well known that
$c_{0}(G)=1,c_{n}(G)=0,c_{1}(G)=2|E(G)|$ and $ c_{n-1}=n\tau(G)$, where $\tau(G)$ is the number of the spanning trees.
In addition,

\begin{equation}\varphi(T,x)=c_0+c_1x+\cdots c_{n-1}x^{n-1}\end{equation}
 is called the {\it Laplacian coefficient generation function} of $T$.
  Mohar \cite{Mohar2007}  proposed a new notation of  poset consisting of all trees with Laplacian coefficients.  Let $(\mathcal{T}_n, \preceq)$  be a poset consisting of all trees of order $n$  with $\preceq$, where   $T_1\preceq T_2$, if $(c_0(T_1), \cdots, c_{n-1}(T_1))\le
(c_0(T_2), \cdots, c_{n-1}(T_2))$, i.e.,  $c_i(T_1)\le c_i(T_2)$ for $i=0, \ldots, n-1$. Moreover, write $T_1\prec T_2$ if $T_1\preceq T_2$ and there exists a $k$ with $c_k(T_1)<c_k(T_2)$. Further, he established the monotone relations under two graph operations,  which presents a strengthening of Zhou and Gutman's result \cite{zhou 2008} that  $(\mathcal{T}_n, \preceq)$ has a unique maximal element the path $P_n$ and a unique minimal element the star $K_{1, n-1}$. Besides, he  \cite{Mohar2007} also proposed some problems on how to order trees with the Laplacian coefficients.  In addition, Il\'{i}c\cite{Ilic2010} determined the extremal tree which
has minimal Laplacian coefficients in all $n-$vertex trees with a fixed matching number. Stevanov\'{i}c and Il\'{i}c \cite{Stevanovic2009b} characterized the minimum and maximum elements in the poset of unicyclic graphs of order $n$ with $\preceq$.
Tan \cite{Tan2011} proved that the poset of unicyclic graphs of order $n$ and fixed
matching number with $\preceq$ has only one minimal element. The study on the Laplacian coefficients has attracted more and more attention.  The readers are referred to %\cite{He2010},
 \cite{heuberger2008}, \cite{heuberger2009a},
 %\cite{Ilic2009},\cite{Ilic2009b},
  \cite{Zhang2009} and references therein.

Let $I(G)$ be  the vertex-edge
incidence matrix, i.e., an $(n\times m)$-matrix whose $(i, j)$-entry is 1 if the vertex
$v_i$ is incident to the edge $e_j$ , and  0 otherwise. Then {\it incidence
energy $IE(G)$ } (see  \cite{nikiforov2007},  \cite{gutman2009} or \cite{jooyandeh2009}) of $G$ is defined to be the sum of the singular values of $I(G)$, i.e., the sum of the square roots of all eigenvalues of  $I(G)I(G)^T$.
 On the other hand, the extremal trees with the  minimal Wiener index of trees with  maximum degree $\Delta$ has attracted considerable attention. Liu et al. \cite{liu2000}, Fischermann et
al. \cite{fischermann2002} and Jelen et al. \cite{jelen2003}
independently determined all trees which have the minimum Wiener
indices  among all trees of order $n$ and maximum degree $\Delta$ by different approaches.
 Kirk and Wang \cite{kirk2008} studied the number of subtrees of a tree with  the maximum degree. Zhang \cite{zhang2008}  characterized the extremal tree with the maximum  Laplacian spectral radius among all trees of order $n$ with the maximum degree.  These results  motivate  us to consider the following problem in this
paper.
\begin{problem}
Characterize all minimal elements in the poset $(\mathcal{T}_{n, d+1}, \preceq)$, where $\mathcal{T}_{n, d+1}$ is the set of all trees with the  maximum degree $d+1$.
\end{problem}

In order to analyze this problem, some more notations are introduced.  {\it A rooted $d-$ary tree} is a rooted tree of which every vertex has $0$ or $d$ children. The (rooted) complete $d-$ary tree of height $h-1$, denoted by $C_h$, is a rooted $d-$ary tree such that the height of each pendent vertex  is $h-1$. Then $C_1$ consists of a single vertex and the root of $C_h$ has $d$ branches which are $C_{h-1}$. Moreover, the degree of the root in rooted complete $d-$ary tree $C_h$ is $d$.
\begin{definition}
A  rooted tree $T$ with the root $v_0$ and the maximum degree $d+1$ is called {\it $(d+1)$-greedy tree,} denoted by $T_{d+1}^*$,  if the following properties have been satisfied:

(1) the degree of $v_0$  is $d+1$, i.e., $deg(v_0)=d+1$.

(2) The height of any two pendent vertices of $T$ differs by at most 1, where the height of a vertex $v$ in $T$ is equal to the distance between $v$ and $v_0$.

(3) For any  vertex $v$ in $T,$  there is at most one $T(u)$ is incomplete $d-$ary tree, where $u$ is the children of $v$ and $T(u)$ is the rooted subtree of $T$ that is induced by $u$ and all of its successors in $T$, the root of which is $u$.
\end{definition}

Let $T=(V(T), E(T))$ be a tree with $V(T)=\{v_1, \cdots, v_n\}$ and $E(T)=\{e_1, \cdots, e_{n-1}\}$. The {\it subdivision tree } of $T$ is defined to be a tree $S(T)=(V(S(T)), E(S(T)))$ with vertex set $V(S(T))=V(T)\bigcup E(T)$,
and $v_i$ and $e_j$ are adjacency in $S(T)$ if and only if $v_i$ is incidence with $e_j$ in $T$. In other words, $S(T)$ is the tree obtained from $T$ by inserting a new vertex in each edge in $T$.

The main results of this paper can be stated as follows.
\begin{theorem}\label{main1}
$T_{d+1}^*$ is the unique tree  with the minimum  Laplacian coefficient generation function in  $\mathcal{T}_{n, d+1}$, i.e. for any tree $T\in \mathcal{T}_{n, d+1}$ and $x>0$,
$$\varphi(T_{d+1}^*,x)\le \varphi(T,x)$$
with equality if and only if $T=T_{d+1}^*$.
\end{theorem}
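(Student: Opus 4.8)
The plan is to translate the statement about Laplacian coefficients into a statement about matchings of the subdivision tree, and then to identify $T_{d+1}^{*}$ as the unique minimizer of the matching numbers by a greedy exchange argument. The starting point is the identity $c_{k}(T)=m(S(T),k)$, where $m(H,k)$ denotes the number of $k$-matchings of $H$; this is presumably the content of the matching-polynomial results announced in the abstract. To see why it should hold, write the adjacency matrix of the subdivision in the block form $A(S(T))=\left(\begin{smallmatrix}0 & I(T)\\ I(T)^{T} & 0\end{smallmatrix}\right)$, where $I(T)$ is the incidence matrix. Since $S(T)$ is again a tree, its adjacency characteristic polynomial equals its matching polynomial; expanding the block determinant and using $I(T)I(T)^{T}=Q(T)$ together with the fact that for a tree the signless Laplacian $Q(T)$ and the Laplacian $L(T)$ are cospectral, one extracts exactly $c_{k}(T)=m(S(T),k)$. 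Consequently $\varphi(T,x)=\sum_{k=0}^{n-1}m(S(T),k)\,x^{k}$, a polynomial with nonnegative coefficients. For $x>0$ this makes the comparison $\varphi(T_{d+1}^{*},x)\le\varphi(T,x)$ equivalent to the coefficientwise (poset $\preceq$) comparison of the matching sequences, so it suffices to prove that $S(T_{d+1}^{*})$ has no more $k$-matchings than $S(T)$ for every $k$, with strict inequality in at least one $k$ unless $T=T_{d+1}^{*}$.

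Next I would set up a rooted framework suited to this matching comparison. Every $T\in\mathcal{T}_{n,d+1}$ has a vertex of degree $d+1$; root $T$ there, matching the root of $T_{d+1}^{*}$. For a rooted tree $R$ I would track two generating functions built from $S(R)$: $\Phi(R,x)=\sum_{k}m(S(R),k)\,x^{k}$ counting all matchings, and $\Psi(R,x)$ counting those matchings that leave the root $r$ uncovered. When branches $R_{1},\dots,R_{t}$ are attached at a common root, the inserted degree-two vertex on each root edge decouples the branches enough that $\Phi$ and $\Psi$ of the whole tree are expressible through products of the $\Phi(R_{i},x)$ and $\Psi(R_{i},x)$. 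These product recursions are the tool for comparing two trees that differ only by a local rearrangement of branches.

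The heart of the proof is a local exchange lemma asserting that making the tree bushier and more balanced never increases any matching number $m(S(\cdot),k)$. I would prove it in two moves, each preserving $n$ and the bound $\Delta=d+1$. A balancing move takes two sibling branches whose sizes or heights differ by more than the greedy prescription allows and transfers a leaf or a minimal subtree from the larger to the smaller; using the recursions above, one shows each coefficient of $\Phi$ does not increase. A compaction move replaces any branch that fails to be the complete $d$-ary tree $C_{h}$ permitted by its size with the corresponding complete greedy $d$-ary branch, again coefficientwise non-increasing. Properties (1)--(3) of Definition 1.2 are exactly the configurations fixed by both moves, so iterating them drives an arbitrary $T$ to $T_{d+1}^{*}$; since every intermediate step keeps the matching sequence coefficientwise non-increasing and is strict at some $k$ whenever the tree is not yet greedy, we obtain $\varphi(T_{d+1}^{*},x)\le\varphi(T,x)$ for all $x>0$ with equality if and only if $T=T_{d+1}^{*}$.

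The main obstacle is the exchange lemma itself. Unlike independence polynomials, matching generating functions of subdivisions do not factor cleanly, because the root couples to each branch through the inserted degree-two vertex, so both the root-covered and root-free series enter the recursion simultaneously; proving that a single move decreases \emph{every} coefficient (a majorization statement) rather than merely the value at one point requires showing that certain polynomial differences $\Phi(T,x)-\Phi(T',x)$ have nonnegative coefficients, which I expect to establish through a combinatorial injection between matchings before and after the move, or, failing a clean injection, through a sign analysis of the recursion. Pinning down the equality case---in particular that condition (3), ``at most one incomplete branch,'' forces uniqueness of the minimizer---is the second delicate point, since one must rule out distinct trees with identical matching sequences.
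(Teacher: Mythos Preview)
Your reduction to matchings of the subdivision via $c_k(T)=m(S(T),k)$ is exactly right, and your pair $(\Phi,\Psi)$ matches the paper's $(M(S(T),x),M_0(S(T),x))$. But the next sentence contains a genuine logical slip: the pointwise inequality $\varphi(T_{d+1}^{*},x)\le\varphi(T,x)$ for all $x>0$ is \emph{not} equivalent to the coefficientwise inequality $c_k(T_{d+1}^{*})\le c_k(T)$ for all $k$. Coefficientwise implies pointwise, but not conversely; two polynomials with nonnegative coefficients can satisfy $P(x)\le Q(x)$ on $(0,\infty)$ without $P$ being coefficientwise below $Q$. So ``it suffices'' is true but ``equivalent'' is false, and more importantly you have committed yourself to proving the stronger coefficientwise statement. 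That stronger statement is precisely what the paper leaves as an open conjecture at the end; the theorem you are asked to prove is only the pointwise one.

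This matters because your proposed exchange lemma---that each balancing or compaction move is coefficientwise non-increasing---is the hard, unproven part. The paper sidesteps it entirely by working pointwise: for each fixed $x>0$ it introduces the ratio $\tau(S(R),x)=M_0(S(R),x)/M(S(R),x)$, proves a clean recursion $\tau(S(T),x)=\bigl(1+\sum_j x/(1+x\,\tau(S(T_j),x))\bigr)^{-1}$ for a root with branches $T_j$, and then establishes an exchange theorem comparing $M(S(T),x)$ and $M(S(T'),x)$ as real numbers after rearranging branches between two sites so that the $d$ smallest-$\tau$ branches are grouped together. The proof of that exchange is a short algebraic factorization of the difference, not an injection on matchings. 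Optimality of $T_{d+1}^{*}$ is then extracted from the structural constraints this exchange forces on any pointwise minimizer, using monotonicity of $\tau$ under taking subtrees and the explicit values $\tau(S(C_h),x)$. Your plan to find a bijection/injection or a sign analysis giving coefficientwise domination is aiming past the target and at something the authors themselves do not know how to do.
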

\begin{theorem}\label{main2}
$T_{d+1}^*$ is the unique tree with the minimum incidence energy in  $\mathcal{T}_{n, d+1}$, i.e.,  for any tree $T\in \mathcal{T}_{n, d+1}$ ,
$$IE(T_{d+1}^*)\le IE(T)$$
with equality if and only if $T=T_{d+1}^*$.
\end{theorem}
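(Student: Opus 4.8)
The plan is to reduce the statement to Theorem~\ref{main1} through a Coulson-type integral representation of the incidence energy, so that the extremal analysis is carried entirely by the already-established comparison of generating functions. First I would identify $IE(T)$ with the ordinary energy of the subdivision tree $S(T)$. Writing the adjacency matrix of $S(T)$ in the block form $\bigl(\begin{smallmatrix} 0 & I(T) \\ I(T)^{T} & 0\end{smallmatrix}\bigr)$, where $I(T)$ is the incidence matrix of $T$, one sees that the nonzero eigenvalues of $A(S(T))$ are exactly $\pm\sigma_{1},\dots,\pm\sigma_{n-1}$, with $\sigma_{1},\dots,\sigma_{n-1}$ the singular values of $I(T)$. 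Since $I(T)I(T)^{T}$ is the signless Laplacian of $T$, and $T$ is bipartite so that its signless Laplacian and Laplacian are cospectral, the numbers $\sigma_i^{2}$ are precisely the nonzero Laplacian eigenvalues $\mu_1,\dots,\mu_{n-1}$ of $T$. Hence
\[
IE(T)=\sum_{i=1}^{n-1}\sigma_i=\tfrac12\sum_{i=1}^{n-1}\bigl(\sigma_i+|-\sigma_i|\bigr)=\tfrac12\,E\bigl(S(T)\bigr),
\]
where $E(\cdot)$ is the graph energy. The same bookkeeping gives the polynomial identity $x\,M(S(T),x)=\mathcal{L}(T,x^{2})$, and comparing coefficients yields $c_k(T)=m(S(T),k)$ for every $k$; thus the Laplacian coefficients of $T$ are the matching numbers of $S(T)$, and $\sum_{k}c_k(T)x^{2k}=\varphi(T,x^{2})$.

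Next I would invoke the Coulson integral formula. Because $S(T)$ is a forest, its characteristic polynomial coincides with its matching polynomial, so its energy admits the representation
\[
E\bigl(S(T)\bigr)=\frac{2}{\pi}\int_{0}^{\infty}\frac{1}{x^{2}}\,\ln\!\Bigl[\sum_{k\ge 0} m(S(T),k)\,x^{2k}\Bigr]\,dx .
\]
Combining this with the two identities of the previous step produces the clean formula
\[
IE(T)=\frac{1}{\pi}\int_{0}^{\infty}\frac{1}{x^{2}}\,\ln\varphi(T,x^{2})\,dx ,
\]
the integral converging since $\varphi(T,x^{2})\to 1$ as $x\to 0$ (so the integrand tends to $c_1(T)$) and $\varphi(T,x^{2})$ grows polynomially as $x\to\infty$. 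The decisive feature is that the right-hand side is a strictly monotone functional of the generating function: the weight $x^{-2}$ is positive on $(0,\infty)$ and $\ln$ is strictly increasing.

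Finally I would compare. By Theorem~\ref{main1}, $\varphi(T_{d+1}^{*},x)\le\varphi(T,x)$ for every $x>0$ and every $T\in\mathcal{T}_{n,d+1}$, whence $\ln\varphi(T_{d+1}^{*},x^{2})\le\ln\varphi(T,x^{2})$ pointwise on $(0,\infty)$; integrating against $x^{-2}\,dx$ gives $IE(T_{d+1}^{*})\le IE(T)$. For the equality case, suppose $IE(T_{d+1}^{*})=IE(T)$. Then the nonnegative continuous integrand $x^{-2}\bigl(\ln\varphi(T,x^{2})-\ln\varphi(T_{d+1}^{*},x^{2})\bigr)$ has zero integral, hence vanishes identically, so $\varphi(T,x^{2})=\varphi(T_{d+1}^{*},x^{2})$ for all $x>0$ and the two generating polynomials coincide. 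By the uniqueness part of Theorem~\ref{main1} this forces $T=T_{d+1}^{*}$, which completes the argument.

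I expect the main obstacle to lie in the rigorous justification of the Coulson step rather than in the comparison: fixing the correct constant and branch of the logarithm, verifying convergence at both endpoints, and confirming that the integrand is genuinely a pointwise-monotone functional of $\varphi$. Once the representation $IE(T)=\tfrac{1}{\pi}\int_{0}^{\infty}x^{-2}\ln\varphi(T,x^{2})\,dx$ is secured, Theorem~\ref{main1} supplies everything else, so the technical heart of the proof is the spectral and analytic reduction of the first two paragraphs, not the extremal optimization itself.
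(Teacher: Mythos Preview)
Your argument is essentially identical to the paper's: establish $IE(T)=\tfrac12 E(S(T))$, apply the Coulson integral formula to $E(S(T))$, and then feed in the pointwise inequality $\varphi(T_{d+1}^*,x)\le\varphi(T,x)$ (equivalently $M(S(T_{d+1}^*),x)\le M(S(T),x)$, which is Theorem~\ref{matching} and how the paper phrases it). One slip: the identity you write as $x\,M(S(T),x)=\mathcal{L}(T,x^{2})$ is not correct as stated (the degrees do not match; the paper's version is $\lambda\,\mathcal{A}(S(T),\lambda)=\mathcal{L}(T,\lambda^{2})$), but the conclusion you actually use, $c_k(T)=m(S(T),k)$ and hence $\varphi(T,x)=M(S(T),x)$, is right and is exactly Lemma~\ref{zhougutman}.
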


  The approach to the proof of Theorem~\ref{main1} is different from some known technique, although the extremal trees for different graph variants such as the Wiener index, the Laplacian spectral radius, the number of subtrees among all trees of order $n$ and the maximum degree $\Delta$ are greedy trees. The The rest of this paper is organized as follows. In Section 2, some preliminary on the matching polynomials of tree are presented. In Section 3, the proof of Theorem~\ref{main1} is presented. In Section 4,  we give the proof of Theorem~\ref{main2} and propose a conjecture.

\section{Matching generating function}

 For a  tree $T$, let $m(T,k)$ be the number of matchings of $T$ containing precisely $k$ edges
(shortly $k-$matchings). It is convenient to define $m(T, 0)=1$. Then the {\it matching generating function} of $T$ is defined to

 \begin{equation}M(T,x)=\sum_{k\ge 0}m(T,k)x^k.\end{equation}
  If $T$ is a rooted tree, let $m_1(T,k)$ be the number of $k-$matchings of $T$ saturating the root and $m_0(T,k)$ be the number of $k-$matchings of $T$ not saturating the root.
 Denote by
\begin{equation}M_i(T,x)=\sum_{k\ge 0}m_i(T,k)x^k,~\mbox{for $i\in\{0,1\}$}.
\end{equation}
and
 \begin{equation} \tau(T,x)=\frac{M_0(T,x)}{M(T,x)}.\end{equation}
Clearly $M(T,x)=M_0(T,x)+M_1(T,x)$. It follows from \cite{heuberger2009b} that
\begin{lemma}\cite{heuberger2009b} Let $T$ be a rooted tree with root $v$ and the branches
$T_1,\cdots, T_k$. Then
\begin{equation}\label{HW1}
M_0(T,x)=\prod_{j=1}^kM(T_j,x),
\end{equation}
\begin{equation}\label{HW2}
M_1(T,x)=x\sum_{j=1}^k\frac{M_0(T_j,x)}{M(T_j,x)}\prod_{i=1}^kM(T_i),
\end{equation}

\begin{equation}\label{HW3}
\tau(T,x)=\frac{1}{1+x\sum_{j=1}^k\tau(T_j,x)}.
\end{equation}
\end{lemma}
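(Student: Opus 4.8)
The plan is to establish the two generating-function identities \eqref{HW1} and \eqref{HW2} by a direct combinatorial decomposition of the matchings of $T$ according to whether the root $v$ is saturated, and then to obtain \eqref{HW3} purely algebraically from these two together with the definitions of $M$ and $\tau$.

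First I would prove \eqref{HW1}. A matching of $T$ that does not saturate $v$ uses no edge incident to $v$, so all of its edges lie inside the branches $T_1,\dots,T_k$; conversely, since the branches are vertex-disjoint and every edge avoids $v$, an arbitrary choice of a matching in each branch assembles, with no further restriction, to a matching of $T$ avoiding $v$. Splitting by the number of edges contributed by each branch gives
$$m_0(T,r)=\sum_{r_1+\cdots+r_k=r}\prod_{j=1}^k m(T_j,r_j),$$
and the generating function of such a convolution is exactly the product $\prod_{j=1}^k M(T_j,x)$, which is \eqref{HW1}.

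Next I would prove \eqref{HW2}. In a matching that saturates $v$, the vertex $v$ meets exactly one matching edge, which must join $v$ to the root $v_j$ of a unique branch $T_j$; this edge contributes the factor $x$. Once it is chosen, $v_j$ is already covered, so the part of the matching inside $T_j$ must be a matching of $T_j$ not saturating its own root $v_j$, contributing $M_0(T_j,x)$, while the parts in the remaining branches are unrestricted, contributing $\prod_{i\neq j}M(T_i,x)$. Summing over $j$ yields $M_1(T,x)=x\sum_{j=1}^k M_0(T_j,x)\prod_{i\neq j}M(T_i,x)$, and rewriting $\prod_{i\neq j}M(T_i,x)=\big(\prod_{i=1}^k M(T_i,x)\big)/M(T_j,x)$ gives \eqref{HW2}.

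Finally, \eqref{HW3} follows by an easy manipulation: from $M=M_0+M_1$ we have $\tau(T,x)=M_0/(M_0+M_1)=1/(1+M_1/M_0)$, and dividing \eqref{HW2} by \eqref{HW1} gives $M_1/M_0=x\sum_{j=1}^k M_0(T_j,x)/M(T_j,x)=x\sum_{j=1}^k\tau(T_j,x)$, so $\tau(T,x)=1/(1+x\sum_{j=1}^k\tau(T_j,x))$. The only point requiring care is the combinatorial bookkeeping in the two decompositions: one must verify that a saturating matching meets $v$ in exactly one edge, so that the cases indexed by $j$ are disjoint and exhaustive, and that the branch receiving that edge is forced into a matching non-saturating at its own root. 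Once these bijections are checked the products of generating functions follow automatically, and the derivation of \eqref{HW3} is then entirely formal.
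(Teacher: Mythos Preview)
Your argument is correct and is the standard combinatorial derivation of these identities. Note, however, that the paper does not actually give its own proof of this lemma: it is quoted from \cite{heuberger2009b} and stated without proof, so there is nothing to compare against beyond observing that your decomposition is precisely the one underlying the cited result.
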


\begin{lemma}\label{identify}Let $T_1$ and $ T_2$ be two vertex disjoint trees with roots $u$ and $v$, respectively. If $T$  is the tree obtained from $T_1$ and $T_2$ by identifying $u$ and $v$, then $$M(T,x)=M(T_1,x)M_0(T_2,x)+M_0(T_1,x)M_1(T_2,x).$$
\end{lemma}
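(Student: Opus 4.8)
The plan is to prove the identity by a direct combinatorial decomposition of the matchings of $T$ according to the status of the identified root. Write $w$ for the common vertex of $T$ obtained by identifying $u$ and $v$. The key structural observation is that the branches of $T$ hanging at $w$ are exactly the branches of $T_1$ hanging at $u$ together with the branches of $T_2$ hanging at $v$, and these two families of branches are vertex-disjoint. Recalling that a matching of a rooted tree avoids its root precisely when it is a matching of the forest of branches, equation~(\ref{HW1}) applied to $T$ already gives $M_0(T,x)=M_0(T_1,x)M_0(T_2,x)$, and this multiplicativity of the matching generating function over vertex-disjoint pieces is the engine of the whole argument.

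First I would partition the matchings of $T$ into three classes: those not saturating $w$; those saturating $w$ by an edge entering a branch inherited from $T_1$; and those saturating $w$ by an edge entering a branch inherited from $T_2$. These classes are mutually exclusive and exhaustive, because $w$ is incident to at most one matching edge, and that edge, when present, lies on exactly one of the two sides. I would then evaluate each class as a product of generating functions, using that the $x$-weight of a matching is the product of the weights of its restrictions to disjoint components. The unsaturated class restricts to a matching of the branches of $T_1$ and a matching of the branches of $T_2$, contributing $M_0(T_1,x)M_0(T_2,x)$. The class saturating through $T_1$ restricts to a matching of $T_1$ saturating $u$ together with a matching of the branches of $T_2$, contributing $M_1(T_1,x)M_0(T_2,x)$; symmetrically, the class saturating through $T_2$ contributes $M_0(T_1,x)M_1(T_2,x)$.

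Summing the three contributions and collecting the common factor $M_0(T_2,x)$ from the first two classes yields
$$M(T,x)=\bigl(M_0(T_1,x)+M_1(T_1,x)\bigr)M_0(T_2,x)+M_0(T_1,x)M_1(T_2,x),$$
and the stated formula then follows at once from $M(T_1,x)=M_0(T_1,x)+M_1(T_1,x)$.

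I expect no serious obstacle here; the argument is essentially bookkeeping. The one point demanding care is the asymmetric grouping in the final identity: the matchings unsaturated at $w$ and the matchings saturated through the $T_1$-side together range over all matchings of $T_1$ while always leaving the root of the $T_2$-side unmatched, which is what produces the single term $M(T_1,x)M_0(T_2,x)$, whereas only the third class engages the $T_2$-side root. One must therefore resist symmetrizing the roles of $T_1$ and $T_2$ prematurely. An alternative, purely algebraic route would apply the branch formulas (\ref{HW1}) and (\ref{HW2}) directly to the branch list of $T$ at $w$ and regroup the sum in (\ref{HW2}) according to whether each index indexes a $T_1$-branch or a $T_2$-branch; this reproduces the same three terms but is less transparent than the combinatorial decomposition above.
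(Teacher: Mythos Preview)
Your proof is correct and follows essentially the same approach as the paper. The paper states the coefficient identity $m(T,k)=\sum_{i=0}^k\bigl(m(T_1,i)m_0(T_2,k-i)+m_0(T_1,i)m_1(T_2,k-i)\bigr)$ and then passes to generating functions; this coefficient identity is exactly your three-class decomposition with classes~1 and~2 already merged, so the only difference is that you work at the level of generating functions from the start and spell out the merging step explicitly.
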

\begin{proof}
It is easy to see that $$m(T,k)=\sum_{i=0}^k\left(m(T_1,i)m_0(T_2,k-i)+m_0(T_1,i)m_1(T_2,k-i)\right).$$
Hence
\begin{eqnarray*}
M(T,x)
%&=&\sum_{k\ge 0}m(T,k)x^k\\
&=&\sum_{k\ge 0}\sum_{i=0}^k(m(T_1,i)m_0(T_2,k-i)+m_0(T_1,i)m_1(T_2,k-i))x^k\\
&=&\sum_{k\ge 0}\sum_{i=0}^k(m(T_1,i)m_0(T_2,k-i)x^k)+\sum_{k\ge 0}\sum_{i=0}^k(m_0(T_1,i)m_1(T_2,k-i)x^k)\\
&=&M(T_1,x)M_0(T_2,x)+M_0(T_1,x)M_1(T_2,x).
\end{eqnarray*}
This completes the proof.
\end{proof}

\begin{lemma}\label{subdivision-rec}
 Let $T$ be a rooted tree with root $v$ and the branches
$T_1,\cdots, T_k$. If $S(T)$, $S(T_1), \cdots, S(T_k)$ are the subdivision trees of $T, T_1, \cdots, T_k$, respectively, then
\begin{equation}\label{LM-z0}
M_0(S(T),x)=\prod_{j=1}^k(xM_0(S(T_j),x)+M(S(T_j),x)),
\end{equation}
\begin{equation}\label{LM-z1}
M_1(S(T),x)=x\sum_{j=1}^k\frac{M(S(T_j),x)}{xM_0(S(T_j),x)+M(S(T_j),x) }M_0(S(T),x),
%\prod_{i=1}^k(xM_0(S(T_i),x)+M(S(T_i),x)),
\end{equation}
\begin{equation}\label{LM-tau}
\tau(S(T),x)=\frac{1}{1+\sum_{j=1}^k\frac{x}{1+x\tau(S(T_j),x)}}.
\end{equation}
\end{lemma}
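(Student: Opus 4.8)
The plan is to reduce everything to the recurrences \eqref{HW1}--\eqref{HW3} of the preceding Lemma by exhibiting $S(T)$, rooted at $v$, as a tree whose branches are explicit modifications of the $S(T_j)$. First I would record the structural observation that drives the whole argument: subdividing the edge joining $v$ to the root $r_j$ of $T_j$ replaces it by a path $v$--$w_j$--$r_j$, where $w_j$ is the inserted vertex, while the edges internal to $T_j$ subdivide exactly into $S(T_j)$. Consequently $S(T)$ with root $v$ has exactly $k$ branches $B_1,\dots,B_k$, and $B_j$ is obtained from $S(T_j)$ by attaching the new pendant vertex $w_j$ to the root $r_j$ and declaring $w_j$ to be the root of $B_j$.

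The key computation is then to express the three matching quantities of each $B_j$ in terms of those of $S(T_j)$. Since the only edge at $w_j$ is $w_jr_j$, a matching of $B_j$ either omits this edge---in which case it is an arbitrary matching of $S(T_j)$---or uses it, which saturates $r_j$ and forces the remaining edges to form a matching of $S(T_j)$ not saturating $r_j$. This yields
\begin{equation*}
M_0(B_j,x)=M(S(T_j),x),\qquad M_1(B_j,x)=x\,M_0(S(T_j),x),
\end{equation*}
so that $M(B_j,x)=x\,M_0(S(T_j),x)+M(S(T_j),x)$. The point worth stating carefully is exactly this forcing step: saturating $w_j$ commits the edge $w_jr_j$ and therefore prevents $r_j$ from being matched inside $S(T_j)$, which is what produces the factor $M_0(S(T_j),x)$ rather than $M(S(T_j),x)$.

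With these identities in hand the three conclusions follow by direct substitution. Applying \eqref{HW1} to $S(T)$ with branches $B_1,\dots,B_k$ gives $M_0(S(T),x)=\prod_{j=1}^k M(B_j,x)$, which is precisely \eqref{LM-z0}. Applying \eqref{HW2}, using the ratio $M_0(B_j,x)/M(B_j,x)$ together with $\prod_{i=1}^k M(B_i,x)=M_0(S(T),x)$, yields \eqref{LM-z1}. Finally \eqref{HW3} gives $\tau(S(T),x)=1/\bigl(1+x\sum_{j=1}^k\tau(B_j,x)\bigr)$; simplifying $\tau(B_j,x)=M_0(B_j,x)/M(B_j,x)$ by dividing numerator and denominator by $M(S(T_j),x)$ turns it into $1/\bigl(1+x\,\tau(S(T_j),x)\bigr)$, and \eqref{LM-tau} drops out.

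I expect no serious obstacle: once the branch decomposition of $S(T)$ and the two-case count for $B_j$ are set up correctly, the remainder is mechanical substitution into the previous Lemma. The only places demanding care are the bookkeeping in $M_1(B_j,x)$ and the observation that the inserted vertices $w_1,\dots,w_k$ do not interact---each lies on its own branch, so $B_1,\dots,B_k$ are genuinely vertex-disjoint away from $v$ and the hypotheses of \eqref{HW1}--\eqref{HW3} apply verbatim.
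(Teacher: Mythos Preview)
Your proposal is correct and follows essentially the same approach as the paper: both identify the branches of $S(T)$ at $v$ as the trees $S(T_j)$ with a new pendant root $w_j$ adjoined (the paper writes these as $S(T_j)+v_ju_j$), compute their matching data as $M_0=M(S(T_j),x)$ and $M=xM_0(S(T_j),x)+M(S(T_j),x)$, and then feed this directly into \eqref{HW1}--\eqref{HW3}. Your write-up is in fact more explicit about the two-case count for $B_j$ than the paper's, which simply asserts the resulting formulas.
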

\begin{proof}
Let $v$, $v_1, \ldots, v_k$ be the roots of $T, T_1, \ldots, T_k$, respectively and let  $u_1, \cdots, u_k$  be new vertices in the edges $vv_1, \cdots, vv_k$ in $S(T)$, respectively.
By (\ref{HW1}) and (\ref{HW2}),
$$M_0(S(T),x)=\prod_{j=1}^kM(S(T_j)+v_ju_j,x)=\prod_{j=1}^k\left[xM_0(S(T_j),x)+M(S(T_j),x)
\right].$$
On the other hand, by (\ref{HW1}) and (\ref{HW2}),
\begin{eqnarray*}
M_1(S(T),x)&=&x\sum_{j=1}^k\frac{M_0(S(T_j)+v_ju_j,x)}{M(S(T_j)+v_ju_j,x)) }\prod_{i=1}^kM(S(T_i)+v_ju_j,x)\\
&=& x\sum_{j=1}^k\frac{M(S(T_j),x)}{xM_0(S(T_j),x)+M(S(T_j),x) }M_0(S(T),x)
%\prod_{i=1}^k\left[xM_0(S(T_i),x)+M(S(T_i),x)\right]
\end{eqnarray*}
Hence it follows from (\ref{LM-z0}) and (\ref{LM-z1}) that
$$\tau(S(T),x)=\frac{M_0(S(T),x)}{M(S(T),x)}=\frac{1}{1+\sum_{j=1}^k\frac{x}{1+x\tau(S(T_j),x)}}.$$
Hence the assertions hold.\end{proof}

\begin{lemma}\label{subgraph}
Let $T$ be a tree with root $v$ and the branches $T_1,...,T_k$. If $T'$ is a proper subtree of $T$ with root $v$ and the branches $T_1',...,T_r'$, then $\tau(S(T'),x)>\tau(S(T),x)$.
\end{lemma}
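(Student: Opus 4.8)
The plan is to work entirely from the recursion (\ref{LM-tau}) in Lemma~\ref{subdivision-rec} and to induct on the number of vertices of $T$. Fix $x>0$ throughout and abbreviate $g(T)=\tau(S(T),x)$. Writing $f(t)=\frac{x}{1+xt}$, the recursion reads
$$g(T)=\frac{1}{1+\sum_{j=1}^k f(g(T_j))}.$$
The two elementary facts I would record first are that $g(T)\in(0,1]$ for every rooted tree (since $M(S(T),x)\ge M_0(S(T),x)\ge 1$ when $x>0$), and that $f$ is \emph{strictly decreasing} and strictly positive on $[0,\infty)$ because $f'(t)=-x^2/(1+xt)^2<0$ and $x>0$. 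Thus a larger value of $g$ on a branch produces a strictly smaller summand $f(g)$, and every branch contributes a strictly positive amount to the denominator sum.

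Next I would set up the correspondence between the branches of $T'$ and those of $T$. Since $T'$ is a subtree of $T$ containing the common root $v$, every child of $v$ in $T'$ is also a child of $v$ in $T$, and the branch of $T'$ rooted at such a child is a subtree (possibly proper, possibly equal) of the corresponding branch of $T$. After relabelling I may therefore assume the branches of $T'$ are $T_1',\cdots,T_r'$ with $r\le k$, where each $T_i'$ is a subtree of $T_i$ with the same root for $i=1,\cdots,r$, while the branches $T_{r+1},\cdots,T_k$ of $T$ are deleted altogether in $T'$. Because $T'$ is a \emph{proper} subtree, at least one of the following holds: $r<k$ (some branch is deleted), or some $T_i'\subsetneq T_i$ is proper.

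Now I would compare the two denominators termwise. For each retained index $j\in\{1,\cdots,r\}$: if $T_j'=T_j$ then $f(g(T_j'))=f(g(T_j))$; and if $T_j'\subsetneq T_j$ is proper then, since $T_j$ has strictly fewer vertices than $T$, the inductive hypothesis gives $g(T_j')>g(T_j)$, and the monotonicity of $f$ yields $f(g(T_j'))<f(g(T_j))$. Hence $\sum_{j=1}^r f(g(T_j'))\le\sum_{j=1}^r f(g(T_j))$. For the deleted indices $j\in\{r+1,\cdots,k\}$, each term $f(g(T_j))>0$ is dropped, so $\sum_{j=1}^r f(g(T_j))\le\sum_{j=1}^k f(g(T_j))$. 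Combining these and invoking the properness of $T'$ to force at least one strict inequality (a proper shrinking of some $T_i'$ makes the first strict; a deleted branch makes the second strict, using $f>0$), I obtain
$$\sum_{j=1}^r f(g(T_j'))<\sum_{j=1}^k f(g(T_j)).$$
Feeding this into the recursion, the denominator $1+\sum_{j=1}^r f(g(T_j'))$ is strictly smaller than $1+\sum_{j=1}^k f(g(T_j))$, whence $g(T')>g(T)$, which is the claim. The induction bottoms out at a single-vertex root, which admits no proper rooted subtree, so the base case is vacuous.

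I expect the only genuinely delicate point to be the bookkeeping in the second paragraph: justifying that the branches of a proper rooted subtree really do embed as subtrees of the corresponding branches of $T$, and that ``properness of $T'$'' translates into at least one strict inequality of the correct type (deletion versus shrinking). The analytic content—monotonicity and positivity of $f$—is immediate, and the inductive combination is routine once the branch correspondence is pinned down.
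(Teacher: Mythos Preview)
Your proposal is correct and follows essentially the same approach as the paper: both arguments induct, invoke the recursion~(\ref{LM-tau}), set up the correspondence between branches of $T'$ and of $T$, and compare the sums $\sum \frac{x}{1+x\tau(S(T_j'),x)}$ and $\sum \frac{x}{1+x\tau(S(T_j),x)}$ term by term, with properness forcing a strict inequality. The only cosmetic differences are that the paper inducts on $|V(T')|$ (with base case $T'$ a single vertex, where $\tau(S(T'),x)=1$) rather than on $|V(T)|$, and is terser about the monotonicity and positivity facts that you spell out explicitly via $f(t)=x/(1+xt)$.
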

\begin{proof}
Denote $|V(T')|=n'$.  If $n'=1$, then  $\tau(S(T'),x)=1$ and the assertion holds immediately. Assume that the assertion holds for $n'<t$. Since $T'$ is a proper subgraph of $T$ with root $v$ and the branches $T_1',...,T_r'$, it is easy to see that, without loss of generality, $T_i',~1\le i\le r$, are subtrees of $T_1,...,T_r$, respectively. Moreover, there is at least one $i$ such that $T_i'$ is a proper subtree of $T_i$ or $k>r$.  By $(\ref{HW3})$, we have
\begin{eqnarray*}
\tau(S(T),x)=\frac{1}{1+\sum_{i=1}^k\frac{x}{1+x\tau(S(T_i),x)}}, \ \
\tau(S(T'),x)=\frac{1}{1+\sum_{i=1}^r\frac{x}{1+x\tau(S(T_i'),x)}}.
\end{eqnarray*}
By the induction hypothesis, $\tau(S(T_i'),x)\ge \tau(S(T_i),x),~1\le i\le r$ with at least one  strict inequality  or $k>r$.
Hence, $\tau(S(T'),x)>\tau(S(T),x)$.
\end{proof}
Now we are ready to prove the following exchange theorem which plays a key role in this paper.
\begin{theorem}\label{exchange0}
Let $T_1$ be a  tree with root $v_1$ and  the branches $L_{1}, \ldots, L_{d_1}$ and $T_2$ be a rooted tree with root $v_2$ and  the branches $R_{1}, \ldots, R_{d_2}$. Let $T_0$ be any tree with two  vertices $u$ and $v$.
Let $T$ be a tree obtained  from $T_1, T_2, T_0$ by identifying $v_1$ and $u$, $v_2$ and $v$, respectively. Let $d\ge \max\{d_1, d_2\}$ be a positive integer and rearrange  $L_{1}, \ldots, L_{d_1}, R_{1}, \cdots, R_{d_2}$ as  $L_{1}^{\prime}, \ldots, L_{d_1+d_2-d}^{\prime}, R_{1}^{\prime}, \ldots, R_{d}^{\prime}$ ($R_{1}^{\prime}, \ldots, R_{d_1+d_2}^{\prime}$ for $d_1+d_2\le d$) such that $\tau(S(L_{1}^{\prime}),x)\ge \ldots\ge \tau(S(L_{d_1+d_2-d}^{\prime}),x)\ge\tau (S(R_{1}^{\prime}),x)\ge\ldots\ge \tau( S(R_{d}^{\prime}),x)$.  Let $T_1^{\prime}$ be the  tree with root $v_1$ and   the branches $L_{1}^{\prime}, \ldots, L_{d_1+d_2-d}^{\prime}$  for $d_1+d_2>d$ ($T_1^{\prime}$ is an isolate vertex for $d_1+d_2\le d$ ) and  $T_2^{\prime}$ be the  tree with root $v_2$ and   the branches $R_{1}^{\prime}, \cdots, R_{d}^{\prime}$ for $d_1+d_2>d$ (the branches $R_{1}^{\prime}, \ldots, R_{ d_1+d_2}^{\prime}$ for $d_1+d_2\le d)$. Let $T^{\prime}$ be the tree obtained  from $T_1^{\prime}, T_2^{\prime}, T_0$ by identifying $v_1$ and $u$, $v_2$ and $v$, respectively (see Fig.1).
If $M_{10}(S(T_0),x)\le M_{01}(S(T_0),x)$, where $M_{10}(S(T_0),x)$ and $ M_{01}(S(T_0),x)$  are matching generating functions of  saturate $u$ but not $v$,  saturate $v$ but not $u$, respectively, then
\begin{equation}\label{exchange10}
M(S(T),x)\ge M(S(T^{\prime}),x).
\end{equation}
 Further (\ref{exchange10}) becomes equality if and only if
$$\max\{\tau(S(R_{j}),x): 1\le j\le d_2\}\le  \min\{\tau(S(L_{j}),x): 1\le j\le d_1 \}$$
with $d_2=d$,
or $M_{10}(S(T_0),x)= M_{01}(S(T_0),x)$ and
 $$\max\{\tau(S(L_{j}),x): 1\le j\le d_1\}\le  \min\{\tau(S(R_{j}),x): 1\le j\le d_2\}$$
 with $d_1=d$.
\end{theorem}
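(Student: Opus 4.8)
The plan is to reduce \eqref{exchange10} to an elementary rearrangement inequality by first establishing a product decomposition of $M(S(T),x)$ that isolates the role of $T_0$ from that of the two families of branches. Introduce the four matching generating functions $M_{\epsilon\delta}(S(T_0),x)$, $\epsilon,\delta\in\{0,1\}$, recording whether $u$ (resp.\ $v$) is saturated inside $S(T_0)$, so that the two quantities in the hypothesis are $M_{10}$ and $M_{01}$. Abbreviate $A_u=M_0(S(T_1),x)$, $C_u=M(S(T_1),x)$ and $A_v=M_0(S(T_2),x)$, $C_v=M(S(T_2),x)$. In $S(T)$ the vertex $u$ meets $S(T_0)$ and the subdivision vertices of its branches in edge-disjoint pieces sharing only $u$; conditioning on whether $u$ is matched inside $S(T_0)$ (so its branches cannot saturate it and contribute $A_u$) or not (so they contribute the full $C_u$), and doing the same at $v$ --- exactly the case analysis of Lemma~\ref{identify}, now applied at the two attachment points simultaneously --- yields
\begin{equation}\label{plan-decomp}
M(S(T),x)=M_{00}C_uC_v+M_{10}A_uC_v+M_{01}C_uA_v+M_{11}A_uA_v,
\end{equation}
together with the identical formula for $T'$ in the primed quantities. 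Proving \eqref{plan-decomp} is the first key step.

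Next I would pass to the variables of Lemma~\ref{subdivision-rec}. Fix $x>0$, set $f(\tau)=\dfrac{x}{1+x\tau}$ (positive and strictly decreasing), and put $S_u=\sum_{i=1}^{d_1}f(\tau(S(L_i),x))$ and $S_v=\sum_{j=1}^{d_2}f(\tau(S(R_j),x))$. By \eqref{LM-tau} one has $C_u/A_u=1+S_u$ and $C_v/A_v=1+S_v$, while $A_uA_v=\prod(xM_0(S(\cdot),x)+M(S(\cdot),x))$ is a product over \emph{all} $d_1+d_2$ branches and is therefore left invariant by the rearrangement. Factoring out $P:=A_uA_v=A_u'A_v'>0$ from \eqref{plan-decomp}, writing $\Sigma=S_u+S_v$ (a conserved quantity) and $S=S_v$, gives $M(S(T),x)=P\,g(S)$ with
\begin{equation}\label{plan-g}
g(S)=M_{00}(1+\Sigma-S)(1+S)+M_{10}(1+S)+M_{01}(1+\Sigma-S)+M_{11},
\end{equation}
and $M(S(T'),x)=P\,g(S')$ where $S'=S_v'=\sum_{j=1}^{d}f(\tau(S(R_j'),x))$. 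A short expansion collapses the quadratic part and produces the crucial factorisation
\begin{equation}\label{plan-diff}
M(S(T),x)-M(S(T'),x)=P\,(S-S')\Bigl[M_{00}\,(\Sigma-S-S')+M_{10}-M_{01}\Bigr].
\end{equation}

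It then remains only to sign the two factors of \eqref{plan-diff}. Since $f$ is decreasing, the branches $R_1',\dots,R_d'$ assigned to $v'$ are exactly those carrying the $d$ largest values of $f$ among all $d_1+d_2$ branches. As $d\ge\max\{d_1,d_2\}$ and $f>0$, the sum $S'$ of these $d$ largest values dominates both the sum $S=S_v$ of any $d_2$ of the $f$-values and the sum $S_u=\Sigma-S$ of any $d_1$ of them; hence $S-S'\le 0$ and $\Sigma-S-S'=S_u-S'\le 0$. With $M_{00}\ge0$ and the hypothesis $M_{10}\le M_{01}$, the bracket in \eqref{plan-diff} is $\le0$, so the product of the two non-positive factors is non-negative, and $P>0$ gives \eqref{exchange10}. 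For equality I would observe that for $x>0$ both $S-S'$ and the bracket are non-positive, so the right-hand side of \eqref{plan-diff} vanishes identically iff $S-S'\equiv0$ or the bracket $\equiv0$: the former forces $d_2=d$ with the $R$-branches already the $d$ smallest in $\tau$ (i.e.\ $\max_j\tau(S(R_j),x)\le\min_j\tau(S(L_j),x)$), while the latter, using $M_{00}>0$, forces $\Sigma-S-S'\equiv0$ and $M_{10}=M_{01}$, i.e.\ $d_1=d$ with the $L$-branches the $d$ smallest in $\tau$ ($\max_j\tau(S(L_j),x)\le\min_j\tau(S(R_j),x)$) --- precisely the two stated cases.

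The main obstacle is the first step, the two-point decomposition \eqref{plan-decomp}: because $u$ and $v$ both lie in $T_0$ and both carry branch families, one must track their matching states jointly --- matched inside $S(T_0)$, matched to an incident subdivision vertex of a branch, or unmatched --- and verify that the branch contributions factor cleanly as the coefficients $A_u,C_u,A_v,C_v$ paired against the correctly constrained $M_{\epsilon\delta}(S(T_0),x)$. Once \eqref{plan-decomp} is secured, the remaining work is the elementary observation that a sum of the $d$ largest among a fixed list of positive numbers dominates any sum of at most $d$ of them, which is exactly where the hypotheses $d\ge\max\{d_1,d_2\}$ and $M_{10}\le M_{01}$ are used.
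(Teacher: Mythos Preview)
Your proposal is correct and follows essentially the same route as the paper: the four-term decomposition \eqref{plan-decomp} is exactly the paper's opening expansion, factoring out $P=A_uA_v$ and passing to the sums $S_u,S_v$ via Lemma~\ref{subdivision-rec} reproduces the paper's bracketed expressions, and your single factorisation \eqref{plan-diff} is precisely the sum of the paper's inequalities (\ref{eq9}) (weighted by $M_{00}$) and (\ref{eq10}). The only difference is cosmetic --- you package the two non-negative pieces into one product of two non-positive factors --- and your equality analysis (using $M_{00}>0$ since the empty matching contributes the constant term $1$) recovers the same two cases.
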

 $$
\begin{tikzpicture}[scale=0.6]\label{fig1}
\coordinate[label=$u$] (I) at (3cm,2cm);
\coordinate[label=$v$] (I) at (9cm,2cm);
\coordinate[label=$\vdots$] (I) at (-1cm,2cm);
\coordinate[label=$\vdots$] (I) at (-1cm,1cm);
\coordinate[label=$\vdots$] (I) at (13cm,2cm);
\coordinate[label=$\vdots$] (I) at (13cm,1cm);
\coordinate[label=$\cdots$] (I) at (6cm,2.5cm);
\coordinate[label=$T_0$] (I) at (6cm,-0cm);
\coordinate[label=$T_1$] (I) at (2.5cm,-2.5cm);
\coordinate[label=$T_2$] (I) at (9.4cm,-2.5cm);
\coordinate[label=$L_1$] (I) at (-2cm,6.5cm);
\coordinate[label=$L_2$] (I) at (-2cm,4cm);
\coordinate[label=$L_{d_1}$] (I) at (-2cm,-1.5cm);
\coordinate[label=$R_1$] (I) at (14cm,6.5cm);
\coordinate[label=$R_2$] (I) at (14cm,4cm);
\coordinate[label=$R_{d_2}$] (I) at (14cm,-1.6cm);
\coordinate[label=$u_1$] (I) at (-1cm,7cm);
\coordinate[label=$u_2$] (I) at (-1cm,4.5cm);
\coordinate[label=$u_{d_1}$] (I) at (-0.9cm,-2.1cm);
\coordinate[label=$v_1$] (I) at (13cm,7cm);
\coordinate[label=$v_2$] (I) at (13cm,4.5cm);
\coordinate[label=$v_{d_2}$] (I) at (13cm,-2cm);
\fill[black]
(3cm,3cm) circle(1mm)
(9cm,3cm) circle(1mm);
\draw[-]
(3cm,3cm)--(-1cm,7cm)
(-1cm,7cm)--(-2.5cm,8cm)--(-2.5cm,6cm)--cycle
(9cm,3cm)--(13cm,7cm)
(13cm,7cm)--(14.5cm,8cm)--(14.5cm,6cm)--cycle
(3cm,3cm)--(-1cm,4.5cm)
(-1cm,4.5cm)--(-2.5cm,5.5cm)--(-2.5cm,3.5cm)--cycle
(9cm,3cm)--(13cm,4.5cm)
(13cm,4.5cm)--(14.5cm,5.5cm)--(14.5cm,3.5cm)--cycle
(3cm,3cm)--(-1cm,-1cm)
(-1cm,-1cm)--(-2.5cm,-2cm)--(-2.5cm,0cm)--cycle
(9cm,3cm)--(13cm,-1cm)
(13cm,-1cm)--(14.5cm,-2cm)--(14.5cm,0cm)--cycle;
\draw[dashed]
(3.2cm,-2.5cm)--(3.2cm,8.5cm)--(-3cm,8.5cm)--(-3cm,-2.5cm)--cycle
(8.8cm,-2.5cm)--(8.8cm,8.5cm)--(15cm,8.5cm)--(15cm,-2.5cm)--cycle;
\draw (6,3) ellipse (4cm and 2cm);
\end{tikzpicture}$$
$$Fig.~1, \ {\rm  Tree} \  T$$
%Let $T$ be a tree described as Fig.1. %~\ref{fig1}$.

\begin{proof}
Let
\begin{itemize} \setlength{\itemsep}{-\itemsep}
\item $m_{00}(S(T_0),k)$ be the number of matchings of $T_0$ of cardinality $k$ which saturate neither $u$ nor $v$;
\item $m_{10}(S(T_0),k)$ be the number of matchings of $T_0$ of cardinality $k$ which saturate $u$, but not $v$;
\item $m_{01}(S(T_0),k)$ be the number of matchings of $T_0$ of cardinality $k$ which saturate $v$, but not $u$;
\item $m_{11}(S(T_0),k)$ be the number of matchings of $T_0$ of cardinality $k$ which saturate both $u$ and $v$.
    \end{itemize}
By Lemma~\ref{subdivision-rec}, we have
\begin{footnotesize}
\begin{eqnarray*}
M(S(T),x)&=&M_{00}(S(T_0),x)M(S(T_1),x)M(S(T_2),x)+M_{10}(S(T_0),x)M_0(S(T_1),x)M(S(T_2),x)\\
&+&M_{01}(S(T_0))M(S(T_1),x)M_0(S(T_2))+M_{11}(S(T_0))M_0(S(T_1),x)M_0(S(T_2),x)\\
&=& \prod_{j=1}^{d_1} (M(S(L_{j}),x)+xM_0(S(L_{j}),x))\prod_{j=1}^{d_2}(M(S(R_{j}),x)+xM_0(S(R_{j}),x))\\
&\times &\left\{M_{00}(S(T_0),x)\left(1+\sum_{j=1}^{d_1}\frac{x}{1+x\tau(S(L_{j}),x)}\right)
\left(1+\sum_{j=1}^{d_2}\frac{x}{1+x\tau(S(R_{j}),x)}\right)+M_{11}(S(T_0))\right.
\\
&+&\left.  M_{10}(S(T_0),x)(1+\sum_{j=1}^{d_2}\frac{x}{1+x\tau(S(R_{j}),x)}) + M_{01}(S(T_0),x)(1+\sum_{j=1}^{d_1}\frac{x}{1+x\tau(S(L_{j}),x)}) \right\}.\\
\end{eqnarray*}
\end{footnotesize}
On the other hand,
\begin{footnotesize}
\begin{eqnarray*}
M(S(T^{\prime}),x)&=&M_{00}(S(T_0),x)M(S(T_1^{\prime}),x)M(S(T_2^{\prime}),x)+M_{10}(S(T_0),x)M_0(S(T_1^{\prime}),x)M(S(T_2^{\prime}),x)\\
&+&M_{01}(S(T_0))M(S(T_1^{\prime}),x)M_0(S(T_2^{\prime}),x)+M_{11}(S(T_0),x)M_0(S(T_1^{\prime}),x)M(S(T_2^{\prime}),x)\\
&=& \prod_{j=1}^{d_1+d_2-d} (M(S(L_{j}^{\prime}),x)+xM_0(S(L_{j}^{\prime}),x))\prod_{j=1}^{d}(M(S(R_{j}^{\prime}),x)
+xM_0(S(R_{j}^{\prime}),x))\\
&\times &\left\{M_{00}(S(T_0),x)\left(1+\sum_{j=1}^{d_1+d_2-d}\frac{x}{1+x
\tau(S(L_j^{\prime}),x)}\right)\left( 1+\sum_{j=1}^{d}\frac{x}{1+x
\tau(S(R_j^{\prime}),x)} \right)+M_{11}(S(T_0),x)
 \right.\\
&+& \left. M_{10}(S(T_0),x)\left(1+\sum_{j=1}^d\frac{1}{1+\tau(S(R_{j}^{\prime}),x)}\right)+ M_{01}(S(T_0),x)\left(1+\sum_{j=1}^{d_1+d_2-d}\frac{x}{1+x\tau(S(L_{j}^{\prime}),x)}\right)\right\}.\\
\end{eqnarray*}
\end{footnotesize}
 Since $L_{1}^{\prime}, \ldots, L_{d_1+d_2-d}^{\prime}, R_{1}^{\prime}, \ldots, R_{d}^{\prime}$ is a rearrangement of $L_{1}, \ldots, L_{d_1}, R_{1}, \ldots, R_{d_2}$ and $\tau(S(L_{1}^{\prime}),x)\ge \ldots\ge \tau(S(L_{d_1+d_2-d}^{\prime}),x)\ge\tau (S(R_{1}^{\prime}),x)\ge\ldots\ge \tau( S(R_{d}^{\prime}),x),$  we have  \begin{eqnarray}\label{eq8-1}
&&\prod_{j=1}^{d_1+d_2-d}(M(S(L_{j}^{\prime}),x)+xM_0(S(L_{j}^{\prime}),x))\prod_{j=1}^{d}(M(S(R_{j}^{\prime}),x)
+xM_0(S(R_{j}^{\prime}),x))\nonumber\\
&=&\prod_{j=1}^{d_1} (M(S(L_{j}),x)+xM_0(S(L_{j}),x))\prod_{j=1}^{d_2}(M(S(R_{j}),x)+xM_0(S(R_{j}),x))
\end{eqnarray}
and

\begin{equation}\label{eqn20}
\sum_{j=1}^{d_1+d_2-d}\frac{x}{1+x\tau(S(L_{j}^{\prime}),x)}+
\sum_{j=1}^d\frac{x}{1+x\tau(S(R_{j}^{\prime}),x)}=
\sum_{j=1}^{d_1}\frac{x}{1+x\tau(S(L_{j}),x)}+\sum_{j=1}^{d_2}\frac{x}{1+x\tau(S(R_{j}),x)}.
\end{equation}
Further
$$\frac{x}{1+x\tau(S(L_{1}^{\prime}),x)}\le \ldots\le \frac{x}{1+x\tau(S(L_{d_1+d_2-d}^{\prime}),x)}
\le \frac{x}{1+x\tau(S(R_{1}^{\prime}),x)}\le\ldots\le \frac{x}{1+x\tau(S(R_{d}^{\prime}),x)},$$
yields
 \begin{equation}\label{eq7}
\sum_{j=1}^{d_1}\frac{x}{1+x\tau(S(L_{j}),x)}\ge
\sum_{j=1}^{d_1+d_2-d}\frac{x}{1+x\tau(S(L_{j}^{\prime}),x)}
\end{equation} and
\begin{equation}\label{eq8}
\sum_{j=1}^{d_2}\frac{x}{1+x\tau(S(R_{j}),x)}\le
\sum_{j=1}^d\frac{x}{1+x\tau(S(R_{j}^{\prime}),x)}.
\end{equation}
Moreover, (\ref{eq8}) becomes equality if and only if $d_2=d$ and
$$\max\{\tau(S(R_{j}),x): 1\le j\le d_2\}\le  \min\{\tau(S(L_{j}),x): 1\le j\le d_1\}.$$
Then (\ref{eqn20}), (\ref{eq7}) and (\ref{eq8}) yield
\begin{footnotesize}
\begin{eqnarray}\label{eq9}
&&\left(1+\sum_{j=1}^{d_1}\frac{x}{1+x\tau(S(L_{j}),x)}\right)
\left(1+\sum_{j=1}^{d_2}\frac{x}{1+x\tau(S(R_{j}),x)}\right)\nonumber\\
&-&\left(1+\sum_{j=1}^{d_1+d_2-d}\frac{x}{1+x\tau(S(L_{j}^{\prime}),x)}\right)
\left(1+\sum_{j=1}^{d}\frac{x}{1+x\tau(S(R_{j}^{\prime}),x)}\right)\nonumber\\
&=&
\left(\sum_{j=1}^{d}\frac{x}{1+x\tau(S(R_{j}^{\prime}),x)}-
\sum_{j=1}^{d_1}\frac{x}{1+x\tau(S(L_{j}),x)}\right)
\left(\sum_{j=1}^{d}\frac{x}{1+x\tau(S(R_{j}^{\prime}),x)}
-\sum_{j=1}^{d_2}\frac{x}{1+x\tau(S(R_{j}),x)}\right)\nonumber\\
&\ge& 0
\end{eqnarray}
\end{footnotesize}
Moreover,
\begin{footnotesize}
\begin{eqnarray}\label{eq10}
&&M_{10}(S(T_0),x)\left(1+\sum_{j=1}^{d_2}\frac{x}{1+x\tau(S(R_{j}),x)}\right) + M_{01}(S(T_0),x)\left(1+\sum_{j=1}^{d_1}\frac{x}{1+x\tau(S(L_{j}),x)}\right)\nonumber\\
&-&\left\{
M_{10}(S(T_0),x)\left(1+\sum_{j=1}^d\frac{1}{1+\tau(S(R_{j}^{\prime}),x)}\right)+ M_{01}(S(T_0),x)\left(1+\sum_{j=1}^{d_1+d_2-d}\frac{x}{1+x\tau(S(L_{j}^{\prime}),x)}\right)
\right\}\nonumber\\
&=&\left(M_{01}(S(T_0),x)-M_{10}(S(T_0),x)\right)\left(\sum_{j=1}^d\frac{x}{1+x\tau(S(R_{j}^{\prime}),x)}
-
\sum_{j=1}^{d_2}\frac{x}{1+x\tau(S(R_{j}),x)}\right)\ge 0.
\end{eqnarray}
\end{footnotesize}
Hence by (\ref{eq8-1}),(\ref{eq9}) and (\ref{eq10}), we have
 $$M(S(T),x)-M(S(T^{\prime}),x)\ge 0.$$
 Further, if  $M(S(T),x)-M(S(T^{\prime}),x)= 0,$ then
  (\ref{eq9}) and (\ref{eq10}) become equalities.
  Hence $$\sum_{j=1}^d\frac{x}{1+x\tau(S(R_{j}),x)}=
\sum_{j=1}^{d_2}\frac{x}{1+x\tau(S(R_{j}^{\prime}),x)},$$ or
$$M_{10}(S(T_0),x)=M_{01}(S(T_0),x) ~\mbox{and}~ \sum_{j=1}^d\frac{x}{1+x\tau(S(R_{j}),x)}=
\sum_{j=1}^{d_1}\frac{x}{1+x\tau(S(L_{j}^{\prime}),x)}.$$
Therefore, $$\max\{\tau(S(R_{j}),x): 1\le j\le d_2\}\le  \min\{\tau(S(L_{j}),x): 1\le j\le d_1 \} \mbox{
with $d_2=d$,}$$
or $M_{10}(S(T_0),x)= M_{01}(S(T_0),x)$ and
 $$\max\{\tau(S(L_{j}),x): 1\le j\le d_1\}\le  \min\{\tau(S(R_{j}),x): 1\le j\le d_2\}  ~~\mbox{with $d_1=d$.}$$
This completes the proof.
 \end{proof}

Let $\mathcal{T}_{n, d+1}$ be the set of all trees of order $n$ with the maximum degree $d+1$ and  $\mathcal{S(T)}_{n, d+1}$ be the set of the subdivision trees of any tree $T$ in $\mathcal{T}_{n, d+1}$.  A tree $S(\widetilde{T})$  in $\mathcal{S(T)}_{n, d+1}$ is called an {\it optimal tree} if $M(S(T),x)\ge M(S(\widetilde{T}),x)$ for any $S(T)\in \mathcal{S(T)}_{n, d+1}$ and $x>0$.
\begin{corollary}\label{exchange} Let $S(\widetilde{T})$ be  an optimal tree in $\mathcal{S(T)}_{n, d+1}$. If $\widetilde{T}$ can be decomposed as $T_1,T_2$ and $T_0$ as $Fig.~1$. If $u$ and $v$ are non-pendent vertices
and $\tau(S(L_1),x)>\tau(S(R_1),x), $ then   $d_2= d$ and
\begin{equation}\label{max-min}
\min\{\tau(S(L_i),x): 1\le i\le d_1\}\ge \max\{\tau(S(R_i),x): 1\le i\le d_2\}.
\end{equation}
\end{corollary}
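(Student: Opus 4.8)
The plan is to convert the optimality of $S(\widetilde{T})$ into an \emph{equality} in the exchange theorem (Theorem~\ref{exchange0}) and then read the asserted structure directly off its equality clause. In the decomposition of Fig.~1 each of $u,v$ joins $T_0$ by a single edge, so $d_1=\deg_{\widetilde{T}}(u)-1$ and $d_2=\deg_{\widetilde{T}}(v)-1$. Because $u$ and $v$ are non-pendent we have $d_1,d_2\ge 1$, so $L_1,R_1$ exist and the hypothesis $\tau(S(L_1),x)>\tau(S(R_1),x)$ is meaningful; and because $\widetilde{T}$ has maximum degree $d+1$ we have $d_1,d_2\le d$, so $d\ge\max\{d_1,d_2\}$ and Theorem~\ref{exchange0} applies with this $d$.

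Next I would verify that the rearranged tree $T'$ is again an admissible competitor. Only the degrees of $u$ and $v$ change: in $T'$ the vertex $v$ carries $d$ branches, so $\deg_{T'}(v)=d+1$, while $u$ carries $d_1+d_2-d$ branches, so $\deg_{T'}(u)=d_1+d_2-d+1\le d+1$ since $d_1\le d$; the order is unchanged, hence $T'\in\mathcal{T}_{n,d+1}$ and $S(T')\in\mathcal{S(T)}_{n,d+1}$. Optimality of $S(\widetilde{T})$ then gives $M(S(T'),x)\ge M(S(\widetilde{T}),x)=M(S(T),x)$ for every $x>0$, whereas Theorem~\ref{exchange0} gives $M(S(T),x)\ge M(S(T'),x)$. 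The two inequalities force $M(S(T),x)=M(S(T'),x)$, so equality holds in (\ref{exchange10}) and we are in the equality case of Theorem~\ref{exchange0}.

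It remains to read off the conclusion, distinguishing the sign of $M_{10}(S(T_0),x)-M_{01}(S(T_0),x)$. If $M_{10}\le M_{01}$, Theorem~\ref{exchange0} applies verbatim, and its equality clause offers two possibilities. The possibility ``$M_{10}=M_{01}$ and $\max_{1\le j\le d_1}\tau(S(L_j),x)\le\min_{1\le j\le d_2}\tau(S(R_j),x)$ with $d_1=d$'' would entail $\tau(S(L_1),x)\le\tau(S(R_1),x)$, contradicting the hypothesis; since equality does hold, the remaining possibility must occur, namely $d_2=d$ together with $\max_{1\le j\le d_2}\tau(S(R_j),x)\le\min_{1\le j\le d_1}\tau(S(L_j),x)$, which is exactly $d_2=d$ and (\ref{max-min}). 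If instead $M_{10}>M_{01}$, I would apply the theorem again with the roles of $u,v$ (and of $T_1,T_2$, of the $L$'s and $R$'s, and of $d_1,d_2$) interchanged, so that the hypothesis $M_{01}\le M_{10}$ is met; translating its two equality possibilities back to the original notation, one forces $\max_j\tau(S(L_j),x)\le\min_j\tau(S(R_j),x)$ (again contradicting the hypothesis) and the other forces $M_{10}=M_{01}$ (contradicting strictness), so equality would be impossible, contrary to what optimality established. Hence $M_{10}>M_{01}$ cannot occur, and only the first case survives, yielding $d_2=d$ and (\ref{max-min}).

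Since Theorem~\ref{exchange0} already performs the analytic work, the logic above is short; the step I expect to demand the most care is the bookkeeping around the equality clause. One must track faithfully how the branches and the quantities $M_{10},M_{01}$ are relabelled when $u$ and $v$ are swapped, and confirm that $T'$ (and its swapped analogue) really lie in $\mathcal{T}_{n,d+1}$ so that optimality may be invoked. The single observation that drives every case is elementary but decisive: any inequality of the form $\max_j\tau(S(L_j),x)\le\min_j\tau(S(R_j),x)$ forces $\tau(S(L_1),x)\le\tau(S(R_1),x)$ and is therefore incompatible with the standing hypothesis $\tau(S(L_1),x)>\tau(S(R_1),x)$; this is precisely what discards the unwanted equality possibility in both cases.
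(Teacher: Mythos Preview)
Your proof is correct and follows essentially the same approach as the paper's: both arguments feed the optimality of $S(\widetilde{T})$ into Theorem~\ref{exchange0} to force equality in (\ref{exchange10}), and both handle the possibility $M_{10}>M_{01}$ by swapping the roles of $u$ and $v$ so that the theorem's hypothesis is met, then deriving a contradiction from the equality clause. The only difference is organizational: the paper first isolates $M_{10}(S(T_0),x)\le M_{01}(S(T_0),x)$ as a preliminary claim (proved by the swap argument) and then invokes Theorem~\ref{exchange0} once, whereas you run a direct case split on the sign of $M_{10}-M_{01}$; you are also more explicit than the paper in checking that the rearranged tree lies in $\mathcal{T}_{n,d+1}$ and in ruling out the second alternative of the equality clause. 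One small presentational point: your second paragraph invokes Theorem~\ref{exchange0} to obtain $M(S(T),x)\ge M(S(T'),x)$ before the case split, but that inequality is only licensed once $M_{10}\le M_{01}$ is in hand, so strictly speaking that sentence belongs inside your Case~1; the logic is unaffected.
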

\begin{proof} We follows the symbols in Theorem~\ref{exchange0}. First we have the following claim
\begin{equation}\label{coreq11}
M_{10}(S(T_0),x)\le M_{01}(S(T_0),x)
\end{equation}
In fact, suppose $M_{10}(S(T_0),x)>M_{01}(S(T_0),x)$.
Rearrange  $L_{1}, \ldots, L_{d_1}, R_{1}, \ldots, R_{d_2}$ as  $L_{1}^{\prime}, \ldots, L_{d_1+d_2-d}^{\prime}, R_{1}^{\prime}, \ldots, R_{d}^{\prime}$ ($R_{1}^{\prime}, \ldots, R_{d_1+d_2}^{\prime}$ for $d_1+d_2\le d$) such that $\tau(S(L_{1}^{\prime}),x)\ge \ldots\ge \tau(S(L_{d_1+d_2-d}^{\prime}),x)\ge\tau (S(R_{1}^{\prime}),x)\ge\ldots\ge \tau( S(R_{d}^{\prime}),x)$.
 Let $T_1^{\prime\prime}$ be the  tree with root $v_1$ and  the branches $R_{1}^{\prime}, \ldots, R_{d}^{\prime}$  for $d_1+d_2>d$ (the branches $R_{1}^{\prime}, \ldots, R_{d_1+d_2}^{\prime}$ for $d_1+d_2\le d$) and  $T_2^{\prime\prime}$ be the  tree with root $v_2$ and   the branches $L_{1}^{\prime}, \cdots, L_{d_1+d_2-d}^{\prime}$ for $d_1+d_2>d$ (no branches for $d_1+d_2\le d)$. Then Let $T^{\prime\prime}$ be the tree obtained  from $T_1^{\prime\prime}, T_2^{\prime\prime}, T_0$ by identifying $v_1$ and $u$, $v_2$ and $v$, respectively.
 By Theorem~\ref{exchange0}, $M(S(\widetilde{T}), x)\ge M(S(T^{\prime\prime}), x)$. On the other hand, since
 $S(T^{\prime\prime})\in \mathcal{S(T)}_{n, d+1}$ and $S(\widetilde{T})$ is an optimal tree in  $\mathcal{S(T)}_{n, d+1}$, we have $M(S(T^{\prime\prime}), x)\ge M(S(\widetilde{T}), x)$. Hence $M(S(\widetilde{T}), x)= M(S(T^{\prime\prime}), x)$. By Theorem~\ref{exchange0} again,   we have $$\max\{\tau(S(L_{j}),x): 1\le j\le d_1\}\le  \min\{\tau(S(R_{j}),x): 1\le j\le d_2 \},$$
 which contradicts to the condition $\tau(S(L_1),x)>\tau(S(R_1),x), $. Hence the claim holds.
Hence by Theorem~\ref{exchange0}, the corollary holds.
\end{proof}
 The following Corollary~\ref{cor27} is easily obtained from Theorem~\ref{exchange0}.
\begin{corollary}\label{cor27}
 Let $S(\widetilde{T})$ be  an optimal tree in $\mathcal{S(T)}_{n, d+1}$. Then there exists  at most one vertex $u$ with degree $2\le deg(u)\le d$.
 \end{corollary}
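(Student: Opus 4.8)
The plan is to argue by contradiction. Suppose the optimal tree $\widetilde{T}$ contains two distinct non-pendent vertices $u$ and $v$ with $2\le \deg(u)\le d$ and $2\le \deg(v)\le d$. I will manufacture from $\widetilde{T}$ a tree $T'\in\mathcal{T}_{n,d+1}$ with $M(S(\widetilde{T}),x)>M(S(T'),x)$ for $x>0$, which contradicts the optimality of $S(\widetilde{T})$.

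First I would realize $\widetilde{T}$ in the form of Fig.~1. Since $\widetilde{T}$ is a tree, there is a unique $u$--$v$ path; let $T_0$ be the subtree formed by this path together with all branches hanging off its interior vertices, so that each of $u,v$ meets $T_0$ in exactly one edge. The remaining branches at $u$ then constitute $T_1$ with $d_1=\deg(u)-1$ branches $L_1,\ldots,L_{d_1}$, and those at $v$ constitute $T_2$ with $d_2=\deg(v)-1$ branches $R_1,\ldots,R_{d_2}$; reassembling by identifying $v_1=u$ and $v_2=v$ recovers $\widetilde{T}$. From $2\le \deg(u),\deg(v)\le d$ we get $1\le d_1,d_2\le d-1$. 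After relabelling $u$ and $v$ if necessary, I may assume $M_{10}(S(T_0),x)\le M_{01}(S(T_0),x)$, so the hypothesis of Theorem~\ref{exchange0} holds.

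The decisive step is to invoke Theorem~\ref{exchange0} with its integer parameter taken to be exactly $d$; this is admissible because $d\ge\max\{d_1,d_2\}$. It yields the rearranged tree $T'$ with $M(S(\widetilde{T}),x)\ge M(S(T'),x)$. I would then check that $T'$ still lies in $\mathcal{T}_{n,d+1}$: the vertex $v_2$ acquires at most $d$ branches, hence degree at most $d+1$; the vertex $v_1$ acquires $d_1+d_2-d$ branches (or none), hence degree at most $d-1$; and any degree-$(d+1)$ vertex of $\widetilde{T}$ --- which lies outside $\{u,v\}$ since $\deg(u),\deg(v)\le d$ --- is untouched by the branch exchange, so the maximum degree of $T'$ remains exactly $d+1$.

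It remains to upgrade the inequality to a strict one, and here lies the crux. Both equality alternatives in Theorem~\ref{exchange0} require $d_2=d$ or $d_1=d$; but our choice of parameter forces $d_1,d_2\le d-1<d$, so neither alternative can occur. Consequently $M(S(\widetilde{T}),x)>M(S(T'),x)$ for $x>0$, contradicting $M(S(T'),x)\ge M(S(\widetilde{T}),x)$ from optimality. The main obstacle is precisely this interplay: I must set up $T_0$ so that $d_1=\deg(u)-1$ and $d_2=\deg(v)-1$ are both strictly smaller than $d$, and simultaneously run the exchange with parameter $d$, since it is exactly this mismatch $d_1,d_2\ne d$ that defeats both equality cases and turns the exchange bound into the strict inequality driving the contradiction.
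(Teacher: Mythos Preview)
Your argument is correct and is precisely the derivation the paper intends (it states Corollary~\ref{cor27} as an immediate consequence of Theorem~\ref{exchange0} without giving a proof): with $d_1=\deg(u)-1$ and $d_2=\deg(v)-1$ both at most $d-1$, neither equality clause of Theorem~\ref{exchange0} can fire, so the exchange produces a tree in $\mathcal{T}_{n,d+1}$ with strictly smaller $M(S(\cdot),x)$, contradicting optimality. One minor point of phrasing: the relabelling of $u$ and $v$ to arrange $M_{10}(S(T_0),x)\le M_{01}(S(T_0),x)$ need only be done at a single fixed $x_0>0$, since a strict inequality $M(S(\widetilde{T}),x_0)>M(S(T'),x_0)$ at one point already contradicts the definition of optimality.
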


\section{Proof of Theorem~\ref{main1}}
In order to prove Theorem~\ref{main1}, we need some  Lemmas.
%A {\it rooted $d-$ary tree} is a root tree where  every vertex has $0$ or $d$ children. The
 %{\it rooted complete $d-$ary tree} of height $h-1$ is denoted by $C_h$, that is $C_1$ is one vertex and $C_h$ has exactly $d$ branched $C_{h-1}, \cdots, C_{h-1}$.
 %It is convenient that the empty graph is denoted by $C_0$ and  $z_0(C_0)=0, $ $z_1(C_0)=1$. Then it is easy to see that
%$$C_0)=0,\  \tau(C_1)=1,\  \tau(C_h)=\frac{1}{1+d\tau(C_{h-1})}.$$

\begin{lemma}\label{complete tree}
$$%\tau(S(C_0),x)=\infty,
\tau(S(C_1),x)=1,\ \ \tau(S(C_2),x)=\frac{1+x}{(d+1)x+1},$$
\begin{equation}\label{ch-ch-1}
\tau(S(C_h),x)=\frac{1}{1+\frac{dx}{1+x\tau(S(C_{h-1}),x)}}.\end{equation}
Further $\tau(S(C_h),x)<\tau(S(C_{h-1}),x)$ for $h\ge 2$.
\end{lemma}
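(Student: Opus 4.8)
The plan is to read all three explicit identities off the recursion in Lemma~\ref{subdivision-rec} and then to obtain the strict monotonicity either as an immediate consequence of Lemma~\ref{subgraph} or by a one-line induction. To begin, I would compute $\tau(S(C_1),x)$ straight from the definition: $C_1$ is a single vertex, so its subdivision $S(C_1)$ is again a single vertex carrying no edge; its only matching is the empty one, which does not saturate the root, so $M_0(S(C_1),x)=M(S(C_1),x)=1$ and hence $\tau(S(C_1),x)=1$.

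Next, for the recursion (\ref{ch-ch-1}) I would apply formula (\ref{LM-tau}) of Lemma~\ref{subdivision-rec} to $C_h$, whose root has exactly $d$ branches, each equal to $C_{h-1}$. Because all $d$ summands coincide, the sum collapses and gives
\[ \tau(S(C_h),x)=\frac{1}{1+\sum_{j=1}^{d}\frac{x}{1+x\tau(S(C_{h-1}),x)}}=\frac{1}{1+\frac{dx}{1+x\tau(S(C_{h-1}),x)}}. \]
Specializing to $h=2$ and substituting $\tau(S(C_1),x)=1$ then yields $\tau(S(C_2),x)=\tfrac{1}{1+dx/(1+x)}=\tfrac{1+x}{(d+1)x+1}$, exactly as claimed.

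For the strict inequality $\tau(S(C_h),x)<\tau(S(C_{h-1}),x)$ the cleanest route is Lemma~\ref{subgraph}: an easy induction on $h$ shows that $C_{h-1}$ embeds in $C_h$ as a proper subtree sharing the root, since each branch $C_{h-2}$ of $C_{h-1}$ sits inside a branch $C_{h-1}$ of $C_h$ and $C_{h-1}$ has strictly fewer vertices; then Lemma~\ref{subgraph} gives $\tau(S(C_{h-1}),x)>\tau(S(C_h),x)$ at once. Alternatively, and self-containedly from (\ref{ch-ch-1}), I would write $\tau(S(C_h),x)=f(\tau(S(C_{h-1}),x))$ with $f(t)=\tfrac{1+xt}{1+xt+dx}$, observe that $f'(t)=\tfrac{dx^2}{(1+xt+dx)^2}>0$ so $f$ is strictly increasing for $x>0$, check the base case $\tau(S(C_2),x)<1=\tau(S(C_1),x)$ (equivalently $(d+1)x+1-(1+x)=dx>0$), and then propagate the inequality through the increasing map $f$ by induction on $h$. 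The only points needing care are the base-case sign and the claim that $f$ is increasing; once these are in hand the induction is immediate, so I do not expect any genuine obstacle in this lemma.
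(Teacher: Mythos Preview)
Your proposal is correct. The computations of $\tau(S(C_1),x)$, the recursion~(\ref{ch-ch-1}), and $\tau(S(C_2),x)$ coincide with the paper's, and your inductive monotonicity argument via the strictly increasing map $f(t)=(1+xt)/(1+xt+dx)$ is essentially the paper's own proof: the paper computes $\tau(S(C_h),x)-\tau(S(C_{h-1}),x)$ directly from~(\ref{ch-ch-1}) and observes that this difference is a positive multiple of $\tau(S(C_{h-1}),x)-\tau(S(C_{h-2}),x)$, which is precisely the statement that $f$ is increasing. Your first alternative, invoking Lemma~\ref{subgraph} after noting that $C_{h-1}$ sits inside $C_h$ as a proper rooted subtree, is a valid shortcut the paper does not take; it replaces the explicit difference computation by an appeal to an already-proved lemma, which is slightly cleaner, while the paper's direct calculation has the minor advantage of keeping the argument self-contained.
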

\begin{proof} Clearly, %$\tau(S(C_0),x)=\tau(C_0,x)=\infty,$ and
$\tau(S(C_1),x)=\tau(C_1,x)=1.$
For $h\ge 2$, by (\ref{LM-tau}), it is easy to see that
$$\tau(S(C_h),x)=\frac{1}{1+\frac{dx}{1+x\tau(S(C_{h-1}),x)}}.$$
Further,
$$\tau(S(C_2),x)-\tau(S(C_1),x)=\frac{1+x}{1+(d+1)x}-1=-\frac{dx}{1+(d+1)x}<0.$$
Then, for $h>2$,
\begin{eqnarray*}
&&\tau(S(C_h),x)-\tau(S(C_{h-1}),x)=\frac{1}{1+\frac{dx}{1+x\tau(S(C_{h-1}),x)}}-
\frac{1}{1+\frac{dx}{1+x\tau(S(C_{h-2}),x)}}\\
&=&\frac{dx^2\left(\tau(S(C_{h-1}),x)-\tau(S(C_{h-2}),x)\right)}{(1+x\tau(S(C_{h-1}),x)(1+x\tau(S(C_{h-2}),x)
\left(1+\frac{dx}{1+x\tau(S(C_{h-1}),x)}\right)\left(1+\frac{dx}{1+x\tau(S(C_{h-2}),x) }\right)}<0.\\
%&<&0.
\end{eqnarray*}
This completes the proof.
\end{proof}

Let $u$ be any vertex in a tree $T=(V(T), E(T))$. Denote by $N(u)$ the set of all vertices adjacent to  $u$, i.e., $N(u):=\{v\in V(T) \ |\ uv\in E(T)\}$.

\begin{lemma}\label{exchange-cor-d}
Let $S(\widetilde{T})$ be an optimal tree in $\mathcal{S(T)}_{n, d+1}$.  If  there exists a vertex $u$ with degree $2\le deg(u)\le d$ in $\widetilde{T}$, then there are $deg(u)-1$ pendent vertices in   $N(u)$.  Further, if there exists a vertex $v\neq u$ such that
there are $1\le k\le d-1$ pendent vertices in  $N(v)$, then $uv\in E(\widetilde{T})$, and  there are $d$ pendent vertices or no pendent vertices in  $N(w)$ for any $w\neq u, v.$
\end{lemma}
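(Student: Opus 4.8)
The plan is to argue both assertions by contradiction, each time using the optimality of $S(\widetilde T)$ to force the equality cases of Theorem~\ref{exchange0} and Corollary~\ref{exchange}. I will lean on two facts. First, a pendent neighbour contributes the branch $C_1$ with $\tau(S(C_1),x)=1$, which by Lemma~\ref{subgraph} is the maximal possible value of $\tau$, so a non-pendent branch has $\tau<1$. Second, writing $f(t)=\bigl(1+\tfrac{dx}{1+xt}\bigr)^{-1}$, an increasing function, every finite rooted tree $B$ whose vertices have at most $d$ children satisfies $\tau(S(B),x)>t^{\ast}$, where $t^{\ast}$ is the fixed point of $f$; by Lemma~\ref{complete tree} this $t^{\ast}$ equals $\lim_{h\to\infty}\tau(S(C_h),x)$. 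The bound is a one-line induction on $|V(B)|$ via (\ref{LM-tau}), with base value $1>t^{\ast}$, and one checks that $f(t)<t$ for $t>t^{\ast}$.

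For the first assertion, suppose $u$ has two non-pendent neighbours $c_1,c_2$ and let $B_1,B_2$ be the subtrees hanging at $c_1,c_2$ on the side away from $u$. Using hubs $u,c_1$ with $T_0$ the single edge $uc_1$, for which $M_{10}(S(T_0),x)=M_{01}(S(T_0),x)=x$, Theorem~\ref{exchange0} applies; optimality forces its equality case, and since $d_1=\deg(u)-1\le d-1<d$ the second alternative is excluded, leaving $\deg(c_1)=d+1$ together with $\min_j\tau(S(L_j),x)\ge\max_j\tau(S(R_j),x)$. In particular $\tau(S(B_2),x)\ge\tau(S(R),x)$ for every branch $R$ of $c_1$, so feeding this into (\ref{LM-tau}) yields $\tau(S(B_1),x)\le f(\tau(S(B_2),x))$; running the same argument with $c_2$ gives $\deg(c_2)=d+1$ and $\tau(S(B_2),x)\le f(\tau(S(B_1),x))$. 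As $B_1,B_2$ are finite we have $\tau(S(B_i),x)>t^{\ast}$, hence $f(\tau(S(B_i),x))<\tau(S(B_i),x)$, and the two inequalities collapse to $\tau(S(B_1),x)<\tau(S(B_2),x)<\tau(S(B_1),x)$, which is absurd. Thus $u$ has at most one non-pendent neighbour, and it cannot have none (else $\widetilde T$ is a star with $\deg(u)=n-1\notin[2,d]$), so precisely $\deg(u)-1$ of its neighbours are pendent.

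For the second assertion, $v\neq u$ has a pendent neighbour, so Corollary~\ref{cor27} gives $\deg(v)=d+1$, whence $v$ has $k\ge1$ pendent and $d+1-k\ge2$ non-pendent neighbours. To see that no other vertex can have an intermediate pendent count, suppose some $w\neq u,v$ also has between $1$ and $d-1$ pendent neighbours; then $w$ is full too, and with hubs $v,w$ and $T_0$ the $v$--$w$ path each hub keeps a pendent branch ($\tau=1$) and a non-pendent branch ($\tau<1$) off the path. Orienting Theorem~\ref{exchange0} so that $M_{10}\le M_{01}$ and invoking optimality forces its equality case; but either alternative makes all branches at one hub pendent, contradicting that each hub carries a genuine non-pendent branch. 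Hence every $w\neq u,v$ has $0$ or $d$ pendent neighbours.

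It remains to prove $uv\in E(\widetilde T)$, i.e. that $v$ coincides with the unique non-pendent neighbour $p$ of $u$ produced above. This is the delicate step and the main obstacle: the obvious exchange on the pair $(u,v)$ is vacuous, since all branches of $u$ off $p$ are pendent and hence already $\tau$-maximal, so nothing is gained. I would therefore pivot to $p$. If $p\neq v$, the second assertion gives $p$ either $d$ or $0$ pendent neighbours. The case of $d$ pendent neighbours is impossible: then $u$ is the only non-pendent neighbour of $p$, so $\{u,p\}$ together with their leaves forms a whole component of $\widetilde T$, leaving no room for $v$. In the remaining case $p$ has no pendent neighbour, so taking hubs $v,p$ with $T_0$ the $v$--$p$ path (which avoids the dead-end $u$), a pendent branch at $v$ against only non-pendent branches at $p$ lets Corollary~\ref{exchange} apply and yield $\min_i\tau(S(L_i),x)\ge\max_i\tau(S(R_i),x)$, the $L_i$ being the branches of $v$ and the $R_i$ those of $p$. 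Since the pendent star $U$ formed by $u$ and its $\deg(u)-1$ leaves is one of the $R_i$, this gives $\tau(S(U),x)\le\min_i\tau(S(L_i),x)$. But $\tau(S(U),x)=\frac{1+x}{1+\deg(u)x}\ge\frac{1+x}{1+dx}$, whereas any branch rooted at a full vertex has $\tau\le\frac{1+x}{1+(d+1)x}<\frac{1+x}{1+dx}$ (its value being largest when all $d$ children are leaves); as $v$ does carry a non-pendent branch off the path, this contradicts the previous inequality. Therefore $p=v$ and $uv\in E(\widetilde T)$.
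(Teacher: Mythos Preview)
Your argument is correct. The route for the second and third assertions is essentially the paper's: you compare the two ``bad'' hubs against each other via Corollary~\ref{exchange} and read off a contradiction from the presence of both a pendent branch ($\tau=1$) and a non-pendent branch ($\tau<1$) at one of them. For $uv\in E(\widetilde T)$ you pivot to the unique non-pendent neighbour $p$ of $u$, exactly as the paper does (its $z$), and your contradiction via $\tau(S(U))\ge\frac{1+x}{1+dx}>\frac{1+x}{1+(d+1)x}$ is the mirror image of the paper's ``$\min L\ge\max R=1$ but $\tau(S(L_1))<1$''.

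The genuine divergence is in the first assertion. The paper avoids your fixed-point machinery entirely: it picks $y$ the \emph{farthest} non-pendent vertex from $u$, so that all branches of $y$ off the $u$--$y$ path are automatically $C_1$; comparing $y$ to $u$ via Corollary~\ref{exchange}, the fact that $\deg(u)-1<d$ rules out one equality alternative and forces every branch of $u$ to equal $C_1$ directly. This is shorter and needs nothing beyond Lemma~\ref{subgraph}. Your approach instead runs a local contraction argument: from the two inequalities $\tau(S(B_1))\le f(\tau(S(B_2)))$ and $\tau(S(B_2))\le f(\tau(S(B_1)))$, together with the uniform lower bound $\tau(S(B))>t^\ast$ and $f(t)<t$ for $t>t^\ast$, you squeeze out a contradiction. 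This is more work but is self-contained at $u$ and its neighbours---no global ``farthest vertex'' needs to be located---and the lower bound $\tau>t^\ast$ for all finite $d$-ary branches is a clean fact that could be reused elsewhere. One minor remark: when you write ``$T_0$ the $v$--$p$ path'' you of course mean the subtree of $\widetilde T$ spanned between $v$ and $p$ together with any side branches along it, since that is what the decomposition in Theorem~\ref{exchange0} requires.
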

%Then $T^*$ satisfies one of the followings:\\
%$(i)$ There is only one vertex $v$ in $T^*$ such that
%$$2\le deg(v)\le d,$$
%$v$ has just one non-leaf neighbor and no other vertex has leaf neighbors as well as non-leaf neighbors.\\
%$(ii)$ For every vertex $u\in V(T^*)$, $deg(u)\in\{1,~d+1\}$ and there is only one vertex $v$ in $T^*$  with branches $T_1,...,T_{d+1}$ such that $T_i\in \{C_1,C_2\},~1\le i\le d$ not all of which are $C_1$ or $C_2$ and $T_{d+1}$ contains $C_3$ or $T_{d+1}=C_2$.\\
%$(iii)$ For every vertex $w\in V(T^*)$, $deg(v)\in\{1,~d+1\}$ and every vertex in $T^*$ with leaf neighbors has $d$ leaf neighbors.

\begin{proof} By Corollary~\ref{cor27} that $u$ is a unique vertex. Let $y$ be the farthest non-pendent vertex from $u$ in $\widetilde{T}$. Then there is exact one non-pendent vertex in $N(y)$  and  $\widetilde{T}$ can be decomposed as
$T_0$, $T_1$ and $T_2$ (see Fig.1), where $T_1$ has root $y$ with the branches $C_1, \ldots, C_1$ and $T_2$ has  root $u$ with the branches $L_1, \cdots, L_{deg(u)-1}$.
By Lemma~\ref{subgraph}, $\tau(S(C_1), x)\ge \tau(S(L_j), x) $ with equality if and only if $L_j=C_1$ for $1\le j\le deg(u)-1$.  By Corollary~\ref{exchange} and $deg(u)-1<d$,
we have $\tau(S(C_1), x)=\tau(S(L_j), x) $. Hence $L_j=C_1$ for $1\le j\le deg(u)-1$. Hence there are  $deg(u)-1$ pendent vertices in $N(u)$.

 Suppose that $uv\notin E(\widetilde{T})$. Then there exists a vertex $z$  with $uz\in E(\widetilde{T})$ such that $\widetilde{T}$ can be decomposed as $T_0$, $T_1$ and $T_2$, where $
T_0$ contains vertices $z, v$, not $u$, $T_1$ has root $z$ and the branches $L_1$ containing $u$,  $L_2, \ldots, L_d$, and $T_2$ has root $v$ and the branches $R_1$ containing $C_2$, $R_2=C_1$, $R_3, \cdots, R_d$. By Lemma~\ref{subgraph},  $\tau(S(L_1), x)>\tau(S(C_2), x)\ge \tau(S(R_1), x)$. Hence by Corollary~\ref{exchange},
$$1>\tau(S(L_1),x)\ge \min \{\tau(S(L_i),x),~1\le i\le d\}\ge \max\{\tau(S(R_i),x),~1\le i\le d\}=1,$$
which is a contradiction. So $uv\in E(\widetilde{T})$.
Suppose that there exists another  vertex $w\neq u,v $ such that  there are $1\le p\le d$ pendent vertices  in $N(w)$.  Then  $\widetilde{T}$ can be decomposed as  trees $T_0$, $T_1$ and $T_2$, where $
T_0$ contains vertices $ v, w$,  $T_1$ has root $v$ and the branches $L_1=C_1$, $L_2\neq C_1$, $L_3, \cdots, L_d$, and $T_2$ has root $w$ and the branches $R_1=C_1$, $R_2\neq C_1$, $R_3,  \cdots, R_d$. Clearly $\tau(S(L_1), x)>\tau(S(R_2), x)$. Hence
$$1>\min \{\tau(S(L_i),x),~1\le i\le d\}\ge \max\{\tau(S(R_i),x),~1\le i\le d\}=1,$$
which is a contradiction.
 Hence  for any $w\neq u, v,$ there are $d$ pendent vertices or no pendent vertices in  $N(w)$.
\end{proof}

\begin{lemma}\label{degeled}
Let $S(\widetilde{T})$ be an optimal tree  in $\mathcal{S(T)}_{n, d+1}$. If  there is a vertex $v$ of $V(\widetilde{T})$ with degree $2\le deg(v)\le d$, then $\widetilde{T}$ is greedy tree $T_{d+1}^*$.
\end{lemma}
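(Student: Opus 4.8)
The plan is to prove that $\widetilde T$ is forced to satisfy the three defining properties of the greedy tree $T_{d+1}^*$; since those properties determine a unique tree of order $n$ with maximum degree $d+1$ up to isomorphism, this identifies $\widetilde T$ with $T_{d+1}^*$. First I would record the skeleton that the earlier results already supply. By Corollary~\ref{cor27} the vertex $u$ with $2\le \deg(u)\le d$ is unique, so every internal vertex of $\widetilde T$ other than $u$ has degree exactly $d+1$ and every leaf has degree $1$. By Lemma~\ref{exchange-cor-d}, $u$ has exactly $\deg(u)-1$ pendent neighbours and a single non-pendent neighbour, there is at most one further vertex $v$ (necessarily adjacent to $u$) whose number of pendent neighbours lies strictly between $0$ and $d$, and every remaining vertex has either $0$ or $d$ pendent neighbours. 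This fixes the degree sequence and the attachment of the leaves; the work that remains is to control the global height profile, since these local conditions alone do \emph{not} forbid two leaves whose heights differ by $2$ or more.

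I would root $\widetilde T$ at a degree-$(d+1)$ vertex $v_0$ chosen to lie as far as possible from the incomplete part (the vertices $u$ and the possible $v$), so that $u$ and $v$ are driven towards the bottom. With respect to this root every non-root internal vertex has exactly $d$ children, except $u$, which has $\deg(u)-1<d$ children, all of them leaves; property~(1), $\deg(v_0)=d+1$, is then immediate from the degree information above, with the few small-$n$ configurations (where no interior degree-$(d+1)$ vertex exists) dispatched directly. The heart of the argument is property~(2): that the heights of any two pendent vertices differ by at most $1$.

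For property~(2) I would argue by contradiction, starting from a shallowest leaf $q$ of height $h$ and a deepest leaf $p$ of height $H\ge h+2$. The aim is to exhibit two non-pendent vertices whose branch profiles violate Corollary~\ref{exchange}. By Lemma~\ref{subgraph} and the strict monotonicity of $\tau(S(C_h),x)$ in $h$ furnished by Lemma~\ref{complete tree}, a strictly taller subtree carries strictly smaller $\tau$. Thus if one chosen vertex has, among its branches, both a short subtree on $q$'s side and a tall subtree on $p$'s side, while the second chosen vertex contributes a branch of \emph{intermediate} height, then feeding the two as the roots of the decomposition of Fig.~1 makes Corollary~\ref{exchange} force $\min\{\tau(S(L_i),x)\}\ge\max\{\tau(S(R_i),x)\}$; calibrated so that the $\tau$ of the tall subtree would have to dominate that of the intermediate branch, this is impossible. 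The contradiction has the same shape $1>\tau(\cdots)\ge\cdots=\max\{\cdots\}$ as the one used in the proof of Lemma~\ref{exchange-cor-d}, and it rules out leaf-height spread of at least $2$.

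Property~(3), that at most one child-subtree of any vertex is an incomplete $d$-ary tree, I would obtain similarly: two distinct incomplete (hence strictly shorter, larger-$\tau$) branches sitting at two vertices of the same level would once more violate the forced $\min$--$\max$ ordering of Corollary~\ref{exchange}, so the incompleteness is concentrated at the single vertex $u$ together with its neighbour $v$. With (1)--(3) established, uniqueness of the greedy tree yields $\widetilde T=T_{d+1}^*$. I expect the main obstacle to be property~(2): the naive comparison (the parent of $q$, all of whose children are leaves, against an ancestor of $p$, all of whose children are internal) does \emph{not} produce a contradiction, because $\min\{\tau\}\ge\max\{\tau\}$ is then satisfied. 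The real difficulty is therefore the calibration — producing a second vertex whose extreme branch has exactly the intermediate height needed to flip the inequality — together with the boundary case where the leaf heights differ by precisely $2$ (where such an intermediate vertex is hardest to locate), all while keeping the two distinguished vertices $u$ and $v$ of Lemma~\ref{exchange-cor-d} from contaminating the exchange.
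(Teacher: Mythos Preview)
Your proposal correctly extracts the local skeleton from Corollary~\ref{cor27} and Lemma~\ref{exchange-cor-d}, and your overall strategy of verifying the three defining properties of $T_{d+1}^*$ is sound in principle. However, there is a genuine gap: your own last paragraph already pinpoints it. For property~(2) you need a pair of vertices whose branch $\tau$-values are ``calibrated'' so that Corollary~\ref{exchange} yields a contradiction, and you concede you do not know how to manufacture the required vertex of intermediate height. Without that, the contradiction you describe (``$1>\tau(\cdots)\ge\cdots$'') does not materialise, and the argument stalls precisely at its hardest step. The same difficulty recurs for property~(3): ``two incomplete branches at the same level'' is not by itself enough input for Corollary~\ref{exchange}, which compares branches at \emph{two different} vertices and needs a strict $\tau$-inequality on at least one pair to get started.

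The paper supplies exactly the calibration device you are missing. Instead of rooting far from the defective vertex, it takes a \emph{longest path $P$ one of whose ends is the defective vertex $v$}, say $P=v_1v_2\cdots v_{k+1}w_k\cdots w_1w_0$ with $v_1=v$. Because $P$ is longest and terminates at a leaf $w_0$, the far side of $P$ is forced at the base: the branches of $\widetilde T-w_2$ away from $v$ are all $C_2$. One then proves, by induction on $t$ along the path, the three simultaneous statements
\[
\tau(S(C_t),x)<\tau(S(L_1^t),x)<\tau(S(C_{t-1}),x),\qquad
L_i^t\in\{C_{t-1},C_t\},\qquad
R_1^t=\cdots=R_d^t=C_t,
\]
where $L_i^t$ are the branches at $v_t$ towards $v$ and $R_i^t$ those at $w_t$ towards $w_0$. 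The third statement is the calibration you lacked: once you know that at level $t-1$ (and $t-2$) the $w$-side is \emph{exactly} $C_{t-1}$ (resp.\ $C_{t-2}$), you can compare any branch on the $v$-side against $w_{t-1}$ or $w_{t-2}$ via Corollary~\ref{exchange} and squeeze its $\tau$ between $\tau(S(C_t),x)$ and $\tau(S(C_{t-1}),x)$, hence between $C_t$ and $C_{t-1}$. A further nested induction (Claims~2.1--2.4 in the paper) pushes this squeeze down through the subtrees on the $v$-side and forces each to be a complete $C_{t}$ or $C_{t-1}$. The longest-path choice is what makes both the base case and the availability of the comparison vertices $w_j$ automatic; your rooting ``far from the incomplete part'' does not provide an analogous supply of reference vertices with known complete branches.
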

\begin{proof} By Corollary~\ref{cor27}, the degree of any vertex $u\neq v$ is $d+1$ or 1.
Let $P$ be a longest path with end vertex $v$ in $\widetilde{T}$.
If the length of $P$ is odd, denote $P=v_1v_2\cdots v_k v_{k+1}w_k\cdots w_1w_0$ with $v_1=v$.
Moreover, let $L_{1}^t, \ldots, L_{d}^t, L_{d+1}^t$  be the $d+1$ branches of $\widetilde{T}-v_t$  such that $L_{1}^t $ contains $v_1$ but no $w_1$; $L_{2}^t, \ldots, L_{d}^t$ do not contain $v_1$ and $w_1$, $L_{d+1}^t$ contains $w_1$ but no $v_1$ for  $t=2, \ldots, k+1$.
Similarly, let  $R_{1}^t, \ldots,  R_{d}^t, R_{d+1}^t $ be the $d+1$ branches of $\widetilde{T}-w_t$ such that $R_{1}^t $ contains $w_1$  but no $v_1$; $R_{2}^t,\ldots, R_{d}^t$ do not contain $v_1$ and $w_1$, $R_{d+1}^t$ contains $v_1$ but no $w_1$  for  $t=2, \cdots, k$ (See Fig.~2). Then we have the following Claim.

$$
\begin{tikzpicture}[scale=0.7]\label{fig2}
\coordinate[label=$v_1$] (I) at (4cm,4.3cm);
\coordinate[label=$v_2$] (I) at (7cm,4.3cm);
\coordinate[label=$v_k$] (I) at (10cm,4.3cm);
\coordinate[label=$v_{k+1}$] (I) at (13cm,4.3cm);
\coordinate[label=$w_k$] (I) at (16cm,4.3cm);
\coordinate[label=$w_2$] (I) at (19cm,4.3cm);
\coordinate[label=$w_1$] (I) at (22cm,4.3cm);
\coordinate[label=$\vdots$] (I) at (3cm,3.5cm);
\coordinate[label=$\vdots$] (I) at (23cm,3.5cm);
\coordinate[label=$L_{1}^2$] (I) at (3.2cm,1.5cm);
\coordinate[label=$R_{1}^2$] (I) at (22.8cm,1.5cm);
\coordinate[label=$L_{1}^k$] (I) at (3.2cm,-1cm);
\coordinate[label=$R_{1}^k$] (I) at (22.8cm,-1cm);
\coordinate[label=$L_{1}^{k+1}$] (I) at (3.4cm,-2.5cm);
\coordinate[label=$L_{d+1}^{k+1}$] (I) at (22.3cm,-2.5cm);
\coordinate[label=$L_{2}^2$] (I) at (6cm,0.3cm);
\coordinate[label=$R_{d}^2$] (I) at (18.3cm,0.3cm);
\coordinate[label=$L_{d}^2$] (I) at (7.9cm,0.3cm);
\coordinate[label=$R_{2}^2$] (I) at (20cm,0.3cm);
\coordinate[label=$L_{2}^k$] (I) at (9.1cm,0.3cm);
\coordinate[label=$R_{d}^k$] (I) at (15.1cm,0.3cm);
\coordinate[label=$L_{d}^k$] (I) at (10.9cm,0.3cm);
\coordinate[label=$R_{2}^k$] (I) at (16.9cm,0.3cm);
\coordinate[label=$L_{2}^{k+1}$] (I) at (12cm,-2.5cm);
\coordinate[label=$L_{d}^{k+1}$] (I) at (14cm,-2.5cm);
\coordinate[label=$\cdots$] (I) at (7cm,2cm);
\coordinate[label=$\cdots$] (I) at (10cm,2cm);
\coordinate[label=$\cdots$] (I) at (13cm,2cm);
\coordinate[label=$\cdots$] (I) at (16cm,2cm);
\coordinate[label=$\cdots$] (I) at (19cm,2cm);
\fill[black]
(3cm,5cm) circle(1mm)
(3cm,3cm) circle(1mm)
(4cm,4cm) circle(1mm)
(7cm,4cm) circle(1mm)
(10cm,4cm) circle(1mm)
(13cm,4cm) circle(1mm)
(16cm,4cm) circle(1mm)
(19cm,4cm) circle(1mm)
(22cm,4cm) circle(1mm)
(23cm,5cm) circle(1mm)
(23cm,3cm) circle(1mm);
\draw[-]
(4cm,4cm)--(3cm,5cm)
(4cm,4cm)--(3cm,3cm)
(22cm,4cm)--(23cm,5cm)
(22cm,4cm)--(23cm,3cm)
(4cm,4cm)--(7cm,4cm)
(10cm,4cm)--(13cm,4cm)--(16cm,4cm)
(19cm,4cm)--(22cm,4cm)
(7cm,4cm)--(6cm,3cm)--(5.5cm,1.5cm)--(6.4cm,1.5cm)--(6cm,3cm)
(7cm,4cm)--(8cm,3cm)--(7.5cm,1.5cm)--(8.4cm,1.5cm)--(8cm,3cm)
(10cm,4cm)--(9cm,3cm)--(8.6cm,1.5cm)--(9.5cm,1.5cm)--(9cm,3cm)
(10cm,4cm)--(11cm,3cm)--(10.5cm,1.5cm)--(11.4cm,1.5cm)--(11cm,3cm)
(13cm,4cm)--(12cm,3cm)--(11.6cm,-1.2cm)--(12.5cm,-1.2cm)--(12cm,3cm)
(13cm,4cm)--(14cm,3cm)--(13.5cm,-1.2cm)--(14.4cm,-1.2cm)--(14cm,3cm)
(16cm,4cm)--(15cm,3cm)--(14.6cm,1.5cm)--(15.5cm,1.5cm)--(15cm,3cm)
(16cm,4cm)--(17cm,3cm)--(16.5cm,1.5cm)--(17.4cm,1.5cm)--(17cm,3cm)
(19cm,4cm)--(18cm,3cm)--(17.6cm,1.5cm)--(18.5cm,1.5cm)--(18cm,3cm)
(19cm,4cm)--(20cm,3cm)--(19.5cm,1.5cm)--(20.4cm,1.5cm)--(20cm,3cm);
\draw[dashed]
(7cm,4cm)--(10cm,4cm)
(16cm,4cm)--(19cm,4cm)
(4.5cm,5.3cm)--(2.7cm,5.3cm)--(2.7cm,2.7cm)--(4.5cm,2.7cm)--cycle
(23.3cm,5.3cm)--(21.5cm,5.3cm)--(21.5cm,2.7cm)--(23.3cm,2.7cm)--cycle
(8.5cm,5.4cm)--(2.6cm,5.4cm)--(2.6cm,0cm)--(8.5cm,0cm)--cycle
(17.5cm,5.4cm)--(23.5cm,5.4cm)--(23.5cm,0cm)--(17.5cm,0cm)--cycle
(11.5cm,5.5cm)--(2.5cm,5.5cm)--(2.5cm,-1.2cm)--(11.5cm,-1.2cm)--cycle
(14.5cm,5.5cm)--(23.5cm,5.5cm)--(23.5cm,-1.2cm)--(14.5cm,-1.2cm)--cycle;
\end{tikzpicture}$$
$$Fig.~2~~~\mbox{ tree $\widetilde{T}$}$$
{\bf Claim :}
\begin{itemize} \setlength{\itemsep}{-\itemsep}
\item (i).
 $\tau(S(C_t),x)<\tau(S(L_{1}^t),x)< \tau(S(C_{t-1}),x),~2\le t\le k+1$.
%\item (2). There  does  not exist   vertex $u\in V(L_{d+1}^t)$  such that there are two branches $C_{t-1}$ and $B_t\neq C_{t-1}$ containing no $v_t$ in $T_{d+1}^*-u$, for $~2\le t\le k+1.$
\item (ii). $L_{i}^t=C_{t-1}$ or $L_{i}^t=C_{t},~2\le t\le k+1,~2\le i\le d$;
\item  (iii). $R_{1}^t=R_2^{t}=\cdots=R_{d}^t=C_t,~2\le t\le k$.
\end{itemize}

 We prove the claim by the induction on $t$.  For $t=2$, by Lemma~\ref{complete tree}, we have
\begin{eqnarray*}
\tau(S(C_1),x)=1> \tau(S(L_{1}^2),x)&=&\frac{1}{1+\frac{xdeg(v)}{1+x}}
>\frac{1}{1+\frac{dx}{1+x}}=\tau(S(C_2),x)=\tau(S(R_{1}^2),x).
\end{eqnarray*}
So (i) holds. By Lemma~\ref{exchange-cor-d}, for any vertex $w\in V(L_{d+1}^2)$, there are $d$ pendent vertices or no pendent vertices in $N(w)$. Thus $R_{1}^2=C_2$, which implies that $\tau(S(L_{1}^2),x)>\tau(S(R_{1}^2),x)$. Hence Corollary~\ref{exchange}, we have $\tau(S(L_{j}^2),x)\ge \tau(S(C_2),x)$ for $j=2, \cdots, d$. Therefore by Lemmas~\ref{subgraph} and \ref{exchange-cor-d}, we have $L_j^2=C_1$ or $C_2$  for $j=2, \cdots, d$.  So (ii) holds. Since $P$ is a longest path with ends $v_1$ and $w_0$, the distance between any vertex in $R_j^2$ and $w_2$ is no more than 2. By $\tau(S(R_j^2),x)\le \tau(S(L_1^2),x)$ and  Lemma~\ref{exchange-cor-d}, we have $R_j^2=C_2$ for $j=2, \cdots, d$. So (iii) holds.

Suppose that the claim holds for  less than $t$. By the induction prothesis,
$\tau(S(C_{t-1}),x)< \tau(S(L_{1}^{t-1}),x)< \tau(S(C_{t-2}),x)$
and $\tau(S(C_{t-1}),x)\le \tau(S(L_{j}^{t-1}),x)\le \tau(S(C_{t-2}),x)$  for $j=2, \cdots, d$. It follows from   $(\ref{LM-tau})$ and (\ref{ch-ch-1}) that
\begin{eqnarray*}
\tau(S(C_{t}),x)&=&
\frac{1}{1+\sum_{i=1}^d\frac{x}{1+x\tau(S(C_{t-1}),x)}}
< \frac{1}{1+\sum_{i=1}^d\frac{x}{1+x\tau(S(L_{i}^{t-1}),x)}}\\
&=&\tau(S(L_{1}^t),x)
<\frac{1}{1+\sum_{i=1}^d\frac{x}{1+x\tau(S(C_{t-2}),x)}}
=\tau(S(C_{t-1}),x)
\end{eqnarray*}
 Hence (i) holds for $t$.
 In order to prove (ii) holding for $t$, we first prove the following several claims.

 {\bf Claim~2.1:} $\tau(S(C_t), x)\le \tau(S(L_i^t), x)\le \tau(S(C_{t-1}, x)$ for $i=2, \cdots, d$.

  In fact, there are $d$ branches $L_1^t, \cdots, L_d^t $ containing no $w_t$ in $\widetilde{T}-v_t$ and there are $d$ branches $R_1^t=C_t, R_2^t,\cdots, R_d^t$ containing no $v_t$ in
     $\widetilde{T}-w_t$. By  Claim (i),  we have $\tau(S(C_t),x)<\tau(S(L_{1}^t),x)$. On the other hand, by the induction hypothesis, $R_1^{t-1}=\ldots=R_d^{t-1}=C_{t-1}$ which implies $R_1^t=C_t$.  Hence by Corollary~\ref{exchange}, $\min\{\tau(S(L_1^t), x), \cdots, \tau(S(L_d^t), x)\}\ge
\tau(S(C_t),x)$.  Further  there are $d$ branches $L_1^t, \cdots, L_d^t $ containing no $w_{t-1}$ in $\widetilde{T}-v_t$ and there are $d$ branches $R_1^{t-1}=C_{t-1}, \ldots, R_d^{t-1}=C_{t-1}$ containing no $v_t$ in      $\widetilde{T}-w_{t-1}$.  By Claim (i), $\tau(S(L_{1}^t),x)<\tau(S(C_{t-1}),x)=\tau(S(R_1^{t-1}),x).$ Hence by Corollary~\ref{exchange}, $\max\{S(L_{2}^t),x), \cdots, S(L_{d}^t),x)\}\le \tau(S(C_{t-1}),x).$ So Claim~2.1 holds.

 Let  $l+1$ be the maximum distance  between $v_t$ and any vertex in $\bigcup_{i=1}^tV(L_i^t)$. Then $l\ge t-1$.  Denote by
 $$V_j=\{u\ |\  dist(u, v_t)=l-j+1, u\in \bigcup_{i=1}^d V(L_i^t)\},\  j=0, \ldots, l.$$

     {\bf Claim 2.2:} For any $u\in V_{l-j}$, there are $d$ branches
     $L_1^u, \cdots, L_d^u$ containing no $v_t$ in $\widetilde{T}-u$ such that
     \begin{equation}
     \tau(S(C_{t-j-1}), x)\le \tau(S(L_i^u), x) \le \tau(S(C_{t-j-2}), x)
     , \ \  i=1, \cdots, d,
     \end{equation} where $j=0, \cdots, \min\{t, l\}-2$.

     We prove Claim~2.2 by the induction on $j$.  Let  $L_1^u, \cdots, L_d^u$  be $d$ the branches
    containing no $v_{t}$ in $\widetilde{T}-u$ and  $T^u$ be the subtree of $T$ consisting of $u$ and $L_1^u, \cdots, L_d^u$. For $j=0$,
there clearly exists an $1\le p\le d$ such that $T^u=L_p^t$.   If there exists an $1\le i\le d$ such that  $ \tau(S(L_i^u), x) < \tau(S(C_{t-1}), x)$, let    $R_1^{t-1}=C_{t-1}, \cdots, R_d^{t-1}$  be $d$ the branches
    containing no $v_t$ in $\widetilde{T}-w_{t-1}$. Hence by Corollary~\ref{exchange},
    $\max\{  \tau(S(L_1^u), x), \cdots,  \tau(S(L_d^u), x)\}\le  \tau(S(C_{t-1}), x)$.
    Then
    $$ \tau(S(L_p^{t}), x)=\tau(S(T^u), x)=\frac{1}{1+\sum_{q=1}^d\frac{x}{1+x\tau(S(L_q^u), x)}}
    <\frac{1}{1+\sum_{q=1}^d\frac{x}{1+x\tau(S(C_{t-1}), x)}}= \tau(S(C_{t}), x),$$
     which contradicts  to Claim~2.1. Therefore,
     $$ \tau(S(C_{t-1}), x)\le \tau(S(L_i^u), x), i=1, \ldots, d.$$
     On the other hand, if there exists $1\le i\le d$ such that
     $ \tau(S(L_i^u), x) > \tau(S(C_{t-2}), x)$, let    $R_1^{t-2}=C_{t-2}, \cdots, R_d^{t-2}$  be $d$ the branches
    containing no $v_t$ in $\widetilde{T}-w_{t-2}$. By
    Corollary~\ref{exchange},
   $$\min\{  \tau(S(L_1^u), x), \cdots,  \tau(S(L_d^u), x)\}\le  \tau(S(C_{t-2}), x).$$
    Then
      $$ \tau(S(L_p^{t}), x)=\tau(S(T^u), x)=\frac{1}{1+\sum_{q=1}^d\frac{x}{1+x\tau(S(L_q^u), x)}}
    >\frac{1}{1+\sum_{q=1}^d\frac{x}{1+x\tau(S(C_{t-1}), x)}}= \tau(S(C_{t-1}), x),$$
     which contradicts to Claim~2.1. Hence Claim~2.2 holds for $j=0$.
     Now assume that Claim~2.2 holds for $j$ and consider the claim for $j+1$.
     For any $u\in V_{l-(j+1)}$,  let
      $L_1^u, \ldots, L_d^u$  be $d$ the branches containing no $v_t$ in $\widetilde{T}-u$  and  $T^u$ be the subtree  consisting of $u$ and $L_1^u, \ldots, L_d^u$.
Clearly there exists a  $u^{\prime}\in V_{l-j}$ such that there exists a branch $L_1^{u^{\prime}}$ in $T_{d+1}^*-u^{\prime}$   such that $T^u=L_1^{u^{\prime}}$.
     If there exists an $1\le i\le d$ such that  $ \tau(S(L_i^u), x) < \tau(S(C_{t-j-2}), x)$,    let $R_1^{t-j-2}=C_{t-j-2}, \cdots, R_d^{t-j-2}$  be $d$ the branches
    containing no $u$ in $\widetilde{T}-w_{t-j-1}$. By Corollary~\ref{exchange},
    $$\max\{  \tau(S(L_1^u), x), \cdots,  \tau(S(L_d^u), x)\}\le  \tau(S(C_{t-j-2}), x).$$
          Then
    $$ \tau(S(L_1^{u^{\prime}}), x)=\tau(S(T^{u}), x)=\frac{1}{1+\sum_{q=1}^d\frac{x}{1+x\tau(S(L_q^u), x)}}
    <\frac{1}{1+\sum_{q=1}^d\frac{x}{1+x\tau(S(C_{t-j-2}), x)}}= \tau(S(C_{t-j-1}), x),$$
     which contradicts to the induction hypothesis. Therefore, for any $u\in V_{l-j-1}$,
     $$ \tau(S(C_{t-j-2}), x)\le \tau(S(L_i^u), x), i=1, \cdots, d.$$
     On the other hand, if there exists $1\le i\le d$ such that
     $ \tau(S(L_i^u), x) > \tau(S(C_{t-j-3}), x)$.   Let    $R_1^{t-j-3}=C_{t-j-3}, \cdots, R_d^{t-j-3}$  be $d$ the branches
    containing no $u$ in $\widetilde{T}-w_{t-j-2}$. By Corollary~\ref{exchange},
    $$\min\{  \tau(S(L_1^u), x), \cdots,  \tau(S(L_d^u), x)\}\ge  \tau(S(C_{t-j-3}), x).$$
    Then
      $$ \tau(S(L_1^{u^{\prime}}), x)=\tau(S(T^u), x)=\frac{1}{1+\sum_{q=1}^d\frac{x}{1+x\tau(S(L_q^u), x)}}
    >\frac{1}{1+\sum_{q=1}^d\frac{x}{1+x\tau(S(C_{t-j-3}), x)}}= \tau(S(C_{t-j-2}), x),$$
     which contradicts to the induction hypothesis. Hence Claim~2.2 holds for $j+1$.
          Therefore Claim~2.2 holds.

   {\bf Claim~2.3:} $l=t-1$.
   If $l>t-1$, by Claim 2.2, for any $u\in V_{l-t+2},$
   $$  \tau(S(C_{1}), x) \le \tau(S(L_i^u),x), \ i=1, \cdots, d.$$
   On the other hand, there exists a $u^{\prime}\in V_{l-t+2}$ such that
   the largest distance between $u^{\prime}$ and the pendent vertex  is at least 2, then $C_2$ is a proper subgraph $L_1^{u^{\prime}}$, which implies   $\tau(S(L_1^{u^{\prime}}), x)\le  \tau(S(C_{2}), x)$. It is a contradiction.
   Hence $l\le t-1$.  In addition $l\ge t-1$, we have $l=t-1$.

    {\bf Claim 2.4:} For any $u\in V_{t-j-1}, ~j=0, \cdots, t-3$. Let $L_1^u, \cdots, \cdots, L_d^u$ be the $d$ branches containing no $v_t$ in $\widetilde{T}-u$ and $T^u$ consist of $u$ and $d$ branches  $L_1^u, \cdots, \cdots, L_d^u$. Then
    $L_1^u=\cdots=L_d^u=C_{t-j-1}$ or $L_1^u=\cdots=L_d^u=C_{t-j-2}$, i.e.,  $T^u=C_{t-j}$ or $T^u=C_{t-j-1}$.

     We prove Claim 2.4 by the induction for $t-j-1$. In fact, for $j=t-3$ and $u\in V_{2},$  by Claim 2.2, $\tau(S(C_{2}), x)\le \tau(S(L_i^u), x)\le \tau(S(C_{1}), x) $ for $ i=1, \cdots, d$. Hence
    $L_i^u=C_2$ or $L_i^u=C_1$ for $ i=1, \cdots, d$. If, say $L_1^u=C_2$ and $L_2^u=C_1$, then by  $\tau(S(L_1^2), x)>\tau(S(L_1^u),x)$ and Corollary~\ref{exchange},
    $\tau(S(L_1^2), x)\ge \max\{\tau(S(L_1^u), x), \cdots, \tau(S(L_d^u), x)\}\ge \tau(S(C_1), x)$, which is a contradiction. Hence $L_1^u=\cdots=L_d^u=C_{2}$ or
    $L_1^u=\cdots=L_d^u=C_{1}$, i.e.,  $T^u=C_{3}$ or $T^u=C_{2}$ for $u\in V_2$.
    Assume that Claim 2.4 hold for any vertex in $ V_{t-j-2}$. Now for $u\in   V_{t-j-1}$.
    Let $z_1, \cdots, z_d\in V_{t-j-1}$ be the roots of $L_1^u, \cdots, L_d^u$,
     respectively. By the induction hypothesis, $L_1^u, \cdots, L_d^u\in \{C_{t-j-1}, C_{t-j-2}\}$. Further $L_1^u=\cdots= L_d^u= C_{t-j-1}$ or $L_1^u=\cdots= L_d^u= C_{t-j-2}$. In fact, if, say  $L_1^u=C_{t-j-1}$ and $ L_2^u=C_{t-j-2}$, By
     $\tau(S(L_1^{t-j-1}),x)>\tau(S(C_{t-j-1}),x)$ and Corollary~\ref{exchange},
     $$\tau(S(L_1^{t-j-1}),x)\ge \max\{\tau(S(C_{t-j-1}),x), \tau(S(C_{t-j-2}),x)\ge \tau(S(C_{t-j-2}),x),$$
    which  contradicts to Claim 2.1.  Hence $L_1^u=\cdots= L_d^u= C_{t-j-1}$ or $L_1^u=\cdots= L_d^u= C_{t-j-2}$, i.e., $T^u=C_{t-j}$ or $T^u=C_{t-j-1}$.
So Claim 2.4 holds.

Hence $L_i^t=C_t$ or $L_i^t=C_{t-1}$ for $i=2, \cdots, d$. In other words, Claim (ii)  holds.

Similarly, we can prove Claim (iii) and  omit the detail.
It is easy from Claims that $\widetilde{T}$ is greedy tree.
If the length of $P$ is even,  we can prove this assertion by similar method. So we finish our proof.
\end{proof}

\begin{lemma}\label{d+1}
Let $S(\widetilde{T})$ be an optimal tree in $\mathcal{S(T)}_{n, d+1}$. If the degree of any vertex is 1 or $d+1$, then there exists at most one vertex $u$ such that there are $1\le k\le d-1$ pendent vertices in $N(u)$. Further suppose that there exists a vertex $u$ such that there are $1\le k\le d-1$ pendent vertices in $N(u)$. If the branches $T_1, \cdots, T_{d+1}$ of $\widetilde{T}-u$ contains no $C_3$, then $\widetilde{T}$ is greedy tree. If one of  the branches $T_1, \cdots, T_{d+1}$ of $T-u$ contains $C_3$, say $T_{d+1}\supseteq C_3$, then $T_i=C_1$ or $C_2$ for $i=1, \cdots, d$.
\end{lemma}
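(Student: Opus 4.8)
The plan is to deduce all three assertions from the exchange machinery of Theorem~\ref{exchange0} and Corollary~\ref{exchange}, establishing the three claims strictly in the stated order. Throughout, call a vertex \emph{special} if it has between $1$ and $d-1$ pendent neighbours. Since under the hypothesis every degree is $1$ or $d+1$, a special vertex has degree $d+1$ and therefore carries at least one branch equal to $C_1$ (a pendent neighbour, with $\tau(S(C_1),x)=1$) and, having at least two non-pendent neighbours, at least one branch different from $C_1$ (with $\tau(S(\cdot),x)<1$ by Lemmas~\ref{subgraph} and~\ref{complete tree}).

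\emph{At most one special vertex.} I would argue by contradiction, reproducing the final paragraph of Lemma~\ref{exchange-cor-d}. If $u$ and $w$ were both special, decompose $\widetilde T$ along the $u$--$w$ path as in Fig.~1, with $v_1=u$, $v_2=w$ and $T_0$ the connecting tree; the off-path branch families of $u$ and of $w$ each contain a $C_1$ and a non-$C_1$ (each of $u,w$ has at least two non-pendent neighbours, at most one lying on the path, and its pendent neighbours lie off the path). Choosing the labelling so that $M_{10}(S(T_0),x)\le M_{01}(S(T_0),x)$ and applying Theorem~\ref{exchange0}, optimality of $S(\widetilde T)$ forces equality in~(\ref{exchange10}). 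In either equality case the condition reads $\max_j\tau(S(A_j),x)\le\min_j\tau(S(B_j),x)$ for the two branch families $A,B$; but both families carry a $C_1$ and a non-$C_1$, so the left side is $\ge 1$ while the right side is $<1$, a contradiction. This proves the first claim.

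\emph{No branch contains $C_3$.} Here I would combine the uniqueness just proved with the hypothesis. Uniqueness forces every internal vertex $x\neq u$ to have either $0$ or $d$ pendent children, for otherwise $x$ would be a second special vertex. The absence of $C_3$ means no vertex of a branch has all $d$ children internal, i.e. every internal vertex of $\widetilde T-u$ has at least one pendent child. For $x\neq u$ these two statements force exactly $d$ pendent children, so every internal vertex of $\widetilde T-u$ roots a $C_2$; consequently each branch of $\widetilde T-u$ is $C_1$ or $C_2$. Rooting at $u$, we then have $\deg(u)=d+1$, all branches complete, and pendent vertices only at heights $1$ and $2$, so conditions (1)--(3) in the definition of $T^*_{d+1}$ hold and $\widetilde T=T^*_{d+1}$.

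\emph{One branch contains $C_3$.} Assume $T_{d+1}\supseteq C_3$ and take the deepest vertex $p$ of $T_{d+1}$ all of whose $d$ children are internal; it exists by the hypothesis. By the uniqueness from the first claim, each child of $p$ (being $\neq u$ and, by maximality of $p$, having at least one pendent child) must have all $d$ children pendent, so the children of $p$ are copies of $C_2$ and $T(p)=C_3$. Decomposing $\widetilde T$ along the $u$--$p$ path with $v_1=u$ (off-path branches $T_1,\dots,T_d$, one of which is $C_1$) and $v_2=p$ (the $d$ branches $C_2$), the strict inequality $\tau(S(C_1),x)=1>\tau(S(C_2),x)$ lets me invoke Corollary~\ref{exchange}, giving $\min_{1\le i\le d}\tau(S(T_i),x)\ge\tau(S(C_2),x)$. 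By Lemma~\ref{subgraph} this is possible only if $T_i\subseteq C_2$, i.e. $T_i\in\{C_1,C_2\}$ for each $i\le d$, as desired. The main difficulty is precisely the equality-case bookkeeping in the first claim and the verification that the deepest ``full'' vertex $p$ has exactly-$C_2$ children; both rest on uniqueness of the special vertex, which is why the order of the three arguments cannot be permuted.
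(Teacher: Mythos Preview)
Your proof is correct and follows essentially the same approach as the paper: the uniqueness argument via the exchange machinery, the ``no $C_3$'' case by showing every branch is $C_1$ or $C_2$, and the ``some $C_3$'' case by locating a vertex whose off-path branches are all $C_2$ and applying Corollary~\ref{exchange}.

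Two small differences worth noting. For the first claim the paper invokes Corollary~\ref{exchange} directly (with $L_1=C_1$ and some $R_j\neq C_1$) rather than going back to Theorem~\ref{exchange0} and its equality cases; since Corollary~\ref{exchange} already packages optimality, this is slightly cleaner and avoids your relabelling step for $M_{10}\le M_{01}$. For the third claim the paper simply asserts the existence of a vertex $w$ with an off-path branch equal to $C_2$, whereas you explicitly construct it as the deepest $p$ with all children internal and verify $T(p)=C_3$ using uniqueness of the special vertex; your version is more careful here. Finally, your closing line ``by Lemma~\ref{subgraph} this is possible only if $T_i\subseteq C_2$'' is a little compressed: the actual deduction is that if $T_i\notin\{C_1,C_2\}$ then, by the first claim, the root of $T_i$ has all $d$ children internal, so $C_2$ is a \emph{proper} rooted subtree of $T_i$ and Lemma~\ref{subgraph} gives $\tau(S(T_i),x)<\tau(S(C_2),x)$, contradicting the bound from Corollary~\ref{exchange}.
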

\begin{proof} Suppose that there exist two vertices $u, v$ such that there are $1\le p\le d-1$ and $1\le q\le d-1$  in $N(u)$ and $N(v)$, respectively. Then  $\widetilde{T}$ can be decomposed as  three subtrees $T_0$, $T_1$ and $T_2$, where $
T_0$ contains vertices $ u, v$;  $T_1$ has root $u$ and the branches $L_1=C_1$, $L_2\neq C_1$, $L_3, \cdots, L_d$; $T_2$ has root $v$ and the branches $R_1=C_1$, $R_2\neq C_1$, $R_3,  \cdots, R_d$. Clearly $\tau(S(L_1), x)>\tau(S(R_2), x)$. Hence by Corollary~\ref{exchange},
$$1>\min \{\tau(S(L_i),x),~1\le i\le d\}\ge \max\{\tau(S(R_i),x),~1\le i\le d\}=1,$$
which is a contradiction. Hence  there exists at most one vertex $u$ with $N(u)$ having $1\le k\le d-1$ pendent vertices.
Further suppose that there exists a vertex $u$ such that there are $1\le k\le d-1$ pendent vertices in $N(u)$.

  If the branches $T_1, \cdots, T_{d+1}$ of $T-u$ contains no $C_3$, it is easy to see that $T_i=C_1$ or $C_2$ for $i=1, \cdots, d+1$, which implies $\widetilde{T}$ is a greedy tree $T_{d+1}^*$.
If one of the branches $T_1, \cdots, T_{d+1}$ contains $C_3$, say $\widetilde{T}$ contains $C_3$, then there exists a vertex $w$ such that $\widetilde{T}$ can be decomposed as  trees $T_0$, $T_1$ and $T_2$, where $T_0$ contains vertices $ u, w$,  $T_1$ has root $u$ and the branches $L_1=C_1$, $L_2\neq C_1$, $L_3, \ldots, L_d$, $T_2$ has root $w$ and the branches $R_1=C_2$, $R_2$, $R_3,  \ldots, R_d$.  Clearly $\tau(S(L_1), x)>\tau(S(R_1), x)$. By Corollary~\ref{exchange},
$$\min \{\tau(S(L_i),x),~1\le i\le d\}\ge \max\{\tau(S(R_i),x),~1\le i\le d\}\ge \tau(S(C_2),x).$$
Hence $L_i=C_1$ or $C_2$ for $i=1, \cdots, d$.
\end{proof}

The proofs of the following two lemmas are similar to the proof of Lemma~\ref{degeled}, thus we omit the proof. The readers can refer to appendix.
\begin{lemma}\label{deg1ord+11}
Let $S(T)$ be an optimal tree  in $\mathcal{S(T)}_{n, d+1}$. If the degree of each vertex is 1 or $d+1$ in $V(T)$,  and   there is a vertex $v$ such that there are $1\le h\le d-1 $ pendent vertices in $N(v)$, then $T$ is greedy tree $T_{d+1}^*$.
\end{lemma}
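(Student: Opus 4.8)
The plan is to reduce to the single situation left open by Lemma~\ref{d+1} and then re-run the layered $\tau$-sorting induction used in the proof of Lemma~\ref{degeled}. By Lemma~\ref{d+1}, $v$ is the unique vertex of $T$ carrying between $1$ and $d-1$ pendent neighbours, and the branches $T_1,\dots,T_{d+1}$ of $T-v$ fall into two cases. If none of them contains a copy of $C_3$, then Lemma~\ref{d+1} already yields $T=T_{d+1}^*$ and nothing further is required. Hence I would assume that exactly one branch, say $T_{d+1}$, satisfies $T_{d+1}\supseteq C_3$; in that case Lemma~\ref{d+1} forces $T_i\in\{C_1,C_2\}$ for $i=1,\dots,d$, so the entire remaining task is to determine the internal shape of the tall branch $T_{d+1}$.

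First I would fix the spine. Let $w$ be the root of $T_{d+1}$ (the unique non-pendent neighbour of $v$ lying in that branch) and let $P$ be a longest path of $T$ with $v$ as one endpoint, so that $P$ enters $T_{d+1}$ through $w$. Along $P$ I set up the branch families $L_i^t$ and $R_i^t$ exactly as in the proof of Lemma~\ref{degeled} (cf. Fig.~2), recording at each spine vertex the $d$ branches that avoid the far endpoint. The recursion (\ref{LM-tau}), combined with the strict monotonicity $\tau(S(C_h),x)<\tau(S(C_{h-1}),x)$ from Lemma~\ref{complete tree}, then supplies the quantitative comparison between a generic hanging branch and the reference complete trees $C_t$, $C_{t-1}$.

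The engine of the argument is Corollary~\ref{exchange}: at any pair of non-pendent spine vertices, a strict inequality such as $\tau(S(L_1^t),x)>\tau(S(R_1^t),x)$ forces the min--max separation (\ref{max-min}), which is precisely the sorting that pins each hanging subtree to a complete $d$-ary tree. Mirroring Claims~2.1--2.4 of Lemma~\ref{degeled}, I would prove by induction on the distance from the far end of $P$ that every branch attached at ``depth $j$'' equals $C_{t-j}$ or $C_{t-j-1}$, using Lemma~\ref{subgraph} to bound $\tau$ of a proper subtree strictly from below and Lemma~\ref{exchange-cor-d} to control which neighbourhoods can carry pendent vertices. This shows $T_{d+1}$ is complete or near-complete $d$-ary, and together with $T_i\in\{C_1,C_2\}$ from the first step it gives that all pendent vertices of $T$ have heights differing by at most one with at most one incomplete subtree at each vertex, i.e. $T=T_{d+1}^*$. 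As in Lemma~\ref{degeled}, the odd and even cases for the length of $P$ are treated identically.

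The step I expect to be the main obstacle is propagating the sorting through the tall branch while reconciling it with the height data coming from the short branches at $v$: one must verify that the coexistence of a $C_3$-containing branch with the $C_1$/$C_2$ branches does not create pendent vertices whose heights differ by two. This is exactly the content of establishing the balanced dichotomy ``depth $j$ $\Rightarrow$ $C_{t-j}$ or $C_{t-j-1}$'' uniformly across all of $T_{d+1}$ through repeated application of Corollary~\ref{exchange}, and it is where the careful layer-by-layer bookkeeping is unavoidable.
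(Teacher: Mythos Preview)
Your proposal is correct and follows essentially the same route as the paper's proof: reduce via Lemma~\ref{d+1} to the case where one branch $T_{d+1}\supseteq C_3$ and the remaining $T_i\in\{C_1,C_2\}$, set up a spine path through $v$, and rerun the layered $\tau$-induction (Claims (1)--(3) and sub-claims 2.1--2.4) from Lemma~\ref{degeled} using Corollary~\ref{exchange} at each step. The only cosmetic difference is that the paper places $v$ at position $v_2$ on the spine (since $v$ now has full degree $d+1$ rather than degree $<d+1$), which shifts the base of the induction by one level but does not change the argument.
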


\begin{lemma}\label{case3}
Let $S(T)$ be an optimal tree  in $\mathcal{S(T)}_{n, d+1}$.  If  the degree of each vertex is 1 or $d+1$ in $V(T)$,  and  there are  $d$ pendente vertices or no pendent vertices in $N(u)$ for any $u\in V(T)$. Then $T$ is greedy tree $T_{d+1}^*$.
\end{lemma}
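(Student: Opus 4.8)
The plan is to prove Lemma~\ref{case3} by showing that, under the stated hypotheses, the optimal tree $\widetilde T$ must be a complete or near-complete $d$-ary structure, which is exactly the structure of the greedy tree $T_{d+1}^*$. The governing data here is the quantity $\tau(S(\cdot),x)$: by Lemma~\ref{complete tree} the complete $d$-ary trees satisfy the monotone chain $\tau(S(C_h),x)<\tau(S(C_{h-1}),x)$, and by Lemma~\ref{subgraph} taking a proper subtree strictly increases $\tau$. These two facts together let me compare any branch of $\widetilde T$ against the reference trees $C_h$. Since by hypothesis every vertex has degree $1$ or $d+1$ and every neighborhood contains either $d$ pendent vertices or none, the root $v_0$ has degree $d+1$ and each of its branches is itself a rooted tree in which every internal vertex again has $d$ children of the same all-pendent-or-none type.

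First I would fix a longest path $P$ from the root and set up exactly the same scaffolding of branches $L_i^t$ and $R_i^t$ along $P$ as in the proof of Lemma~\ref{degeled}, writing $\widetilde T$ in the decomposed form of Fig.~1 with the pivot pair $(u,v)$ being consecutive vertices on $P$. The key engine is Corollary~\ref{exchange}: whenever two branch-multisets hanging at the two ends of a decomposition are ``out of order'' in the sense that some $\tau(S(L_1),x)>\tau(S(R_1),x)$ while the max-min inequality \eqref{max-min} fails, optimality is contradicted. So the strategy is to run the induction from the leaves of $\widetilde T$ inward: at the deepest level, a vertex all of whose neighbors are pendent forces its subtree to be $C_2$; one level up, Corollary~\ref{exchange} combined with Lemma~\ref{subgraph} and the $\tau(S(C_h))<\tau(S(C_{h-1}))$ chain forces the $d$ branches to be either all $C_{t-1}$ or all $C_t$, i.e. the subtree rooted there is $C_t$ or $C_{t+1}$. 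This is the analogue of Claims 2.2 and 2.4 in Lemma~\ref{degeled}, and I would transcribe those double-induction arguments (outer induction on the level $t$ along $P$, inner induction on the distance $j$ into the side branches) essentially verbatim, the only simplification being that there is now no special vertex of intermediate degree $2\le\deg\le d$ to track, so every neighborhood is uniformly all-pendent-or-none.

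The conclusion I would then extract is that along the longest path the height of any two pendent vertices differs by at most one, and that at each vertex at most one child subtree fails to be a complete $d$-ary tree. The heights-differ-by-at-most-one property follows because the branch subtrees at each level are pinned to lie in $\{C_t,C_{t+1}\}$, and the ``at most one incomplete branch'' property follows because if two sibling branches were both incomplete then Corollary~\ref{exchange} applied to that sibling pair would violate \eqref{max-min} (one could strictly increase $M(S(\widetilde T),x)$ by redistributing, contradicting optimality). These are precisely conditions (2) and (3) in the definition of $T_{d+1}^*$, and condition (1) holds because $\deg(v_0)=d+1$ by hypothesis. Hence $\widetilde T=T_{d+1}^*$.

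I expect the main obstacle to be purely bookkeeping rather than conceptual: the side-branch induction (Claim~2.2/2.4 style) requires carefully indexing the reference trees $C_{t-j-1}$ and $C_{t-j-2}$ against the comparison trees $R_i^{t-j}$ pulled from the far end of the path, and making sure the applicability hypotheses of Corollary~\ref{exchange} (namely that $u,v$ are non-pendent and that the strict inequality $\tau(S(L_1),x)>\tau(S(R_1),x)$ holds) are met at every step. There is also the parity split: the cases where $P$ has odd versus even length must be handled, but as in Lemma~\ref{degeled} the even case is symmetric and can be dispatched by the same argument, so the substantive work is entirely in propagating the ``$T^u\in\{C_{t-j},C_{t-j-1}\}$'' pinning from the outermost side vertices inward to the path. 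Once that propagation is established the identification with the greedy tree is immediate from the definition.
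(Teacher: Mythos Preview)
Your outline follows the right template, but there is a real gap at the base of the induction, and it sits exactly where you claim things simplify. In the proof of Lemma~\ref{degeled} the vertex $v$ of intermediate degree is not a bookkeeping nuisance: it anchors the longest path $P$ and produces the \emph{strict} inequality $\tau(S(C_2),x)<\tau(S(L_1^2),x)<\tau(S(C_1),x)$ in Claim~(i), and that strictness is what makes Corollary~\ref{exchange} applicable at every subsequent step (the corollary requires $\tau(S(L_1),x)>\tau(S(R_1),x)$ strictly). Under the hypotheses of Lemma~\ref{case3} there is no such vertex, so there is no canonical ``root'' to take $P$ from, and if you simply run the induction from the leaves starting at $t=2$, every $L_1^2$ is exactly $C_2$, hence $\tau(S(L_1^2),x)=\tau(S(C_2),x)$, and Corollary~\ref{exchange} gives nothing. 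Your proposal never says where the first strict inequality comes from; without it the double induction cannot start.

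The paper's proof supplies this by a non-trivial preliminary step. Working from the leaf set $U$, set $U_i=\{v:\operatorname{dist}(v,U)=i\}$ and let $t_0$ be the smallest level at which some $v\in U_{t_0}$ has at least two of the $d+1$ branches of $T-v$ different from $C_{t_0}$; if no such $t_0$ exists, $T$ is already a complete $d$-ary tree and hence greedy. Otherwise take a longest path $P$ through this $v=v_s$ and first apply Corollary~\ref{exchange} at $v_s$ against $w_{t_0}$ and $w_{t_0+1}$ to force $s=t_0+1$ and to show that each $L_i^{t_0+1}\in\{C_{t_0},C_{t_0+1}\}$ with both values actually occurring. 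Only then does the main induction begin, with Claim~(1) indexed from $t=t_0+2$, and the coexistence of $C_{t_0}$ and $C_{t_0+1}$ at level $t_0+1$ furnishes the needed strict bound $\tau(S(C_t),x)<\tau(S(L_1^t),x)<\tau(S(C_{t-1}),x)$. After that, Claims~2.1--2.4 do run essentially as in Lemma~\ref{degeled}. So your intuition that the bulk of the argument carries over is right, but the beginning is genuinely different, and the missing piece is the identification of the critical level $t_0$ and the proof that $s=t_0+1$.
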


\begin{theorem}\label{matching}
$S(T_{d+1}^*) $ is only optimal tree in $\mathcal{S(T)}_{n, d+1}$. In other words,
If $T$ is any tree of order $n$ with maximum degree $d+1$, then
$$M(S(T_{d+1}^*),x)\le M(S(T), x), \ \mbox{for }\ x>0, $$
with equality if and only if $T=T_{d+1}^*$.
\end{theorem}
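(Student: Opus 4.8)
The plan is to reduce the whole statement to the structural lemmas already proved, by a short case analysis on the degree sequence of an optimal tree, after first settling existence. Existence of an optimal tree is the only genuinely delicate point, since the definition asks for a single $\widetilde{T}$ minimizing $M(S(T),x)$ simultaneously for every $x>0$. I would handle it by finiteness: order $\mathcal{S(T)}_{n,d+1}$ by declaring $S(T_1)\preceq S(T_2)$ when $M(S(T_1),x)\le M(S(T_2),x)$ for all $x>0$. Because $\mathcal{T}_{n,d+1}$ is finite this poset has minimal elements, and I claim every minimal element is $S(T^*_{d+1})$. The point is that the exchange machinery (Theorem~\ref{exchange0} and Corollary~\ref{exchange}) only ever uses that no admissible exchange can strictly lower the matching generating function, which is precisely the defining property of a minimal element; hence Lemmas~\ref{degeled}, \ref{deg1ord+11} and \ref{case3} apply verbatim to any minimal element. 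A finite poset with a unique minimal element has that element as its least element, so $S(T^*_{d+1})$ is then the global pointwise minimum, which is exactly the assertion that it is the unique optimal tree.

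The case analysis itself is short. Let $S(\widetilde{T})$ be an optimal (equivalently, minimal) tree. By Corollary~\ref{cor27} at most one vertex of $\widetilde{T}$ has degree in $\{2,\dots,d\}$, giving two cases. If such a vertex exists, Lemma~\ref{degeled} immediately yields $\widetilde{T}=T^*_{d+1}$. Otherwise every vertex has degree $1$ or $d+1$, and I split according to the number of pendent vertices in a neighbourhood: if some vertex has between $1$ and $d-1$ pendent neighbours, Lemma~\ref{deg1ord+11} gives $\widetilde{T}=T^*_{d+1}$; if no vertex does, then, excluding the trivial star $K_{1,d+1}=C_2$ in which a vertex would have $d+1$ pendent neighbours, every vertex has either $0$ or exactly $d$ pendent neighbours, so Lemma~\ref{case3} applies and again $\widetilde{T}=T^*_{d+1}$. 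These subcases are exhaustive because a nonpendent vertex has exactly $d+1$ neighbours, so its number of pendent neighbours lies in $\{0,1,\dots,d+1\}$.

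Finally I would assemble the conclusion. The case analysis shows that every optimal tree equals $S(T^*_{d+1})$; combined with the existence step this proves $S(T^*_{d+1})$ is optimal, i.e. $M(S(T^*_{d+1}),x)\le M(S(T),x)$ for all $T\in\mathcal{T}_{n,d+1}$ and all $x>0$. Uniqueness of the optimal tree then gives the equality clause: if $T\neq T^*_{d+1}$ then $S(T)$ is not optimal, so $M(S(T),x)>M(S(T^*_{d+1}),x)$ for at least one $x>0$, whence equality of the two generating functions forces $T=T^*_{d+1}$. The main obstacle I anticipate is exactly the existence step: one must verify that the exchange inequalities, although stated for a global minimiser, in fact only require minimality, so that a minimal element of the finite poset may legitimately be fed into Lemmas~\ref{degeled}, \ref{deg1ord+11} and \ref{case3}. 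Once this is confirmed, the remainder is bookkeeping over the exhaustive cases.
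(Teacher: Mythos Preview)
Your proposal is correct and follows essentially the same case analysis as the paper, invoking Corollary~\ref{cor27} and Lemmas~\ref{degeled}, \ref{deg1ord+11}, \ref{case3} according to the degree structure of an optimal tree. Your treatment of existence via minimal elements of the finite pointwise poset is in fact more careful than the paper's own proof, which tacitly assumes an optimal tree exists and then runs the identical three-way split; the route is otherwise the same.
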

\begin{proof} Let $T$ be any tree of order $n$ with maximum degree $d+1$.  If there exists a vertex with degree less than $d+1$, then by Corollary~\ref{cor27}, Lemmas~\ref{exchange-cor-d} and \ref{degeled}, $M(S(T_{d+1}^*,x)\le M(S(T), x), \ \mbox{for }\ x>0 $ with equality if and only if $T=T_{d+1}^*$.
If the degree of each vertex in $V(T)$  is $1$ or $d+1$,  and there exists a vertex $u$ such that there $1\le k\le d-1$ pendent vertices in $N(u)$, then by Lemmas~\ref{d+1} and \ref{deg1ord+11},  $M(S(T_{d+1}^*,x)\le M(S(T), x), \ \mbox{for }\ x>0 $ with equality if and only if $T=T_{d+1}^*$.
Hence we assume that degree of each vertex $v$ in $V(T)$  is $1$ or $d+1$, and  there are $d$ pendent vertices or no pendent vertices in $N(v)$, then by Lemma~\ref{case3}, $M(S(T_{d+1}^*,x)\le M(S(T), x), \ \mbox{for }\ x>0 $ with equality if and only if $T=T_{d+1}^*$. Therefore the assertion holds.
\end{proof}

\begin{lemma}\label{zhougutman}\cite{zhou 2008}
For every tree $T$ of order $n,$
$$
c_{k}(T ) = m_k(S(T )), k=0, \cdots, n.$$
Moreover,
$\varphi(T, x)=M(S(T), x).$
\end{lemma}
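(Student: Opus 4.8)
The plan is to route everything through the adjacency matrix of the subdivision tree and to exploit two classical facts: that the characteristic polynomial of a forest coincides with its matching polynomial, and that the Laplacian and the signless Laplacian of a bipartite graph are cospectral. Since $S(T)$ is itself a tree on $2n-1$ vertices, hence a forest, the Sachs coefficient theorem applies and, because a forest has no cycles, its only linear (Sachs) subgraphs are disjoint unions of edges, i.e. matchings. This yields at once
$$\det\big(xI-A(S(T))\big)=\sum_{k\ge 0}(-1)^k m(S(T),k)\,x^{2n-1-2k}.$$
So it suffices to evaluate the left-hand side in terms of data attached to $T$.

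First I would record the block structure of the adjacency matrix of $S(T)$. Ordering the vertices of $S(T)$ as the original vertices $V(T)$ followed by the edge-vertices $E(T)$, and writing $I(T)$ for the $n\times(n-1)$ incidence matrix of $T$, one has
$$A(S(T))=\begin{pmatrix} 0 & I(T) \\ I(T)^{T} & 0 \end{pmatrix}.$$
Taking the Schur complement of $xI-A(S(T))$ with respect to the invertible block $xI_{n-1}$ (valid for $x\neq 0$) gives
$$\det\big(xI-A(S(T))\big)=x^{n-1}\det\Big(xI_n-\frac{1}{x}I(T)I(T)^{T}\Big)=\frac{1}{x}\det\big(x^2 I_n-Q(T)\big),$$
where $Q(T)=I(T)I(T)^{T}=D(T)+A(T)$ is the signless Laplacian of $T$.

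The key step is to replace $Q(T)$ by $L(T)$. Because $T$ is bipartite, conjugating by the diagonal sign matrix that is $+1$ on one colour class and $-1$ on the other sends $A(T)$ to $-A(T)$ while fixing $D(T)$, so this matrix carries $Q(T)=D(T)+A(T)$ to $L(T)=D(T)-A(T)$; hence $Q(T)$ and $L(T)$ are similar and $\det(x^2 I_n-Q(T))=\det(x^2 I_n-L(T))=\mathcal{L}(T,x^2)$. Combining the two displays and expanding $\frac{1}{x}\mathcal{L}(T,x^2)=\sum_{k=0}^n(-1)^k c_k(T)\,x^{2n-1-2k}$, I would compare the coefficient of $x^{2n-1-2k}$ on both sides to conclude $m(S(T),k)=c_k(T)$ for $k=0,\dots,n$, which is the first assertion.

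For the generating-function identity I would observe that $S(T)$ has $2n-1$ vertices, so its matchings have at most $n-1$ edges and $m(S(T),k)=0$ for $k\ge n$; together with $c_n(T)=0$ this gives $M(S(T),x)=\sum_{k=0}^{n-1}m(S(T),k)\,x^k=\sum_{k=0}^{n-1}c_k(T)\,x^k=\varphi(T,x)$. The only genuine obstacle is the algebraic identification in the middle: one must handle the excluded value $x=0$ in the Schur-complement step (both sides are polynomials agreeing for all $x\neq 0$, so they agree identically by continuity), and the cospectrality of $Q(T)$ and $L(T)$ must be invoked through bipartiteness rather than assumed in general. Everything else is routine bookkeeping of coefficients.
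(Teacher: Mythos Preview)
Your argument is correct and complete. The paper does not supply its own proof of this lemma: it is quoted from Zhou and Gutman \cite{zhou 2008} and used as a black box, so there is no in-paper proof to compare against. It is worth noting that the identity you derive, $\mathcal{L}(T,\lambda^{2})=\lambda\,\mathcal{A}(S(T),\lambda)$, is precisely the relation the paper invokes (citing \cite{Yan2006}) in the proof of Lemma~\ref{energy}; your Schur-complement computation together with the $L$--$Q$ cospectrality for bipartite graphs is the standard way to obtain it, and the passage from that identity to the coefficient equality $c_{k}(T)=m(S(T),k)$ via the Sachs/matching-polynomial description of $\det(xI-A(S(T)))$ is exactly right.
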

Now we are ready to prove Theorem~\ref{main1}.

\begin{proof} of  Theorem~\ref{main1}. It follows from Theorem~\ref{matching} and Lemma~\ref{zhougutman} that the assertion holds.
\end{proof}
Hosoya in 1971 \cite{hosoya1971} introduced a molecular graph structure descriptor $Z(T)$, which is now called the {\it Hosoya index,}
 $Z(T)=\sum_{k\ge 0} m(T, k).$
 Wager and Gutman  \cite{wagner2010} surveyed  properties and techniques for the Hosoya index.  We present a result for the Hosoya index.
 \begin{corollary}\cite{wagner2010}
 Let $T$ be any tree of order $n$ with the maximum degree $d+1$. Then
 $Z(S(T))\ge Z(S(T_{d+1}^*))$ with equality if and only if $T=T_{d+1}^*$.
 \end{corollary}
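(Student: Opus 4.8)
The final statement to prove is the corollary on the Hosoya index:

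\begin{quote}
Let $T$ be any tree of order $n$ with the maximum degree $d+1$. Then $Z(S(T))\ge Z(S(T_{d+1}^*))$ with equality if and only if $T=T_{d+1}^*$.
\end{quote}

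The plan is to derive this corollary as an immediate specialization of Theorem~\ref{matching}, which is already the heavy lifting. First I would recall the definition of the Hosoya index: for any tree $F$ we have $Z(F)=\sum_{k\ge 0}m(F,k)$, so that $Z(F)$ is exactly the value of the matching generating function $M(F,x)$ evaluated at $x=1$, i.e. $Z(F)=M(F,1)$. Applying this with $F=S(T)$ gives $Z(S(T))=M(S(T),1)$ and likewise $Z(S(T_{d+1}^*))=M(S(T_{d+1}^*),1)$.

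Next I would invoke Theorem~\ref{matching}, which asserts that for every tree $T$ of order $n$ with maximum degree $d+1$ one has $M(S(T_{d+1}^*),x)\le M(S(T),x)$ for all $x>0$, with equality if and only if $T=T_{d+1}^*$. Since $x=1>0$ lies in the range covered by Theorem~\ref{matching}, specializing the inequality at $x=1$ yields $M(S(T_{d+1}^*),1)\le M(S(T),1)$, which by the previous paragraph is precisely $Z(S(T_{d+1}^*))\le Z(S(T))$, that is, $Z(S(T))\ge Z(S(T_{d+1}^*))$.

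For the equality characterization I would argue as follows. If $T=T_{d+1}^*$ then trivially $Z(S(T))=Z(S(T_{d+1}^*))$. Conversely, suppose $Z(S(T))=Z(S(T_{d+1}^*))$, i.e. $M(S(T),1)=M(S(T_{d+1}^*),1)$. Since all coefficients $m(S(T),k)$ and $m(S(T_{d+1}^*),k)$ are nonnegative, and Theorem~\ref{matching} gives the coefficientwise domination implicit in $M(S(T_{d+1}^*),x)\le M(S(T),x)$ for all $x>0$, equality of the two polynomials at $x=1$ forces $T=T_{d+1}^*$ by the equality clause of Theorem~\ref{matching}. There is essentially no obstacle here: the entire difficulty has been front-loaded into establishing Theorem~\ref{matching} via the exchange argument of Theorem~\ref{exchange0} and the structural Lemmas~\ref{degeled}, \ref{d+1}, \ref{deg1ord+11} and \ref{case3}. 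The only small point to be careful about is justifying that equality at the single point $x=1$ implies $T=T_{d+1}^*$; this is safe because Theorem~\ref{matching} already supplies the full equality condition (equality for some $x>0$ iff $T=T_{d+1}^*$), so evaluating at $x=1$ suffices.
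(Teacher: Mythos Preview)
Your proposal is correct and matches the paper's own proof, which is the one-line observation that the corollary follows from Theorem~\ref{matching} by setting $x=1$. Your only superfluous remark is the aside about ``coefficientwise domination implicit in'' the inequality for all $x>0$---that implication is not automatic, but you do not actually need it since you (correctly) rely directly on the equality clause of Theorem~\ref{matching}.
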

 \begin{proof}  It follows from Theorem~\ref{matching} with $x=1$.
 \end{proof}
\section{Proof of Theorem~\ref{main2} }
In order to prove  Theorem~\ref{main2}, we need more notions and Lemmas.
{\it The Laplacian-like energy} \cite{Liu2008} of  a tree $T$, $LEL$ for short, is defined as
$LEL(T)=\sum_{k=1}^{n-1} \sqrt{\mu_{k}},$
where $\mu_1\ge \mu_2\ge \cdots \ge \mu_n=0$ are the eigenvalues of $L(G)$.
%since it has similar
%features as molecular graph energy defined by Gutman \cite{Gutman1978}.
 %$LEL$ describes well the properties which have a close relation with the molecular structures and was proved to be as good as the Randi\'{c} index, better than Wiener index in some areas (see \cite{Stevanovic2009c}).  Further, Stevanovi\'{c} in \cite{Stevanovic2009} established a connection between $LEL$ and
%Laplacian coefficients.
%The Incidence Energy of a tree (see \cite{jooyandeh2009} and \cite{nikiforov2007}) is defined to be
%$$IE(G)=\sum_{i=1}^n\sigma_i(G),
%$$
%where $\sigma_1(G), \cdots, \sigma_n(G)$ is the singular values of the incidence matrix
%$Q(G)=(q_{ij})$ of a graph $G$.
%Hence
%$$IE(T)=LEL(T).$$
 The {\it characteristic polynomial} of a tree $T$ is
$\mathcal{A}(T, \lambda)=\det(\lambda I-A(T))=\sum_{k=0}^n(-1)^ka_k\lambda^{n-k}.$
If  $\lambda_1(T)\ge\lambda_2(T)\ge\cdots\ge\lambda_n(T)$ are the eigenvalues of $A(T)$, then the {\it energy} of $T$ is
$E(T)=\sum_{k=1}^n|\lambda_k(T)|.$
Moreover, the {\it matching polynomial} of  $T$ is defined to be
 $\mathcal{M}(T, \lambda)=\sum_{k=0}^{\lfloor \frac{n}{2}\rfloor}(-1)^k m(T, k)\lambda^{n-2k}.$

\begin{lemma}\label{energy}
Let $T$ be any tree of order $n$. Then
\begin{equation}
IE(T)=LEL(T)=\frac{1}{2}E(S(T)).
\end{equation}
\end{lemma}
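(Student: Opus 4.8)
```latex
The plan is to establish the two equalities in
\[
IE(T)=LEL(T)=\tfrac{1}{2}E(S(T))
\]
separately, linking all three quantities through the spectrum of the subdivision
tree $S(T)$ and the signless Laplacian of $T$. First I would recall the standard
relation between the incidence matrix and the signless Laplacian: for any graph
$G$ one has $I(G)I(G)^{T}=D(G)+A(G)=:Q(G)$, the signless Laplacian. Hence by
definition $IE(T)$ is the sum of the square roots of the eigenvalues of $Q(T)$.
The key structural fact for a \emph{tree} (indeed any bipartite graph) is that the
Laplacian $L(T)$ and the signless Laplacian $Q(T)$ are similar, because a diagonal
$\pm1$ sign matrix corresponding to the bipartition conjugates $A(T)$ to $-A(T)$
and therefore $L(T)=D-A$ to $Q(T)=D+A$. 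Thus $L(T)$ and $Q(T)$ share the same
spectrum $\mu_{1}\ge\cdots\ge\mu_{n}=0$, and so
$IE(T)=\sum_{k}\sqrt{\mu_{k}}=\sum_{k=1}^{n-1}\sqrt{\mu_{k}}=LEL(T)$, which is the
first equality.

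For the second equality I would pass to the subdivision graph $S(T)$ and exploit
the fact that its adjacency eigenvalues are governed by the eigenvalues of $Q(T)$.
The subdivision tree is bipartite with the original $n$ vertices on one side and
the $n-1$ edge-vertices on the other, and its adjacency matrix has the block form
\[
A(S(T))=\begin{pmatrix} 0 & I(T)\\ I(T)^{T} & 0\end{pmatrix}.
\]
The nonzero eigenvalues of such a block-antidiagonal matrix come in
$\pm$ pairs, and their squares are precisely the nonzero eigenvalues of
$I(T)I(T)^{T}=Q(T)$ (equivalently of $I(T)^{T}I(T)$). Therefore the eigenvalues of
$A(S(T))$ are exactly $\pm\sqrt{\mu_{k}}$ for $k=1,\dots,n-1$ together with
$2|E(T)|-2(n-1)=0$ extra zeros coming from the size difference $|V(S(T))|=2n-1$
between the two matrices. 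Summing absolute values gives
\[
E(S(T))=\sum_{k=1}^{n-1}\bigl(\sqrt{\mu_{k}}+\sqrt{\mu_{k}}\bigr)
=2\sum_{k=1}^{n-1}\sqrt{\mu_{k}}=2\,LEL(T),
\]
whence $LEL(T)=\tfrac12 E(S(T))$, completing the chain of equalities.

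I expect the main obstacle to be the careful bookkeeping of eigenvalue
multiplicities and the zero eigenvalues. One must verify that the singular values
of $I(T)$ (an $n\times(n-1)$ matrix) account correctly for both the nonzero
spectrum of $Q(T)$ and the single extra zero eigenvalue of $L(T)=Q(T)$ that does
\emph{not} contribute to $LEL$, and simultaneously that the two matrices
$I(T)I(T)^{T}$ and $I(T)^{T}I(T)$ have the same nonzero spectrum so that the
$\pm$ pairs of $A(S(T))$ are counted without error. Because $T$ is a tree, the
connectivity forces $Q(T)$ to have exactly one zero eigenvalue, so this matching
is clean, but the argument should state explicitly why the dimension count
$2n-1=(n-1)+n$ yields precisely $2(n-1)$ nonzero adjacency eigenvalues of $S(T)$
and a single surviving zero. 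Once those multiplicities are pinned down, the two
equalities follow directly from the similarity of $L(T)$ and $Q(T)$ and from the
square-root relation between the adjacency spectrum of $S(T)$ and the spectrum of
$Q(T)$.
```
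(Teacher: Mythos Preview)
Your proposal is correct and follows essentially the same approach as the paper. The first equality $IE(T)=LEL(T)$ is argued identically via the similarity of $L(T)$ and $Q(T)$ for bipartite graphs; for the second equality the paper quotes the Yan--Yeh identity $\mathcal{L}(T,\lambda^{2})=\lambda\,\mathcal{A}(S(T),\lambda)$ to read off the adjacency spectrum of $S(T)$, whereas you obtain the same spectrum directly from the block form $A(S(T))=\begin{pmatrix}0 & I(T)\\ I(T)^{T} & 0\end{pmatrix}$, which is simply the linear-algebra content underlying that identity.
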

\begin{proof}
By the definition of $IE(T)$, $IE(T)$ is the sum of the square roots of all eigenvalues of  $I(G)I(G)^T$. Note that $I(G)I(G)^T=Q(T)$, which is signless Laplacian matrix. Since $T$ is  bipartite, $L(T)$ and $Q(T)$ are similar, which implies they have the same eigenvalues. Hence
$IE(T)=LEL(T)$. On the other hand, it follows from \cite{Yan2006} that
 $$\mathcal{L}(T, \lambda^2)=\lambda \mathcal{A}(S(T), \lambda).$$
 Then
 the adjacency eigenvalues of $S(T)$ are $\pm\sqrt{\mu_1(T)}, \cdots, \pm\sqrt{\mu_{n-1}(T)}, 0$, where $\mu_1(T), \cdots, \mu_{n-1}(T), 0$ are all eigenvalues of $L(T)$.  Hence
 $$E(S(T))=2\sum_{i=1}^{n-1}\sqrt{\mu_i(T)}=2LEL(T).$$
So the assertion holds.
\end{proof}
Now we are ready to prove Theorem~\ref{main2}.

{\bf Proof} Let $T$ be any tree of order $n$ with the maximum degree $d+1$. By the Coulson integral formula for energy (for example, see \cite{heuberger2009b}),
$$E(S(T))=\frac{2}{\pi}\int_0^\infty x^{-2}\log \left(\sum_{k\ge 0} m(S(T), k)x^{2k}\right) dx.$$
On the other hand, by Theorem~\ref{matching}, $M(S(T), x)\ge M(S(T_{d+1}^*), x)$ for $x>0$ with equality if and only if $T=T_{d+1}^*$. Hence
$ \sum_{k\ge 0} m(S(T), k)x^{2k}\ge \sum_{k\ge 0} m(S(T_{d+1}^*), k)x^{2k}$, which implies $E(S(T))\ge E(S(T_{d+1}^*))$ with equality if and only if $T=T_{d+1}^*$. Therefore, the assertion follows from Lemma~\ref{energy}.
$\blacksquare.$

Denote by $B_{n,d+1}$ the tree obtained by identifying the center vertex of the star $S_{d+1}$ and one of the pendent vertices of the path $P_{n-d}$.
\begin{lemma}\label{transform}\cite{Ilic2009}
Let $T\in \mathcal{T}_{n,d+1}$, then $c_k(T)\le c_k(B_{n,d+1}), ~k=0, 1, 2,...,n$.
\end{lemma}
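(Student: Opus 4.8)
The plan is to reduce the claim to a statement about matchings in subdivision trees and then to reach $B_{n,d+1}$ by a sequence of local moves that strictly increase every coefficient. By Lemma~\ref{zhougutman} we have $c_k(T)=m_k(S(T))$, so the inequality $c_k(T)\le c_k(B_{n,d+1})$ for all $k$ is equivalent to the coefficientwise inequality
$$M(S(T),x)\le M(S(B_{n,d+1}),x)\qquad (x>0).$$
Thus it suffices to show that, among all $T\in\mathcal{T}_{n,d+1}$, the broom $B_{n,d+1}$ uniquely maximizes the matching generating function of the subdivision tree. The point to exploit is that $B_{n,d+1}$ is exactly the tree in $\mathcal{T}_{n,d+1}$ that is ``as path-like as possible'': it has a single vertex $w$ of degree $d+1$, with $d$ pendent neighbors and one neighbor beginning a path through all the remaining vertices.

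First I would fix one vertex $w$ of degree $d+1$ (which exists since the maximum degree is exactly $d+1$) and introduce two path-shifting moves. \emph{De-branching:} if some vertex $v\neq w$ has degree at least $3$, choose $v$ farthest from $w$, so that all branches of $v$ pointing away from $w$ are pendent paths; detaching one such pendent path and appending it to the end of another lowers $\deg(v)$ by one without violating the degree bound and, I claim, strictly increases every $m_k(S(\cdot))$. \emph{Leg-balancing:} once $w$ is the only vertex of degree $\ge 3$, the tree is a spider with $d+1$ pendent legs at $w$; if two legs have length $\ge 2$, shifting one unit from the shorter to the longer keeps $\deg(w)=d+1$ and again strictly increases every $m_k(S(\cdot))$. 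Each move keeps the tree in $\mathcal{T}_{n,d+1}$, and repeated application drives the tree first to a spider and then to the unique spider with $d$ legs of length $1$ and one long leg, namely $B_{n,d+1}$.

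The monotonicity of these moves is where the real work lies, and I would obtain it from the recursion in Lemma~\ref{subdivision-rec} together with the subtree comparison in Lemma~\ref{subgraph}. Concretely, each move is an instance of the exchange set-up of Theorem~\ref{exchange0}: the two affected attachment points play the roles of $u$ and $v$, and the reattached pendent path is the branch being moved. The comparison then reduces, exactly as in the derivation of $(\ref{eq9})$ and $(\ref{eq10})$, to a product of a manifestly positive factor and a factor whose sign is controlled by the ordering of the relevant $\tau(S(\cdot),x)$ values; since appending to the longer path produces a strictly larger subtree and hence a strictly smaller $\tau$ by Lemma~\ref{subgraph}, the sign comes out so that $M(S(\cdot),x)$ strictly increases, with equality only when no move applies.

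The main obstacle is twofold. First, one cannot simply invoke the unconstrained fact that the path $P_n$ maximizes the Laplacian coefficients: the moves must be designed never to raise any degree above $d+1$ and never to destroy the last degree-$(d+1)$ vertex, so that leg-balancing halts at legs of length $1$ (the broom) rather than overshooting to a path of maximum degree $2$. Second, one must verify that the process terminates and yields a unique extremal tree; for this I would use the potential given by the number of vertices of degree $\ge 3$ (strictly decreased by de-branching), followed by a lexicographic measure on the sorted leg lengths at $w$ (strictly decreased by leg-balancing), and combine it with the strict inequalities above to conclude that $B_{n,d+1}$ is the unique maximizer.
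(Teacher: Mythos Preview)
The paper does not prove this lemma at all: it is quoted as a known result from Ili\'{c} \cite{Ilic2009}, so there is no in-paper proof to compare your proposal against. Your overall strategy---reach $B_{n,d+1}$ from an arbitrary $T\in\mathcal{T}_{n,d+1}$ by repeated ``de-branching'' followed by ``leg-balancing'' at the unique vertex of degree $d+1$---is exactly the $\pi$/$\sigma$-transformation route that Mohar \cite{Mohar2007} and Ili\'{c} \cite{Ilic2009} use, and your care about never destroying the last degree-$(d+1)$ vertex is the right way to keep the argument inside $\mathcal{T}_{n,d+1}$.

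There is, however, a genuine gap in your justification of the monotonicity step. The lemma is a \emph{coefficientwise} statement: you must show $m_k(S(T))\le m_k(S(B_{n,d+1}))$ for every $k$, not merely $M(S(T),x)\le M(S(B_{n,d+1}),x)$ for all $x>0$. These are not equivalent (the latter does not imply the former for general polynomials with nonnegative coefficients), and you slide between the two in your first paragraph. More to the point, the machinery you invoke---Theorem~\ref{exchange0} and the identities $(\ref{eq9})$, $(\ref{eq10})$---only yields the pointwise inequality: the difference $M(S(T),x)-M(S(T'),x)$ is expressed there through rational functions such as $\frac{x}{1+x\tau(S(\cdot),x)}$, and nonnegativity of that expression for $x>0$ does not give nonnegativity of each coefficient of the polynomial difference. (Theorem~\ref{exchange0} is also oriented toward \emph{concentrating} branches to decrease $M$, whereas your moves spread them out; the reversal is plausible but is an additional thing to check.) To close the gap you need a coefficientwise argument for each move, e.g.\ an explicit injection on $k$-matchings of $S(T)$, or a direct appeal to the $\pi$- and $\sigma$-transformation lemmas of \cite{Mohar2007,Ilic2009}, rather than the generating-function comparison of Theorem~\ref{exchange0}.
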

 Combining with Theorems~\ref{main1} and \ref{matching} and Lemma~\ref{transform}, we are able to get the following results.
\begin{theorem}\label{maximum}
Let $T$ be any tree of order $n$ with the maximum degree $d+1$. Then for $x>0$,
$$M(S(T_{d+1}^*), x)\le M(S(T),x)\le  M(S(B_{n,d+1}),x),$$
$$\varphi(T_{d+1}^*, x)\le \varphi(T, x)\le\varphi(B_{n,d+1},x)
$$
 with left (right) equality if and only if $T=T_{d+1}^*$ ($T=B_{n,d+1})$.
\end{theorem}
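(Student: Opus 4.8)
The plan is to assemble Theorem~\ref{maximum} entirely from results already established in the excerpt, so that no new machinery is required; the entire content is a matter of correctly chaining the three ingredients and tracking the equality cases. First I would record the left-hand inequality chain on the subdivision trees. By Theorem~\ref{matching}, $S(T_{d+1}^*)$ is the unique optimal tree in $\mathcal{S(T)}_{n,d+1}$, which is precisely the statement that $M(S(T_{d+1}^*),x)\le M(S(T),x)$ for every $T\in\mathcal{T}_{n,d+1}$ and every $x>0$, with equality if and only if $T=T_{d+1}^*$. This gives the left inequality and its sharpness immediately.

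Next I would produce the right-hand inequality. The natural route is to pass from the matching generating function of the subdivision to the Laplacian coefficients via Lemma~\ref{zhougutman}, which asserts $c_k(T)=m_k(S(T))$ and hence $\varphi(T,x)=M(S(T),x)$. Lemma~\ref{transform} (Ili\'c) states that $c_k(T)\le c_k(B_{n,d+1})$ coefficientwise for all $k$ and all $T\in\mathcal{T}_{n,d+1}$. Since $x>0$ forces every monomial $x^k$ to be positive, a coefficientwise domination of the polynomials yields the pointwise domination
$$\varphi(T,x)=\sum_{k}c_k(T)x^k\le \sum_{k}c_k(B_{n,d+1})x^k=\varphi(B_{n,d+1},x).$$
Translating back through Lemma~\ref{zhougutman} gives $M(S(T),x)\le M(S(B_{n,d+1}),x)$ as well. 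Combining this with the first paragraph produces the full two-sided chain in both the $M(S(\cdot),x)$ form and the $\varphi(\cdot,x)$ form simultaneously, since the two forms are literally equal by Lemma~\ref{zhougutman}.

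Finally I would settle the equality characterizations. The left equality case is inherited verbatim from Theorem~\ref{matching}: equality holds iff $T=T_{d+1}^*$. For the right equality case one must argue that $B_{n,d+1}$ is the \emph{unique} coefficientwise maximizer, i.e. that $c_k(T)=c_k(B_{n,d+1})$ for all $k$ forces $T=B_{n,d+1}$. The cited statement of Lemma~\ref{transform} as written only gives the inequality $c_k(T)\le c_k(B_{n,d+1})$, so the uniqueness of the maximizer is the one point not literally handed to us; I expect this to be the main (indeed the only) obstacle. The plan is to invoke the known sharpened form of Ili\'c's result—that $B_{n,d+1}$ is the unique maximal element, equivalently that the path-like caterpillar $B_{n,d+1}$ strictly dominates every other tree of maximum degree $d+1$ in at least one coefficient—which is consistent with the poset language of Mohar and Zhou--Gutman recalled in the introduction (the global maximum over all trees being the path $P_n$). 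With that uniqueness in hand, equality on the right forces $T=B_{n,d+1}$, and the theorem follows.
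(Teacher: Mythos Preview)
Your proposal is correct and follows essentially the same route as the paper, which simply states that the theorem follows by combining Theorems~\ref{main1} and~\ref{matching} with Lemma~\ref{transform}; your use of Lemma~\ref{zhougutman} to pass between $\varphi$ and $M(S(\cdot),x)$ is exactly how Theorem~\ref{main1} was derived from Theorem~\ref{matching}. You are in fact more careful than the paper in flagging that the right-hand equality case requires uniqueness in Lemma~\ref{transform}, which the paper does not explicitly justify either.
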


\begin{lemma}\label{d-d+1}
 Let $S(T^*_{d}),~S(T^*_{d+1})$ be the optimal trees in $\mathcal{S(T)}_{n, d}$ and $\mathcal{S(T)}_{n, d+1}$ respectively. Then $M(S(T^*_{d}),x)> M(S(T^*_{d+1},x)$ for $x>0$ and  $n>d+1$.
\end{lemma}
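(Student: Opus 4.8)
The plan is to derive the strict inequality from the minimality of the greedy tree together with a single application of the exchange Theorem~\ref{exchange0}, run in one direction only. Since $T^*_{d+1}$ is the unique optimal tree in $\mathcal{S(T)}_{n,d+1}$ (Theorem~\ref{matching}), it suffices to exhibit just one tree $T'\in\mathcal{T}_{n,d+1}$ with $M(S(T'),x)<M(S(T^*_d),x)$ for $x>0$; then Theorem~\ref{matching} gives $M(S(T^*_{d+1}),x)\le M(S(T'),x)<M(S(T^*_d),x)$, which is exactly the claim. Note one cannot argue in the reverse direction by set inclusion, since $\mathcal{T}_{n,d+1}$ consists of trees of maximum degree \emph{exactly} $d+1$, so $T^*_d\notin\mathcal{T}_{n,d+1}$ and the minimality of $T^*_{d+1}$ does not directly bound $M(S(T^*_d),x)$.

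To build such a $T'$ I would start from $T^*_d$ and use Theorem~\ref{exchange0} to raise one vertex to degree $d+1$ while (weakly) decreasing $M(S(\cdot),x)$. Let $r$ be the root of $T^*_d$, of degree $d$, and let $p$ be a deepest internal vertex; because $n>d+1$ the tree has depth at least two, so $p\neq r$. By the definition of the greedy tree every internal vertex of $T^*_d$ has degree $d$ and the children of $p$ are all leaves. Decompose $T^*_d$ as in Fig.~1 by taking $T_0$ to be the $r$–$p$ path together with the side branches hanging at its interior vertices, so that $r$ and $p$ are honest degree-$1$ endpoints of $T_0$; let the $d_1=d-1$ branches of $r$ other than the one toward $p$ be $L_1,\dots,L_{d_1}$, and let the $d_2=d-1$ leaf branches of $p$ be $R_1,\dots,R_{d_2}$. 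After possibly interchanging the names of $r$ and $p$ we may assume $M_{10}(S(T_0),x)\le M_{01}(S(T_0),x)$, so the hypothesis of Theorem~\ref{exchange0} holds. Applying the theorem with target parameter equal to $d$, which satisfies $d\ge\max\{d_1,d_2\}=d-1$, concentrates $d$ branches at $v_2$, producing a tree $T'$ in which $v_2$ has degree $d+1$ and every other vertex has degree at most $d+1$. Hence $T'\in\mathcal{T}_{n,d+1}$ and Theorem~\ref{exchange0} gives $M(S(T^*_d),x)\ge M(S(T'),x)$.

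It remains to make this strict. The equality clause of Theorem~\ref{exchange0} requires either $d_2=d$, or $M_{10}=M_{01}$ together with $d_1=d$; but here $d_1=d_2=d-1\neq d$, so equality is impossible and $M(S(T^*_d),x)>M(S(T'),x)$ for $x>0$. (One can also see the strictness concretely: the branches of $p$ are single vertices $C_1$, for which $\tau(S(C_1),x)=1$ is maximal by Lemmas~\ref{complete tree} and \ref{subgraph}, while the balancedness forced by $n>d+1$ makes at least one branch of $r$ a larger subtree with $\tau<1$, so the rearrangement is genuine.) Combining with the minimality of $T^*_{d+1}$ completes the argument. The point requiring the most care is the bookkeeping of the decomposition: one must verify that $r$ and $p$ really are degree-$1$ vertices of $T_0$, so that placing $d$ branches at one of them yields degree exactly $d+1$, and that the sign condition $M_{10}\le M_{01}$ can always be arranged by the $r\leftrightarrow p$ symmetry. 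Once this setup is in place, both the inequality and its strictness follow immediately from Theorem~\ref{exchange0}.
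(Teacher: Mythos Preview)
Your argument is correct and follows essentially the same route as the paper: start from $T^*_d$, pick two non-pendent vertices (possible since $n>d+1$), apply Theorem~\ref{exchange0} with target parameter $d$ to produce some $T'\in\mathcal{T}_{n,d+1}$ with $M(S(T^*_d),x)>M(S(T'),x)$, and finish by the optimality of $T^*_{d+1}$. Your write-up is in fact more explicit than the paper's about the choice of vertices, the decomposition, and why strictness holds; the only minor slip is the blanket claim that ``every internal vertex of $T^*_d$ has degree $d$'' (one exceptional vertex of smaller degree is allowed), but since $d_1,d_2\le d-1<d$ in any case, your strictness argument via the equality clause of Theorem~\ref{exchange0} is unaffected.
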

\begin{proof}
Since $n>d+1$, then we can find two non-pendent vertices $u,~v$ in $S(T^*_{d})$. By Corollary~\ref{exchange}, there is at most one vertex $w$ such that $2\le deg(w)\le d$. Use Lemma~\ref{exchange0} for vertices $u,~v$ in $S(T^*_{d})$, then we can get a new graph $T'\in \mathcal{S(T)}_{n, d+1}$ such that $M(S(T^*_{d}),x)> M(S(T',x)\ge M(S(T^*_{d+1},x)$. This completes the proof.
\end{proof}
Hence it is easy to see the following assertion holds.
\begin{corollary} (\cite{zhou 2008},\cite{Mohar2007})
Let $T$ be any tree of order $n$. Then
$$\varphi(K_{1, n-1}, x)\le \varphi(T, x)\le \varphi(P_n, x),$$
$$M(S(K_{1,n}),x)\le M(S(T), x)\le  M(S(P_{n},x)$$
 for $x>0$ with left (right) equality if and only if $T=K_{1,n-1}$ ($T=P_n  $).
\end{corollary}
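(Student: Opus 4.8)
The plan is to deduce the final corollary as the special case $d+1 = n-1$ (equivalently, imposing no real constraint on the maximum degree beyond the trivial bound) of the machinery already assembled, combined with the monotonicity in the maximum degree supplied by Lemma~\ref{d-d+1}. The corollary asserts that among all trees of order $n$, the star $K_{1,n-1}$ minimizes and the path $P_n$ maximizes both the Laplacian coefficient generating function $\varphi(T,x)$ and the matching generating function $M(S(T),x)$ for $x>0$. The key observation is that the set of all trees of order $n$ is the disjoint union over $\Delta = 2, \ldots, n-1$ of the classes $\mathcal{T}_{n,\Delta}$, so I can first locate the extremal trees within each fixed-degree class and then compare across classes.

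First I would handle the right-hand (maximum) inequality. By Theorem~\ref{maximum}, inside each class $\mathcal{T}_{n,d+1}$ the tree $B_{n,d+1}$ is the unique maximizer of $M(S(T),x)$ and of $\varphi(T,x)$. The smallest admissible maximum degree is $\Delta = 2$, which forces $T$ to be the path $P_n$ itself; note $B_{n,2} = P_n$ since attaching a star $S_2$ (a single edge) to an endpoint of $P_{n-1}$ just reproduces $P_n$. To conclude that $P_n$ beats every $B_{n,d+1}$ with $d+1 \ge 3$, I would invoke the monotonicity across degree classes: the analogue of Lemma~\ref{d-d+1} for the maximal elements, or directly the fact that $B_{n,d+1}$ is obtained from $P_n$ by the Laplacian-coefficient-decreasing operations of Mohar and Zhou--Gutman cited in the introduction. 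Either way $M(S(B_{n,d+1}),x) \le M(S(P_n),x)$ with equality iff $d+1 = 2$, giving the global maximum.

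For the left-hand (minimum) inequality I would use Lemma~\ref{d-d+1} directly. That lemma states $M(S(T^*_d),x) > M(S(T^*_{d+1}),x)$ for $x>0$ and $n > d+1$, i.e.\ the optimal (minimizing) tree within $\mathcal{T}_{n,d+1}$ has strictly smaller matching generating function as the maximum degree $d+1$ increases. Iterating, the global minimum over all trees is attained at the largest possible maximum degree $d+1 = n-1$. But $\mathcal{T}_{n,n-1}$ consists of the single tree $K_{1,n-1}$ (a vertex of degree $n-1$ already uses all other vertices as leaves), so $T^*_{n-1} = K_{1,n-1}$. Combining with Theorem~\ref{matching}, which identifies $S(T^*_{d+1})$ as the unique optimal tree in each class, yields $M(S(K_{1,n-1}),x) \le M(S(T),x)$ for every tree $T$, with equality iff $T = K_{1,n-1}$. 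Translating through Lemma~\ref{zhougutman} (the identity $\varphi(T,x) = M(S(T),x)$) gives the statement for $\varphi$ as well.

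The main obstacle I anticipate is the cross-class comparison at the maximum end, since the clean monotonicity of Lemma~\ref{d-d+1} is stated only for the minimizers $T^*_{d+1}$, not for the maximizers $B_{n,d+1}$. I would resolve this either by citing the Zhou--Gutman and Mohar results already recalled in the introduction (which assert $P_n$ is the unique maximal element of the whole poset $(\mathcal{T}_n,\preceq)$, hence maximal coefficientwise and therefore for $\varphi(T,x)$ and $M(S(T),x)$ at every $x>0$), or by exhibiting an explicit edge-sliding transformation that moves $B_{n,d+1}$ toward $P_n$ while monotonically increasing each $c_k$, analogous to Lemma~\ref{transform}. Given that the corollary is explicitly attributed to \cite{zhou 2008} and \cite{Mohar2007}, the intended argument is surely to read off the maximum directly from those cited poset results and to supply only the minimum end and the matching-polynomial reformulation as the genuinely new content flowing from Theorem~\ref{matching} and Lemma~\ref{d-d+1}.
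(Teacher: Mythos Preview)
Your proposal is correct and is essentially a careful unpacking of the paper's own argument: the paper simply writes ``Hence it is easy to see the following assertion holds'' immediately after Lemma~\ref{d-d+1}, and what you have spelled out---iterating Lemma~\ref{d-d+1} to push the minimizer up to $d+1=n-1$ (where only $K_{1,n-1}$ lives), invoking Theorem~\ref{maximum} within each class, citing the Zhou--Gutman/Mohar poset result for the global maximum at $P_n$, and translating via Lemma~\ref{zhougutman}---is exactly the intended route. You have also correctly diagnosed that the cross-class comparison at the maximum end is not furnished by Lemma~\ref{d-d+1} itself and must come from the cited references, which is why the corollary is attributed to them.
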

Based on the above results, we conclude this paper with the following conjecture.
\begin{conjecture} Let $T$ be any  tree of order $n$ with the maximum degree $d+1$. Then
$T_{d+1}^*\preceq T$ with equality if and only if $T=T_{d+1}^*$, i.e., $c_k(T_{d+1}^*)\le c_k(T)$ for $k=0, \cdots, n$ with all equalities if and only if $T=T_{d+1}^*$.
\end{conjecture}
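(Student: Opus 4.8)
The plan is to upgrade every pointwise inequality in the paper to a coefficient-wise one, since by Lemma~\ref{zhougutman} the conjecture $T_{d+1}^*\preceq T$ is literally the assertion that $m_k(S(T_{d+1}^*))\le m_k(S(T))$ for every $k$, i.e. that $M(S(T),x)-M(S(T_{d+1}^*),x)$ has nonnegative coefficients; Theorem~\ref{matching} only gives this difference to be nonnegative for $x>0$, which is strictly weaker. Write $f\succeq g$ to mean that $f-g$ has nonnegative coefficients. The architecture of Sections~2--3 reduces an arbitrary $T$ to $T_{d+1}^*$ through a finite sequence of branch-sorting moves governed by Theorem~\ref{exchange0} and Corollary~\ref{exchange}; if each such move admitted the coefficient-wise strengthening $M(S(T),x)\succeq M(S(T'),x)$, then, because $\succeq$ is transitive, the structural Lemmas~\ref{exchange-cor-d}, \ref{degeled}, \ref{d+1}, \ref{deg1ord+11} and \ref{case3} would go through verbatim and deliver the conjecture. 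So the real task is a coefficient-wise version of the exchange theorem.

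First I would reduce the general rearrangement in Theorem~\ref{exchange0} to a single adjacent swap of one branch $L$ of $T_1$ with one branch $R$ of $T_2$ (bubble-sorting the $\tau$-values), so that it suffices to treat one transposition. Using Lemma~\ref{subdivision-rec} exactly as in the proof of Theorem~\ref{exchange0}, the generating function factors as
$$M(S(T),x)=W\bigl[M_{00}(1+S_L)(1+S_R)+M_{11}+M_{10}(1+S_R)+M_{01}(1+S_L)\bigr],$$
where $W=\prod_B\phi_B$, with $\phi_B=M(S(B),x)+xM_0(S(B),x)$ running over all branches, is invariant under the swap, and $S_L=\sum_{B\in T_1}xM(S(B),x)/\phi_B$, $S_R=\sum_{B\in T_2}xM(S(B),x)/\phi_B$ have invariant sum. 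The point is that $W\cdot xM(S(B),x)/\phi_B=xM(S(B),x)\prod_{B'\ne B}\phi_{B'}$ is an honest nonnegative-coefficient polynomial, so $M(S(T),x)-M(S(T'),x)$ can be written entirely without denominators. After cancellation the difference for one swap collapses to an expression assembled from the two atomic quantities $xM(S(L),x)\prod_{B\ne L}\phi_B-xM(S(R),x)\prod_{B\ne R}\phi_B$ and $M_{01}(S(T_0),x)-M_{10}(S(T_0),x)$, together with a quadratic term coming from $S_LS_R-S_L'S_R'$.

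The heart of the matter, and the step I expect to be the genuine obstacle, is to show these atomic quantities have nonnegative coefficients. Canceling common factors, the first reduces to the claim $M(S(L),x)M_0(S(R),x)\succeq M(S(R),x)M_0(S(L),x)$, a coefficient-wise refinement of the pointwise statement $\tau(S(R),x)\ge\tau(S(L),x)$, while the second asks $M_{01}(S(T_0),x)\succeq M_{10}(S(T_0),x)$, a coefficient-wise refinement of the hypothesis $M_{10}\le M_{01}$. This is precisely where the machinery of Section~2 cannot be transplanted: it rests on the ratio $\tau=M_0/M$, and the decisive rearrangement inequalities~(\ref{eq7})--(\ref{eq9}) are inequalities of rational functions valid only for $x>0$, which do not preserve coefficient-wise domination; indeed pointwise $\tau(S(R),x)\ge\tau(S(L),x)$ does not imply $M(S(L),x)M_0(S(R),x)\succeq M(S(R),x)M_0(S(L),x)$ in general.

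I would therefore abandon the $\tau$-calculus for this step and instead establish the two atomic comparisons directly, by induction using the edge-deletion recursion $m(G,k)=m(G-e,k)+m(G-\{u,v\},k-1)$ together with Lemma~\ref{complete tree}, exploiting the fact that in the greedy reduction the branches $L,R$ that actually get swapped are complete or near-complete $d$-ary trees $C_h$; for these one may hope to produce an explicit injection of $k$-matchings, or an equivalent manifestly nonnegative-coefficient algebraic identity. The quadratic $M_{00}$-term splits into products of exactly the same atomic comparisons, so once those are secured coefficient-wise the whole single-swap difference is a nonnegative-coefficient combination, the atomic swaps compose by transitivity of $\succeq$, and the structural lemmas then force $T=T_{d+1}^*$. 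The crux is thus the coefficient-wise atomic comparisons, and their apparent unavailability in full generality is presumably why the statement is only conjectured.
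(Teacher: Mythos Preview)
The statement is a \emph{Conjecture} in the paper; the authors offer no proof, and indeed Theorem~\ref{main1} and Theorem~\ref{matching} deliberately stop at the pointwise inequality $\varphi(T_{d+1}^*,x)\le\varphi(T,x)$ for $x>0$ precisely because the coefficient-wise version is left open. So there is nothing in the paper to compare your attempt against.

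Your proposal is not a proof either, and to your credit you say so: you isolate the key unproven step as the coefficient-wise comparison $M(S(L),x)M_0(S(R),x)\succeq M(S(R),x)M_0(S(L),x)$ and the analogous $M_{01}\succeq M_{10}$. That diagnosis is accurate, but there are two further structural gaps in your outline that you should be aware of. First, the paper does \emph{not} reduce an arbitrary $T$ to $T_{d+1}^*$ by a sequence of swaps; rather, Corollary~\ref{exchange} and Lemmas~\ref{exchange-cor-d}--\ref{case3} argue by contradiction from optimality: if $\widetilde{T}$ is already optimal and fails some structural property, one swap produces a strictly smaller tree. Converting this to a chain of swaps from a general $T$ is a separate (and nontrivial) combinatorial task. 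Second, and more seriously, the very choice of which branches to swap in Theorem~\ref{exchange0} is governed by the ordering of the $\tau(S(B),x)$, and this ordering can depend on $x$; ``bubble-sorting the $\tau$-values'' is not a well-defined combinatorial move independent of $x$, so the sequence of swaps you envision is not canonically determined. Similarly, the hypothesis $M_{10}(S(T_0),x)\le M_{01}(S(T_0),x)$ of Theorem~\ref{exchange0} is a pointwise condition whose truth for a given $T_0$ may itself vary, and in Corollary~\ref{exchange} it is deduced from optimality rather than verified directly.

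In short: you have correctly located the obstruction (the $\tau$-calculus manipulates ratios and does not respect $\succeq$), but the reduction-to-swaps framework you propose has its own $x$-dependence problems beyond the atomic comparisons. The conjecture remains open in the paper for exactly these reasons.
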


 \frenchspacing

\newpage

{\huge{{\bf Appendix}}}

In here, we present detail proof of Lemmas~\ref{deg1ord+11} and \ref{case3}

{\bf Lemma~\ref{deg1ord+11}}
Let $S(T)$ be an optimal tree  in $\mathcal{S(T)}_{n, d+1}$. If  degree of each vertex is 1 or $d+1$ in $V(T)$,  and   there is a vertex $v$ such that there are $1\le h\le d-1 $ pendent vertices in $N(v)$, then $T$ is greedy tree $T_{d+1}^*$.

\begin{proof}  If the branches $T_1, \cdots, T_{d+1}$ of $T-u$ contains no $C_3$, then by Lemma~\ref{d+1}, $T$ is a greedy tree. Now assume that the branches $T_1, \cdots, T_{d+1}$ of $T-u$ contains $C_3$, say $T_{d+1}\supseteq C_3$, then  by Lemma~\ref{d+1}, $T_1=C_2$ and $T_i=C_1$ or $C_2$ for $i=2, \cdots, d$. Let
 $P$ (see Fig.~$1$) be the longest path which goes through $v$ and terminates at non-pendent vertices with $v_2=v$.  Assume the length of $P$ is $2k$.
Let $L_{1}^i,...,L_{d}^i$ be the branches of $T-v_i$ containing no $w_1$ and $L_{1}^i$ contains $v_2$ for $i=3, \cdots, k$. Let $R_{1}^i,...,R_{d}^i$ be the branches of $T-w_i$ containing no $v_1$, and $R_{1}^i$ contains $w_1$ for $2\le i\le k$.  Clearly $R_{1}^2=R_{2}^2=\cdots=R_{d}^2=C_2$.

{\bf Claim : }
\begin{itemize} \setlength{\itemsep}{-\itemsep}
\item (1). $\tau(S(C_t),x)<\tau(S(L_{1}^t),x)< \tau(S(C_{t-1}),x),~ t=3, \cdots,  k+1$.
%\item No vertex in $L_{t,d+1}$ has branches $C_{t-1},~K_t,~3\le t\le k+1$, where $K_t\neq C_{t-1}$;
\item  (2). $L_{i}^t= C_{t-1}$ or $~C_{t} ,~ t=3, \cdots,  k+1,~ i=2, \cdots, d$.
\item (3). $R_{1}^t=R_{2}^t=\cdots=R_{d}^t=C_t,~ t=3, \cdots, k$.
\end{itemize}

We prove Claim by the induction on $t$.
For $t=3$, by Lemma~\ref{d+1},
 \begin{eqnarray*}
 \tau(S(C_t),x)&=& \frac{1}{1+\sum_{j=1}^d \frac{x}{1+x\tau(S(C_2),x)}}          \\
  &<& \frac{1}{1+\sum_{j=1}^d \frac{x}{1+x\tau(S(L_j^2),x)}}=\tau(S(L_1^3),x)\\
  &<& \frac{1}{1+\sum_{j=1}^d \frac{x}{1+x\tau(S(C_1),x)}} =\tau(S(C_2),x).
  \end{eqnarray*}
    So  Claim (1) holds for $t=3$.
By Lemma~\ref{d+1}, Claim (2) holds for $t=3$. Moreover,  by $\tau(S(L_1^3),x)>\tau(S(C_3),x)=\tau(S(R_1^3),x) $ and Corollary~\ref{exchange}, $\tau(S(L_1^3),x)\ge \max\{\tau(S(R_1^3),x), \cdots, \tau(S(R_d^3),x)$.  Combining with Lemma~\ref{d+1} and $\tau(S(C_2),x)>\tau(S(L_1^3),x)$, we have  $R_i^3=C_3$ for $i=2, \cdots, d$. Therefore
Claim (3) holds for $t=3$.

Assume that Claim holds for less than $t$ and we consider Claim for $t$. By the induction prothesis,
$$\tau(S(C_{t-1}),x)\le \tau(S(L_{1}^{t-1}),x)\le \tau(S(C_{t-2}),x)$$
and $\tau(S(C_{t-1}),x)\le \tau(S(L_{j}^{t-1}),x)\le \tau(S(C_{t-2}),x)$  for $j=2, \cdots, d$. It follows from   $(\ref{LM-tau})$ and (\ref{ch-ch-1}) that
\begin{eqnarray*}
\tau(S(C_{t}),x)&=&
\frac{1}{1+\sum_{i=1}^d\frac{x}{1+x\tau(S(C_{t-1}),x)}}\\
&<& \frac{1}{1+\sum_{i=1}^d\frac{x}{1+x\tau(S(L_{i}^{t-1}),x)}}\\
&=&\tau(S(L_{1}^t),x)\\
&<&\frac{1}{1+\sum_{i=1}^d\frac{x}{1+x\tau(S(C_{t-2}),x)}}.\\
&=&\tau(S(C_{t-1}),x)
\end{eqnarray*}
 Hence (1) holds for $t$.
 In order to prove (2) holds for $t$, we first prove the following several Claims

 {\bf Claim~2.1:} $\tau(S(C_t), x)\le \tau(S(L_i^t), x)\le \tau(S(C_{t-1}, x),$ for $i=2, \cdots, d$.

  In fact, there are $d$ branches $L_1^t, \cdots, L_d^t $ containing no $w_t$ in $T_{d+1}^*-v_t$ and there are $d$ branches $C_t, R_2^t,\cdots, R_d^t$ containing no $v_t$ in
     $T_{d+1}^*-w_t$. By  Claim~(1),  we have $\tau(S(C_t),x)<\tau(S(L_{1}^t),x)$. Hence by Corollary~\ref{exchange}, $\min\{\tau(S(L_1^t), x), \cdots, \tau(S(L_d^t), x)\}\ge
\tau(S(C_t),x)$. On the other hand,  there are $d$ branches $L_1^t, \cdots, L_d^t $ containing no $w_{t-1}$ in $T_{d+1}^*-v_t$ and there are $d$ branches $C_{t-1}, \cdots, C_{t-1}$ containing no $v_t$ in
     $T_{d+1}^*-w_{t-1}$. By Claim~(1), $\tau(S(L_{1}^t),x)<\tau(S(C_{t-1}),x).$ Hence by Corollary~\ref{exchange}, $\max\{S(L_{2}^t),x), \cdots, S(L_{d}^t),x)\}\le \tau(S(C_{t-1}),x).$ So Claim~2.1 holds.

 Let the maximum distance  between $v_t$ and any vertex in $L_1^t, \cdots, L_d^t$ is $l+1$.  Denote by
 $$V_j=\{u\ |\  dist(u, v_t)=l-j+1, u\in \bigcup_{i=1}^d V(L_i^t)\}, j=0, \cdots, l.$$

     {\bf Claim 2.2:} For any $u\in V_{l-j}$, there are $d$ the branches
     $L_1^u, \cdots, L_d^u$ containing no $v_t$ in $T_{d+1}^*-u$ such that
     \begin{equation}
     \tau(S(C_{t-j-1}), x)\le \tau(S(L_i^u), x) \le \tau(S(C_{t-j-2}), x)
     , \ \  i=1, \cdots, d,
     \end{equation} where $j=0, \cdots, \min\{t, l\}-2$.

We prove Claim~2.2 by the induction on $j$.  Let  $L_1^u, \cdots, L_d^u$  be $d$ the branches
    containing no $v_{t}$ in $T_{d+1}^*-u$ and  $T^u$ be the subtree  consisting of $u$ and $L_1^u, \cdots, L_d^u$. For $j=0$,
there exists a $1\le p\le d$ such that $T^u=L_p^t$.   If there exists an $1\le i\le d$ such that  $ \tau(S(L_i^u), x) < \tau(S(C_{t-1}), x)$, let    $R_1^{t-1}=C_{t-1}, \cdots, R_d^{t-1}$  be $d$ the branches
    containing no $v_t$ in $T_{d+1}^*-w_{t-1}$. Hence by Corollary~\ref{exchange},
    $\max\{  \tau(S(L_1^u), x), \cdots,  \tau(S(L_d^u), x)\}\le  \tau(S(C_{t-1}), x)$.
    Then
    $$ \tau(S(L_p^{t}), x)=\tau(S(T^u), x)=\frac{1}{1+\sum_{q=1}^d\frac{x}{1+x\tau(S(L_q^u), x)}}
    <\frac{1}{1+\sum_{q=1}^d\frac{x}{1+x\tau(S(C_{t-1}), x)}}= \tau(S(C_{t}), x),$$
     which contradicts Claim~2.1. Therefore,
     $$ \tau(S(C_{t-1}), x)\le \tau(S(L_i^u), x), i=1, \cdots, d.$$
     On the other hand, if there exists $1\le i\le d$ such that
     $ \tau(S(L_i^u), x) > \tau(S(C_{t-2}), x)$.   let    $R_1^{t-2}=C_{t-2}, \cdots, R_d^{t-2}$  be $d$ the branches
    containing no $v_t$ in $T_{d+1}^*-w_{t-2}$. By
    Corollary~\ref{exchange},
    $$\min\{  \tau(S(L_1^u), x), \cdots,  \tau(S(L_d^u), x)\}\le  \tau(S(C_{t-2}), x).$$
    Then
      $$ \tau(S(L_p^{t}), x)=\tau(S(T^u), x)=\frac{1}{1+\sum_{q=1}^d\frac{x}{1+x\tau(S(L_q^u), x)}}
    >\frac{1}{1+\sum_{q=1}^d\frac{x}{1+x\tau(S(C_{t-1}), x)}}= \tau(S(C_{t-1}), x),$$
     which contradicts Claim~2.1. Hence Claim~2.2 holds for $j=0$.
     Now assume that Claim~2.2 holds for $j$ and consider the claim for $j+1$.
     For any $u\in V_{l-(j+1)}$,  let
      $L_1^u, \cdots, L_d^u$  be $d$ the branches containing no $v_t$ in $T_{d+1}^*-u$  and  $T^u$ be the subtree  consisting of $u$ and $L_1^u, \cdots, L_d^u$.
Clearly there exists a  $u^{\prime}\in V_{l-j}$ such that there exists a branch $L_1^{u^{\prime}}$ in $T_{d+1}^*-u^{\prime}$   such that $T^u=L_1^{u^{\prime}}$.

     If there exists an $1\le i\le d$ such that  $ \tau(S(L_i^u), x) < \tau(S(C_{t-j-2}), x)$,    let $R_1^{t-j-2}=C_{t-j-2}, \cdots, R_d^{t-j-2}$  be $d$ the branches
    containing no $u$ in $T_{d+1}^*-w_{t-j-1}$. By Corollary~\ref{exchange},
    $$\max\{  \tau(S(L_1^u), x), \cdots,  \tau(S(L_d^u), x)\}\le  \tau(S(C_{t-j-2}), x).$$
          Then
    $$ \tau(S(L_1^{u^{\prime}}), x)=\tau(S(T^{u}), x)=\frac{1}{1+\sum_{q=1}^d\frac{x}{1+x\tau(S(L_q^u), x)}}
    <\frac{1}{1+\sum_{q=1}^d\frac{x}{1+x\tau(S(C_{t-j-2}), x)}}= \tau(S(C_{t-j-1}), x),$$
     which contradicts the induction hypothesis. Therefore, for any $u\in V_{l-j-1}$,
     $$ \tau(S(C_{t-j-2}), x)\le \tau(S(L_i^u), x), i=1, \cdots, d.$$
     On the other hand, if there exists $1\le i\le d$ such that
     $ \tau(S(L_i^u), x) > \tau(S(C_{t-j-3}), x)$.   Let    $R_1^{t-j-3}=C_{t-j-3}, \cdots, R_d^{t-j-3}$  be $d$ the branches
    containing no $u$ in $T_{d+1}^*-w_{t-j-2}$. By Corollary~\ref{exchange},
    $$\min\{  \tau(S(L_1^u), x), \cdots,  \tau(S(L_d^u), x)\}\ge  \tau(S(C_{t-j-3}), x).$$
    Then
      $$ \tau(S(L_1^{u^{\prime}}), x)=\tau(S(T^u), x)=\frac{1}{1+\sum_{q=1}^d\frac{x}{1+x\tau(S(L_q^u), x)}}
    >\frac{1}{1+\sum_{q=1}^d\frac{x}{1+x\tau(S(C_{t-j-3}), x)}}= \tau(S(C_{t-j-2}), x),$$
     which contradicts the induction hypothesis. Hence Claim~2.2 holds for $j+1$.
          Therefore Claim~2.2 holds.

   {\bf Claim~2.3:} $l=t-1$.

   If $l>t-1$, by Claim 2.2, for any $u\in V_{l-t+2},$
   $$  \tau(S(C_{1}), x) \le \tau(S(L_i^u),x), \ i=1, \cdots, d.$$
   On the other hand, there exists a $u^{\prime}\in V_{l-t+2}$ such that
   the largest distance between $u^{\prime}$ and the pendent vertex  is at least 2, then $C_2$ is a proper subgraph $L_1^{u^{\prime}}$, which implies   $\tau(S(L_1^{u^{\prime}}), x)\le  \tau(S(C_{2}), x)$. it is a contradiction.
   Hence $l\le t-1$.  Since $l\ge t-1$, then $l=t-1$.

    {\bf Claim 2.4:} For any $u\in V_{t-j-1}, ~j=0, \cdots, t-3$. Let $L_1^u, \cdots, \cdots, L_d^u$ be the $d$ branches containing no $v_t$ in $T_{d+1}^*-u$ and $T^u$ consist of $u$ and $d$ branches  $L_1^u, \cdots, \cdots, L_d^u$. Then
    $L_1^u=\cdots=L_d^u=C_{t-j-1}$ or $L_1^u=\cdots=L_d^u=C_{t-j-2}$, i.e.,  $T^u=C_{t-j}$ or $T^u=C_{t-j-1}$.

     We use induction for $t-j-1$. In fact, for $j=t-3$ and $u\in V_{2},$  by Claim 2.2, $\tau(S(C_{2}), x)\le \tau(S(L_i^u), x)\le \tau(S(C_{1}), x) $ for $ i=1, \cdots, d$. Hence
    $L_i^u=C_2$ or $L_i^u=C_1$ for $ i=1, \cdots, d$. If, say $L_1^u=C_2$ and $L_2^u=C_1$, then by  $\tau(S(L_1^2), x)>\tau(S(L_1^u),x)$ and Corollary~\ref{exchange},
    $\tau(S(L_1^2), x)\ge \max\{\tau(S(L_1^u), x), \cdots, \tau(S(L_d^u), x)\}\ge \tau(S(C_1), x)$, which is a contradiction. Hence $L_1^u=\cdots=L_d^u=C_{2}$ or
    $L_1^u=\cdots=L_d^u=C_{1}$, i.e.,  $T^u=C_{3}$ or $T^u=C_{2}$ for $u\in V_2$.
    Assume that Claim 2.4 hold for any vertex in $ V_{t-j-2}$. Now for $u\in   V_{t-j-1}$.
    Let $z_1, \cdots, z_d\in V_{t-j-1}$ be the roots of $L_1^u, \cdots, L_d^u$,
     respectively. By the induction hypothesis, $L_1^u, \cdots, L_d^u\in \{C_{t-j-1}, C_{t-j-2}\}$. Further $L_1^u=\cdots= L_d^u= C_{t-j-1}$ or $L_1^u=\cdots= L_d^u= C_{t-j-2}$. In fact, if, say  $L_1^u=C_{t-j-1}$ and $ L_2^u=C_{t-j-2}$, By
     $\tau(S(L_1^{t-j-1}),x)>\tau(S(C_{t-j-1}),x)$ and Corollary~\ref{exchange},
     $$\tau(S(L_1^{t-j-1}),x)\ge \max\{\tau(S(C_{t-j-1}),x), \tau(S(C_{t-j-2}),x)\ge \tau(S(C_{t-j-2}),x),$$
    which  contradiction to Claim 2.1.  Hence $L_1^u=\cdots= L_d^u= C_{t-j-1}$ or $L_1^u=\cdots= L_d^u= C_{t-j-2}$, i.e., $T^u=C_{t-j}$ or $T^u=C_{t-j-1}$.
So Claim 2.4 holds. Hence $L_i^t=C_t$ or $L_i=C_{t-1}$ for $i=2, \cdots, d$. In other words, Claim (2) holds.

Similarly, we can prove Claim (3) and omit the detail.
It is easy from Claim that $T_{d+1}^*$ is greedy tree. If the length of $P$ is odd, using similar way to prove this assertion. So we finish our proof.
\end{proof}

{\bf Lemma~\ref{case3}}
Let $S(T)$ be an optimal tree  in $\mathcal{S(T)}_{n, d+1}$.  If  degree of each vertex is 1 or $d+1$ in $V(T)$,  and  there are  $d$ pendente vertices or no pendent vertices in $N(u)$ for any $u\in V(T)$. Then $T$ is greedy tree $T_{d+1}^*$.

\begin{proof}
%If $T=C_l$ for some $l$, then the assertion holds. Assume that $T\neq C_l$.
 %Let $t_0$ be the smallest integer such that there exists a vertex $v$  and $T-u$ has $d+1$ branches $C_{t_0}, T_2, \cdots, T_d\neq C_{t_0}, T_{d+1}\neq C_{t_0}}$.

 Let $U=\{ u\ |\ deg (u)=1, u\in V(T)\}$ and $dist(v, U)=\min\{dist (v, u), u\in U\}$.
 $$U_i=\{v\ |\  dist(v, U)=i\},~i=1,2,... .$$
Clearly, for any $v\in U_{1},$ $T-v$ has $d$ branches $C_1, \cdots, C_1$.  If for any vertex $v\in U_{i},$ $T-v$ has $d$ branches $C_i, \cdots, C_i$ for $i=1, 2, \cdots, $ then $T$ is the greedy tree $T_{d+1}^*$, and we complete the proof. Next assume that there exists a vertex $v\in U_{i}$ such that $T-v$ has at least two branches different from $C_{i}$. Let $t_0$ be the smallest integer such that for any vertex $v\in U_{i},$ $T-v$ has $d$ branches $C_i, \cdots, C_i$ for $i=1, \cdots, t_0-1$ and for some vertex $v\in U_{t_0}$, $T-v$ has at least two branches different from $C_{t_0}$.  Let $P$ be the longest path through $v=v_{s}\in U_{t_0}$ (see Fig~2). Then by $v\in V_{t_0}$, we have $s\ge t_0+1$.  Assume that the length of $P$ is even.

Let $L_{1}^i,...,L_{d}^i$ be the branches of $T-v_i$ containing no $w_1$ and $L_{1}^i$ contains $v_1$ for $i=2, \cdots, k$. Let $R_{1}^i,...,R_{d}^i$ be the branches of $T-w_i$ containing no $v_1$, and $R_{1}^i$ contains $w_1$ for $2\le i\le k$.
By the definition of $v=v_{s}$, $C_{t_0}$ is a branch of $T-v_{s}$ and $ C_{t_0+1}$ is a  subtree of $L_j^{s}$ if $L_i^{s}\neq C_{t_0},~i=1,2,...,d$. Then  $\tau(S(L_j^{s}),x)\le \tau(S(C_{t_0+1}),x)$ for $L_i^{s}\neq C_{t_0}$, $i=1, \cdots. d$. Similarly, $C_{t_0}$ is a branch of $T-w_{t_0}$ and $ C_{t_0+1}$ is a  subtree of $R_j^{t_0}$ if $R_i^{t_0}\neq C_{t_0},~i=1,2,...,d$. Then  $\tau(S(R_j^{t_0}),x)\le \tau(S(C_{t_0+1}),x)$ for $R_i^{t_0}\neq C_{t_0}$, $i=1, \cdots. d$. If there is a branch of $R_1^{t_0},R_2^{t_0},\cdots,R_d^{t_0}$ is not $ C_{t_0}$, say $R_1^{t_0}\neq C_{t_0}$. By Corollary~\ref{exchange}, we have
\begin{eqnarray*}
\tau(S(R_1^{t_0}), x)&\ge& \min\{ \tau(S(R_1^{t_0}), x), \cdots,  \tau(S(R_d^{t_0}), x)\}  \\
&\ge&  \max\{  \tau(S(L_1^s), x), \cdots,  \tau(S(L_d^s), x)\}\ge \tau(S(C_{t_0}),x).
\end{eqnarray*}
It contradicts $\tau(S(R_1^{t_0}),x)\le \tau(S(C_{t_0+1}),x)$. Thus $R_1^{t_0}=\cdots=R_d^{t_0}=C_{t_0}$. Which will imply that $R_1^{t_0+1}=C_{t_0+1}$. Since $T-v_i$ has branches $L_{1}^s,...,L_{d}^s$ containing no $w_1$ and $T-w_{t_0+1}$ has branches $R_{1}^{t_0+1},...,R_{d}^{t_0+1}$ containing no $v_1$, by $\tau(S(C_{t_0+1}),x)<\tau(S(C_{t_0}),x)$ and Corollary~\ref{exchange}, we have
\begin{eqnarray*}
\tau(S(C_{t_0+1}),x)&\le& \max\{ \tau(S(R_1^{t_0+1}), x), \cdots,  \tau(S(R_d^{t_0+1}), x)\}  \\
&\le&  \min\{  \tau(S(L_1^s), x), \cdots,  \tau(S(L_d^s), x)\}\le \tau(S(C_{t_0+1}),x).
\end{eqnarray*}
Since $ C_{t_0+1}$ is a subtree of $L_i^{s}$ if $L_i^{s}\neq C_{t_0},~i=1,2,...,d$, then $L_i^{s}=C_{t_0+1}$ if $L_i^{s}\neq C_{t_0},~i=1,2,...,d$. This implies $s=t_0+1$. Without loss of generality, we can assume $L_1^{t_0+1}=C_{t_0+1}$ and $L_2^{t_0+1}=C_{t_0}$.

{\bf Claim : }
\begin{itemize} \setlength{\itemsep}{-\itemsep}
\item (1). $\tau(S(C_t),x)<\tau(S(L_{1}^t),x)< \tau(S(C_{t-1}),x),~ t=t_0+2, \cdots,  k+1$,

           $\tau(S(L_{i}^{t_0+1}),x)=\tau(S(C_{t_0+1}),x) ~\mbox{or}~\tau(S(C_{t_0}),x),~i=1, \cdots, d$.
%\item No vertex in $L_{t,d+1}$ has branches $C_{t-1},~K_t,~3\le t\le k+1$, where $K_t\neq C_{t-1}$;
\item  (2). $L_{i}^t= C_{t-1}$ or $~C_{t} ,~ t=t_0+1, \cdots,  k+1,~ i=2, \cdots, d$.
\item (3). $R_{1}^t=R_{2}^t=\cdots=R_{d}^t=C_t,~ t=t_0+1, \cdots, k$.
\end{itemize}

We prove Claim by the induction on $t$.
For $t=t_0+1$, by the above argument, we can find  Claim holds.
Assume that Claim holds for the number less than $t>t_0+1$ and we consider Claim for $t$.
 By the induction prothesis,
$$\tau(S(C_{t-1}),x)\le \tau(S(L_{1}^{t-1}),x)\le \tau(S(C_{t-2}),x)$$
and $\tau(S(C_{t-1}),x)\le \tau(S(L_{j}^{t-1}),x)\le \tau(S(C_{t-2}),x)$  for $j=2, \cdots, d$. It follows from   $(\ref{LM-tau})$ and (\ref{ch-ch-1}) that
\begin{eqnarray*}
\tau(S(C_{t}),x)&=&
\frac{1}{1+\sum_{i=1}^d\frac{x}{1+x\tau(S(C_{t-1}),x)}}\\
&<& \frac{1}{1+\sum_{i=1}^d\frac{x}{1+x\tau(S(L_{i}^{t-1}),x)}}\\
&=&\tau(S(L_{1}^t),x)\\
&<&\frac{1}{1+\sum_{i=1}^d\frac{x}{1+x\tau(S(C_{t-2}),x)}}.\\
&=&\tau(S(C_{t-1}),x)
\end{eqnarray*}
 Hence (1) holds for $t$.
 In order to prove (2) holds for $t$, we first prove the following several Claims.

 {\bf Claim~2.1:} $\tau(S(C_t), x)\le \tau(S(L_i^t), x)\le \tau(S(C_{t-1}, x)$ for $i=2, \cdots, d$.

  In fact, there are $d$ brances $L_1^t, \cdots, L_d^t $ containing no $w_t$ in $T_{d+1}^*-v_t$ and there are $d$ branches $R_1^t=C_t, R_2^t,\cdots, R_d^t$ containing no $v_t$ in
     $T_{d+1}^*-w_t$. By (1) of the Claim,  we have $\tau(S(C_t),x)<\tau(S(L_{1}^t),x)$. Hence by Corollary~\ref{exchange}, $\min\{\tau(S(L_1^t), x), \cdots, \tau(S(L_d^t), x)\}\ge
\tau(S(C_t),x)$. On the other hand,  there are $d$ branches $L_1^t, \cdots, L_d^t $ containing no $w_{t-1}$ in $T_{d+1}^*-v_t$ and there are $d$ branches $C_{t-1}, \cdots, C_{t-1}$ containing no $v_t$ in
     $T_{d+1}^*-w_{t-1}$. By (1) of the Claim, $\tau(S(L_{1}^t),x)<\tau(S(C_{t-1}),x).$ Hence by Corollary~\ref{exchange}, $\max\{S(L_{2}^t),x), \cdots, S(L_{d}^t),x)\}\le \tau(S(C_{t-1}),x).$ So Claim~2.1 holds.

 Let the maximum distance  between $v_t$ and any vertex in $L_1^t, \cdots, L_d^t$ is $l+1$. By the definition of $P$, we can find that $l\ge t-1$.  Denote by
 $$V_j=\{u\ |\  dist(u, v_t)=l-j+1, u\in \bigcup_{i=1}^d V(L_i^t)\}, j=0, \cdots, l.$$

     {\bf Claim 2.2:} For any $u\in V_{l-j}$, there are $d$ branches
     $L_1^u, \cdots, L_d^u$ containing no $v_t$ in $T_{d+1}^*-u$ such that
     \begin{equation}
     \tau(S(C_{t-j-1}), x)\le \tau(S(L_i^u), x) \le \tau(S(C_{t-j-2}), x)
     , \ \  i=1, \cdots, d,
     \end{equation} where $j=0, \cdots, \min\{t, l\}-2$.

     We prove Claim~2.2 by the induction on $j$.  Let  $L_1^u, \cdots, L_d^u$  be $d$ the branches
    containing no $v_{t}$ in $T_{d+1}^*-u$ and  $T^u$ be the subtree  consisting of $u$ and $L_1^u, \cdots, L_d^u$. For $j=0$,
there exists a $1\le p\le d$ such that $T^u=L_p^t$.   If there exists an $1\le i\le d$ such that  $ \tau(S(L_i^u), x) < \tau(S(C_{t-1}), x)$, let    $R_1^{t-1}=C_{t-1}, \cdots, R_d^{t-1}$  be $d$ the branches
    containing no $v_t$ in $T_{d+1}^*-w_{t-1}$. Hence by Corollary~\ref{exchange},
    $\max\{  \tau(S(L_1^u), x), \cdots,  \tau(S(L_d^u), x)\}\le  \tau(S(C_{t-1}), x)$.
    Then
    $$ \tau(S(L_p^{t}), x)=\tau(S(T^u), x)=\frac{1}{1+\sum_{q=1}^d\frac{x}{1+x\tau(S(L_q^u), x)}}
    <\frac{1}{1+\sum_{q=1}^d\frac{x}{1+x\tau(S(C_{t-1}), x)}}= \tau(S(C_{t}), x),$$
     which contradicts Claim~2.1. Therefore,
     $$ \tau(S(C_{t-1}), x)\le \tau(S(L_i^u), x), i=1, \cdots, d.$$
     On the other hand, if there exists $1\le i\le d$ such that
     $ \tau(S(L_i^u), x) > \tau(S(C_{t-2}), x)$.   let    $R_1^{t-2}=C_{t-2}, \cdots, R_d^{t-2}$  be $d$ the branches
    containing no $v_t$ in $T_{d+1}^*-w_{t-2}$. By
    Corollary~\ref{exchange},
    $$\min\{  \tau(S(L_1^u), x), \cdots,  \tau(S(L_d^u), x)\}\le  \tau(S(C_{t-2}), x).$$
    Then
      $$ \tau(S(L_p^{t}), x)=\tau(S(T^u), x)=\frac{1}{1+\sum_{q=1}^d\frac{x}{1+x\tau(S(L_q^u), x)}}
    >\frac{1}{1+\sum_{q=1}^d\frac{x}{1+x\tau(S(C_{t-1}), x)}}= \tau(S(C_{t-1}), x),$$
     which contradicts Claim~2.1. Hence Claim~2.2 holds for $j=0$.
     Now assume that Claim~2.2 holds for $j$ and consider the claim for $j+1$.
     For any $u\in V_{l-(j+1)}$,  let
      $L_1^u, \cdots, L_d^u$  be $d$ the branches containing no $v_t$ in $T_{d+1}^*-u$  and  $T^u$ be the subtree  consisting of $u$ and $L_1^u, \cdots, L_d^u$.
Clearly there exists a  $u^{\prime}\in V_{l-j}$ such that there exists a branch $L_1^{u^{\prime}}$ in $T_{d+1}^*-u^{\prime}$   such that $T^u=L_1^{u^{\prime}}$.

     If there exists an $1\le i\le d$ such that  $ \tau(S(L_i^u), x) < \tau(S(C_{t-j-2}), x)$,    let $R_1^{t-j-2}=C_{t-j-2}, \cdots, R_d^{t-j-2}$  be $d$ the branches
    containing no $u$ in $T_{d+1}^*-w_{t-j-1}$. By Corollary~\ref{exchange},
    $$\max\{  \tau(S(L_1^u), x), \cdots,  \tau(S(L_d^u), x)\}\le  \tau(S(C_{t-j-2}), x).$$
          Then
    $$ \tau(S(L_1^{u^{\prime}}), x)=\tau(S(T^{u}), x)=\frac{1}{1+\sum_{q=1}^d\frac{x}{1+x\tau(S(L_q^u), x)}}
    <\frac{1}{1+\sum_{q=1}^d\frac{x}{1+x\tau(S(C_{t-j-2}), x)}}= \tau(S(C_{t-j-1}), x),$$
     which contradicts the induction hypothesis. Therefore, for any $u\in V_{l-j-1}$,
     $$ \tau(S(C_{t-j-2}), x)\le \tau(S(L_i^u), x), i=1, \cdots, d.$$
     On the other hand, if there exists $1\le i\le d$ such that
     $ \tau(S(L_i^u), x) > \tau(S(C_{t-j-3}), x)$.   Let    $R_1^{t-j-3}=C_{t-j-3}, \cdots, R_d^{t-j-3}$  be $d$ the branches
    containing no $u$ in $T_{d+1}^*-w_{t-j-2}$. By Corollary~\ref{exchange},
    $$\min\{  \tau(S(L_1^u), x), \cdots,  \tau(S(L_d^u), x)\}\ge  \tau(S(C_{t-j-3}), x).$$
    Then
      $$ \tau(S(L_1^{u^{\prime}}), x)=\tau(S(T^u), x)=\frac{1}{1+\sum_{q=1}^d\frac{x}{1+x\tau(S(L_q^u), x)}}
    >\frac{1}{1+\sum_{q=1}^d\frac{x}{1+x\tau(S(C_{t-j-3}), x)}}= \tau(S(C_{t-j-2}), x),$$
     which contradicts the induction hypothesis. Hence Claim~2.2 holds for $j+1$.
          Therefore Claim~2.2 holds.

   {\bf Claim~2.3:} $l=t-1$.

   If $l>t-1$, by Claim 2.2, for any $u\in V_{l-t+2},$
   $$  \tau(S(C_{1}), x) \le \tau(S(L_i^u),x), \ i=1, \cdots, d.$$
   On the other hand, there exists a $u^{\prime}\in V_{l-t+2}$ such that
   the largest distance between $u^{\prime}$ and the pendent vertex  is at least 2, then $C_2$ is a proper subgraph $L_1^{u^{\prime}}$, which implies   $\tau(S(L_1^{u^{\prime}}), x)\le  \tau(S(C_{2}), x)$. it is a contradiction.
   Hence $l\le t-1$.  Since $l\ge t-1$, then $l=t-1$.

    {\bf Claim 2.4:} For any $u\in V_{t-j-1}, ~j=0, \cdots, t-3$. Let $L_1^u, \cdots, \cdots, L_d^u$ be the $d$ branches containing no $v_t$ in $T_{d+1}^*-u$ and $T^u$ consist of $u$ and $d$ branches  $L_1^u, \cdots, \cdots, L_d^u$. Then
    $L_1^u=\cdots=L_d^u=C_{t-j-1}$ or $L_1^u=\cdots=L_d^u=C_{t-j-2}$, i.e.,  $T^u=C_{t-j}$ or $T^u=C_{t-j-1}$.

     We use induction for $t-j-1$. In fact, for $j=t-3$ and $u\in V_{2},$  by Claim 2.2, $\tau(S(C_{2}), x)\le \tau(S(L_i^u), x)\le \tau(S(C_{1}), x) $ for $ i=1, \cdots, d$. Hence
    $L_i^u=C_2$ or $L_i^u=C_1$ for $ i=1, \cdots, d$. If, say $L_1^u=C_2$ and $L_2^u=C_1$, then by  $\tau(S(L_1^2), x)>\tau(S(L_1^u),x)$ and Corollary~\ref{exchange},
    $\tau(S(L_1^2), x)\ge \max\{\tau(S(L_1^u), x), \cdots, \tau(S(L_d^u), x)\}\ge \tau(S(C_1), x)$, which is a contradiction. Hence $L_1^u=\cdots=L_d^u=C_{2}$ or
    $L_1^u=\cdots=L_d^u=C_{1}$, i.e.,  $T^u=C_{3}$ or $T^u=C_{2}$ for $u\in V_2$.
    Assume that Claim 2.4 hold for any vertex in $ V_{t-j-2}$. Now for $u\in   V_{t-j-1}$.
    Let $z_1, \cdots, z_d\in V_{t-j-1}$ be the roots of $L_1^u, \cdots, L_d^u$,
     respectively. By the induction hypothesis, $L_1^u, \cdots, L_d^u\in \{C_{t-j-1}, C_{t-j-2}\}$. Further $L_1^u=\cdots= L_d^u= C_{t-j-1}$ or $L_1^u=\cdots= L_d^u= C_{t-j-2}$. In fact, if, say  $L_1^u=C_{t-j-1}$ and $ L_2^u=C_{t-j-2}$, By
     $\tau(S(L_1^{t-j-1}),x)>\tau(S(C_{t-j-1}),x)$ and Corollary~\ref{exchange},
     $$\tau(S(L_1^{t-j-1}),x)\ge \max\{\tau(S(C_{t-j-1}),x), \tau(S(C_{t-j-2}),x)\ge \tau(S(C_{t-j-2}),x),$$
    which  contradiction to Claim 2.1.  Hence $L_1^u=\cdots= L_d^u= C_{t-j-1}$ or $L_1^u=\cdots= L_d^u= C_{t-j-2}$, i.e., $T^u=C_{t-j}$ or $T^u=C_{t-j-1}$.
So Claim 2.4 holds.

Hence $L_i^t=C_t$ or $L_i^t=C_{t-1}$ for $i=2, \cdots, d$. In other words, (2) of Claim  holds.

Similarly, we can prove (3) of Claim, here we omit the detail.
It is easy from Claim that $T_{d+1}^*$ is greedy tree. If the length of $P$ is odd, using similar way to prove this assertion. So we finish our proof.
\end{proof}

\end{document}